\title{Team Semantics and Independence Notions in Quantum Physics}
\author{Samson Abramsky\thanks{Partially supported by  U.K. EPSRC Research Fellowship EP/V040944/1, Resources in Computation.}\\
Department of Computer Science,\\ University College London, England\and 
Joni Puljujärvi\thanks{Partially supported by the Academy of Finland, grant 322795. This project has received funding from the European Research Council (ERC) under the European Union’s Horizon 2020 research and innovation programme (grant agreement No 101020762).}\\ Department of Mathematics and Statistics,\\ University of Helsinki, Finland \and 
Jouko Väänänen\footnotemark[2]\\ Department of Mathematics and Statistics,\\ University of Helsinki, Finland}
\date{\today}
\newcommand{\vx}{{\vec{x}}}
\newcommand{\vy}{{\vec{y}}}
\newcommand{\vz}{{\vec{z}}}
\newcommand{\vu}{{\vec{u}}}
\newcommand{\vv}{{\vec{v}}}
\newcommand{\vw}{{\vec{w}}}
\newcommand{\va}{{\vec{a}}}
\newcommand{\vb}{{\vec{b}}}
\newcommand{\vc}{{\vec{c}}}
\newcommand{\ie}{\textit{i.e.}~}
\newcommand{\BB}{\mathbb{B}}
\newcommand{\system}{\mathcal{S}}
\begin{document}

\maketitle

\begin{abstract}
    We study dependence and independence concepts found in quantum physics, especially those related to hidden variables and non-locality, through the lens of team semantics and probabilistic team semantics, adapting a relational framework introduced in~\cite{MR3038040}. This also leads to new developments  in independence logic and probabilistic independence logic.
\end{abstract}

\section{Introduction}

The semantics of first-order logic is based on the inductively defined concept of an assignment satisfying a  formula in a given model. In a more general approach, called \emph{team semantics}, the basic concept is that of a \emph{set of assignments} satisfying a formula in a  model. This allows consideration of new atomic formulas such as ``$x$ is totally determined by $y_1,\ldots,y_n$'' and ``$x_1,\ldots,x_n$ are independent of $y_1,\ldots,y_m$''.
Such constraints on variables appear throughout sciences but in experimental sciences in particular. In this paper we apply team semantics to investigate determinism and independence concepts in quantum physics, following very closely~\cite{MR3038040}. In an independent development, R.~Albert and E.~Gr\"adel have in their paper~\cite{2021arXiv210210931A} come to many of the same conclusions.

The indeterministic and non-local nature of quantum mechanics, since its conception, has challenged the deterministic, local view of the world. To retain a more classical looking picture, several hidden-variable models for quantum mechanics---that would explain quantum  behaviour in terms of 
an underlying  local and deterministic theory---have been proposed since the 1920s. These models try to explain the predictions of quantum mechanics by adding unobservable hidden variables that play a role in determining the state of a quantum system. And indeed, if no constraints are posed on how the hidden variables can act---for instance, if the hidden variables are allowed to influence which measurements we make---then we can certainly come up with a hidden-variable explanation of anything. However, in order to form a reasonable and satisfactory theory, one needs to require that the hidden-variable models satisfy some combination of natural properties such as \emph{Bell locality}. A critical challenge for the hidden-variable program then emerged in the form of the famous no-go theorems by Bell and others~\cite{MR3790629, PhysRev.47.777, MR1081993, PhysRevLett.71.1665, MR0219280}: they showed that models satisfying what are generally regarded as reasonable assumptions could provably never account to the predictions of quantum mechanics.

\Samson introduced in~\cite{MR3038040} a relational framework for developing the key notions and results on hidden variables and non-locality, which can be seen as a relational variant of the probabilistic setting of~\cite{MR2443068}. He introduced what he called ``relational empirical models'' and used them to show that the basic results of the foundations of quantum mechanics, usually formulated in terms of probabilistic models, can be seen already on the level of mere (two-valued) relations. Our key observation is that we can think of the relational empirical models of~\cite{MR3038040} as \emph{teams} in the sense of team semantics. The basic quantum-theoretic properties of relational empirical models can then be defined in terms of the independence atoms of independence logic~\cite{MR3038039}. We show that the relationships between quantum-theoretic properties of relational models become instances of logical consequence of independence logic in its team semantics. In fact, the existential-positive-conjunctive fragment suffices. The no-go theorems become instances of failure of logical consequence between specific formulas of independence logic. This extends also to probabilistic models, with independence logic replaced by the probabilistic independence logic of~\cite{MR3802381}, capturing the probabilistic notions of~\cite{MR2443068}.

Logical consequence in independence logic is, in general, non-axiomatizable. Even on the level of atoms no finite axiomatization exists~\cite{MR662613}. This shows that the concept of logical consequence is here highly non-trivial and potentially quite complex. It should be emphasised that the logical consequences arising from the quantum-theoretic examples are purely logical, having a priori nothing to do with quantum mechanics, and hence they apply to any other field where independence plays a role, e.g. the theory of social choice or biology. On the other hand, \samson introduces in~\cite{MR3038040} a concept which in team semantics characterizes those teams which can arise from  quantum-mechanical experiments. Presumably the most subtle relationships between quantum-mechanical concepts are particular to such quantum-theoretic teams. We introduce to probabilistic independence logic, expanding on the example of~\cite{MR3038040}, the concept of being \emph{finite-dimensional tensor-product quantum-mechanical} and propose questions it gives rise to.

We think that translating~\cite{MR3038040} to the language and terminology of team semantics is interesting in itself from the point of view of team semantics. However, our paper goes beyond this. We use the language of independence logic and probabilistic independence logic to express hidden-variable properties of empirical models and probabilistic empirical models. This calls for some new developments in independence logic itself. For example,
we use the existential quantifier of independence logic to guess values of hidden variables, but since the values may be outside the current domain, we introduce to independence logic the existential quantifier of sort logic~\cite{MR3205075}, which  allows the extension of the domain by new sorts. 

Relations between hidden-variable properties can be seen as logical consequences in independence logic. In some cases these logical consequences are provable from the axioms.
We use probabilistic independence logic to express probabilistic hidden-variable properties and their mutual relationships. We prove the probabilistic validity of axioms and rules of independence logic, so the relationships of probabilistic hidden-variable models that follow from the axioms of independence logic hold also probabilistically.
We introduce an operator $\PR\varphi$ which holds in a team if and only if the team is the possibilistic collapse of a probabilistic team satisfying $\varphi$. Adopting the concept of a quantum realizable team from~\cite{MR3038040} we introduce the operator $\QR\varphi$ which holds in a team if and only if the team is the possibilistic collapse of a probabilistic  team that satisfies $\varphi$ and whose probability distribution arises from a finite-dimensional quantum system. We take the first step towards developing independence logic with the operators $\PR$ and $\QR$.

This paper is part of a program to find general principles that govern the uses of dependence and independence concepts in science and humanities.

We are grateful to Philip Dawid, Miika Hannula, Åsa Hirvonen, Martti Karvonen, and Juha Kontinen, among others, for useful discussions and remarks.

\section{Dependence and Independence Logic}\label{dail}

The basic concept of the semantics of first-order logic is that of an \emph{assignment}, \ie an assignment of values in the universe of a structure to a set of variables. This allows meaning to be assigned to  formulas with free variables, and hence enables a compositional definition of the semantics of formulas, with the truth conditions for sentences as a special case.
The concept of a \emph{team}, i.e. a set of assignments,  was introduced in~\cite{MR2351449} to make sense of the dependence atom $\=(\vec{x},\vec{y})$, ``$\vec{x}$ totally determines $\vec{y}\,$''. 
The meaning of the dependence atom $\=(\vec{x},\vec{y})$ of~\cite{MR2351449} in a team $X$ is
\[
    \forall s,s'\in X(s(\vec{x})=s'(\vec{x}) \implies s(\vec{y})=s'(\vec{y})).
\]
Our starting point in this paper is the observation that teams arise naturally in describing the kinds of situations which are the subject of Bell-type non-locality theorems.
We shall consider systems which have $n$ parties. If $n = 2$, we have bipartite systems. The parties are typically referred to as Alice, Bob, etc.
The physical idea behind this is that the parties are spacelike separated; hence, for the physical events under consideration, under relativistic constraints there is no possibility for information to pass between the parties.
We now consider the scenario where each party performs a measurement. Each such measurement has an \emph{input} (often referred to as the measurement \emph{setting}), and an \emph{output} (often referred to as the measurement \emph{outcome}). 
The input could be turning a knob to a certain position, choosing the angle of a magnetic field, etc. The output of the measurement could be ``true'' or ``false'' corresponding to the presence or absence of a click in a detector, a reading of a gauge, etc.

Let us consider as an example the famous Stern--Gerlach experiment~\cite{Gerlach1922} which was one of the early experiments manifesting quantization, here quantization of angular momentum. In this experiment a beam of silver atoms is directed through a sequence of magnets towards a detector screen. Although the silver atoms are not electrically charged, quantum theory, unlike classical physics, predicts that the atoms are deflected by the magnets. In this experiment the orientations of the magnets are what we call the measurements. The coordinates of the points of collision of the atoms with the detector screen are what we call the outcomes. As it happened, the experiment showed in 1922 clearly that the coordinates manifest quantization of the deflection angle.

A single  event can be represented using a variable $x_i$ for each input and a variable $y_i$ for each output. Such a single-shot event is then  represented by assignment of measurement settings to the inputs, and outcomes to the outputs. This is just an assignment to the set of variables $\{ x_0, \ldots , x_{n-1}, y_0, \ldots , y_{n-1}\}$.
We are interested in \emph{ensembles} of such events, which allow non-deterministic and probabilistic variation in the outcomes of given measurements to be captured.
Operationally, such ensembles can be generated by repeatedly performing multipartite measurements, and recording the outcomes.
On the quantitative level, this will generate statistics, which can be represented by probability distributions on these events. We will look at this quantitative level later in the paper, but for now, we focus on qualitative information at the possibilistic level: do certain outcomes for given measurements ever arise? This information can be represented by the set of possible assignments, which will have the following form:

\begin{center}
    $X=$
    \begin{tabular}{|ccccc|}
        $x_0\phantom{{}^{-1}}$    & $y_0\phantom{{}^{-1}}$    & $\ldots$ & $x_{n-1}$       & $y_{n-1}$       \\
        \hline
        $a^0_0\phantom{{}^{-1}}$  & $b^0_0\phantom{{}^{-1}}$  & $\ldots$ & $a^0_{n-1}$     & $b^0_{n-1}$     \\
        $a^1_0\phantom{{}^{-1}}$  & $b^1_0\phantom{{}^{-1}}$  & $\ldots$ & $a^1_{n-1}$     & $b^1_{n-1}$     \\
        $\vdots\phantom{{}^{-1}}$ & $\vdots\phantom{{}^{-1}}$ & $\ddots$ & $\vdots$        & $\vdots$        \\
        $a^{m-1}_0$               & $b^{m-1}_0$               & $\ldots$ & $a^{m-1}_{n-1}$ & $b^{m-1}_{n-1}$ \\
    \end{tabular}\,.
\end{center}

We can think of $X$ as a \emph{team} (in the sense of team semantics) consisting of assignments of values to the variables $x_0,\dots,x_{n-1},y_0,\dots,y_{n-1}$. Even though the data in its intended interpretation has a clear structure dividing the elements of the table into ``inputs'' and ``outputs'', we can also look at the table as a mere database of data irrespective of how it was created. We can ask what kind of dependences this table of data---team---manifests.

Thus we can say that the team of data $X$  supports \textbf{strong determinism} if it satisfies
\[
    \=(x_i,y_i)
\]
for all $i<n$. Intuitively, in each such experiment the input for the $i$'th party completely determines the outcome for that party, that is, the $i$'th outcome does not, in the light of $X$, depend on anything other than the $i$'th input. This is a very strong constraint, which limits the applicability of this concept. 

We say that the team $X$ supports \textbf{weak determinism} if it satisfies
\[
    \=(x_0,\ldots,x_{n-1},y_i)
\]
for all $i<n$. Intuitively, that says that the whole set of inputs to the system collectively completely determine each outcome, that is, the outcome does not, in the light of $X$, depend on anything else than the inputs of the system. In systems arising from scientific experiments this means that the system has enough ``variables'' to determine its outcome. 

Consider the Stern--Gerlach experiment. Even if the magnets are directed in the same way, the particles that pass through the magnetic field manifest a (quantized) spectrum of results, rather than a single spot on the receiving screen. In keeping with a fundamental tenet of quantum physics, all phenomena such as tested by the Stern--Gerlach experiment give only probabilistic results. An individual team may not reveal this, but the bigger the team, the more likely it is to fail to support even weak determinism.

There are important aspects of experimental data that cannot be expressed in terms of the dependence atom only. We therefore move on to a stronger concept, one that supersedes dependence and allows to express also independence.

In independence logic~\cite{MR3038039}, we add a new atomic formula
\[
    \vec{y}\perp_{\vec{x}}\vec{z}
\]
to first-order logic. Intuitively this formula says that keeping $\vec{x}$ fixed, $\vec{y}$ and $\vec{z}$ are independent of each other. A team $X$ is defined to satisfy $\vec{y}\perp_{\vec{x}}\vec{z}$ if
\begin{gather*}
    \forall s,s'\in X[s(\vec{x})=s'(\vec{x})\implies \\
    \exists s''\in X(s''(\vec{x})=s(\vec{x})\wedge s''(\vec{y})=s(\vec{y})\wedge s''(\vec{z})=s'(\vec{z})].
\end{gather*}
We may observe that, unlike $\=(x_0,\dots,x_{n-1},y_i)$, this is not closed downwards\footnote{A formula $\varphi$ is closed downwards if, whenever $X\models\varphi$ and $Y\subseteq X$, we have $Y\models\varphi$.}, but it is closed under unions of increasing chains. Note that this condition is first order, as was the case for the semantics of the dependence atom. Thus independence logic is  $\Sigma^1_1$ in its expressive power, and hence NP.
Here is an example of a team satisfying $y_0 \perp_{x_0x_1} y_1$:
\begin{center}
    $X=$
    \begin{tabular}{|cccc|}
        $x_0$ & $y_0$ & $x_1$ & $y_1$ \\
        \hline
        $0$   & $1$   & $0$   & $1$   \\
        $0$   & $1$   & $1$   & $2$   \\
        $0$   & $1$   & $1$   & $7$   \\
        $0$   & $5$   & $1$   & $2$   \\
        $0$   & $5$   & $1$   & $7$   \\
        $1$   & $5$   & $1$   & $1$   \\
    \end{tabular}
\end{center}

For fixed $x_0$ and $x_1$, e.g. $x_0=0, x_1=1$, the values of $y_0$ and $y_1$ are independent of each other in the strong sense that if a value of $y_0$ occurs in combination with \emph{any} value of $y_1$, e.g. 2, it occurs also with any other value of $y_1$, e.g. 7. Intuitively this says that in these experiments the individual experiments do not interfere with each other. 
It is like measuring commuting quantum observables.

Note that the dependence atom can be defined in terms of the independence atom:
\[
    \=(\vec{x},\vec{y}) \equiv \vec{y} \perp_{\vec{x}} \vec{y}.
\]
We will thus use $\=(\vec{x},\vec{y})$ as a shorthand for $\vec{y} \perp_{\vec{x}} \vec{y}$ when dealing with independence logic.

\subsection{Syntax and Semantics of Independence Logic}\label{Subsection: Syntax and Semantics of Independence Logic}

To rigorously define the semantics of independence logic---an extension of first-order logic by the independence atom---we need to be more precise about our definitions. For the sake of some technical details later on, we consider team semantics in the context of many-sorted structures (see e.g.~\cite{sep-logic-many-sorted}).

\begin{definition}
    A (many-sorted relational) \emph{vocabulary} $\tau$ is a tuple $(\sorts_\tau,\rel_\tau,\arity_\tau,\sort_\tau)$ such that
    \begin{enumerate}
        \item $\rel_\tau$ is a set of relation symbols\footnote{We do not consider function symbols, as we do not need them later. They can be handled similarly to relation symbols.} and $\sorts_\tau\subseteq\N$,
        \item $\arity_\tau\colon\rel_\tau\to\N$ and $\sort_\tau\colon\rel_\tau\to\N^{<\omega}$ are functions with $\sort_\tau(R)\in\sorts_\tau^{\arity_\tau(R)}$ for $R\in\rel_\tau$, and
        \item if $n_i\in\N$, $i<k$, are such that $\sort_\tau(R)=(n_0,\dots,n_{k-1})$ for some $R\in \rel_\tau$, then $n_0,\dots,n_{k-1}\in\sorts_\tau$.
    \end{enumerate}
     We call $\arity_\tau(R)$ the arity of $R$ and $\sort_\tau(R)$ the sort of $R$.
    For $n\notin\sorts_\tau$, we say that a vocabulary $\tau'$ is the expansion of $\tau$ by the sort $n$ if
    \[
        \tau'=(\sorts_\tau\cup\{n\},\rel_\tau,\sort_\tau,\arity_\tau).
    \]

    A (many-sorted) \emph{$\tau$-structure} is a function $\A$ defined on the set $\rel_\tau\cup\sorts_\tau$ such that
    \begin{enumerate}
        \item $\A(n)$ is a nonempty set $A_n$ for $n\in\sorts_\tau$ and called the sort $n$ domain of $\A$, and
        \item $\A(R)\subseteq A_{n_0}\times\dots\times A_{n_{k-1}}$ for $R\in\rel_\tau$, where $\sort_\tau(R)=(n_0,\dots,n_{k-1})$.
    \end{enumerate}
    If $\tau'$ is an expansion of $\tau$ by sort $n$, we call a $\tau'$-structure $\B$ an \emph{expansion of $\A$ by the sort $n$} when $\B\restriction(\rel_\tau\cup\sorts_\tau)=\A$.
\end{definition}

We usually denote $\A(R)$ simply by $R^\A$ and $\A(n)$ by $A_n$. If $\A$ only has one sort, then we denote the domain of that sort by $A$ and call it the domain of $\A$. When there is no risk for confusion, we write $\arity$ and $\sort$ for $\arity_\tau$ and $\sort_\tau$.

For each sort $n\in\N$, we designate a set $\{v_i^n \mid i\in\N\}$ of variables of sort $n$, although for simplicity of notation we usually use symbols like $x$, $y$ and $z$ for variables and indicate the sort by writing $\sort(x)$ for the sort of $x$.

\begin{definition}[Syntax of Independence Logic]
    \label{Definition: Syntax of independence logic}
    The set of \emph{$\tau$-formulas} of independence logic is defined as follows.
    \begin{enumerate}
        \item First-order atomic and negated atomic formulas $u=v$, $\neg u=v$, $R(\vec{x})$ and $\neg R(\vec{x})$, where $R\in\rel_\tau$, $\vec{x}=(x_0,\dots,x_{\arity(R)-1})$ and $v$, $u$ and $x_i$ are variables with $\sort(u),\sort(v),\sort(x_i)\in\sorts_\tau$, $\sort(u)=\sort(v)$\footnote{We introduce identity for variables of the same sort only. This is not necessary but is sufficient here.} and $\sort(R)=(\sort(x_0),\dots,\sort(x_{\arity(R)-1}))$, are $\tau$-formulas.

        \item Independence atoms $\vec{y}\perp_{\vec{x}}\vec{z}$, where $\vec{x}=(x_0,\dots,x_{n-1})$, $\vec{y}=(y_0,\dots,y_{m-1})$ and $\vec{z}=(z_0,\dots,z_{l-1})$, and $x_i$, $y_j$ and $z_k$ are variables with $\sort(x_i),\sort(y_j),\sort(z_k)\in\sorts_\tau$, are $\tau$-formulas.

        \item If $\varphi$ and $\psi$ are $\tau$-formulas, then so are $\varphi\land\psi$ and $\varphi\lor\psi$.

        \item If $\varphi$ is a $\tau$-formula and $v$ is a variable with $\sort(v)\in\sorts_\tau$, then also $\forall v\varphi$ and $\exists v\varphi$ are $\tau$-formulas.
    \end{enumerate}
    We call \emph{dependence logic} the fragment of independence logic where only independence atoms of the form $\vec{y}\perp_{\vec{x}}\vec{y}$ are allowed.
    
    In addition to the usual syntax of independence logic, we introduce new quantifiers $\tilde{\forall}$ and $\tilde{\exists}$ which we will interpret as \emph{new sort quantifiers}. Similar quantifiers---although second order---were introduced by \jouko in~\cite{MR3205075}.
    
    \begin{enumerate}[resume]
        \item If $v$ is a variable such that $\sort(v)\notin\sorts_\tau$, $\tau'$ is the expansion of $\tau$ by the sort $\sort(v)$ and $\varphi$ is a $\tau'$-formula such that no variable, other than $v$, of sort $\sort(v)$ occurs free in $\varphi$, then $\tilde{\forall}v\varphi$ and $\tilde{\exists}v\varphi$ are $\tau$-formulas.
    \end{enumerate}
\end{definition}

The underlying idea of the new sort quantifiers $\tilde{\exists}$ and $\tilde{\forall}$ will become apparent in Definition~\ref{Definition: Semantics of independence logic} below.

\begin{definition}
    Let $\A$ be a $\tau$-structure and $D$ a set of variables. An \emph{assignment} $s$ of $\A$ with domain $D$ is a function $D\to\bigcup_{n\in\sorts_\tau}A_n$ such that $s(v)\in A_{\sort(v)}$ for all $v\in D$. If $s$ is an assignment of $\A$ with domain $D$, we write $s\colon D\to\A$. A \emph{team} $X$ of $\A$ with domain $D$ is a set of assignments of $\A$ with domain $D$. We denote by $\dom(X)$ the set $D$ and by $\rng(X)$ the set $\{s(v) \mid v\in D, s\in X\}$. If $X$ contains every assignment of $\A$, we call $X$ the full team of $\A$.
\end{definition}

For an assignment $s\colon D\to\A$, a variable $v$ (not necessarily in $D$) and $a\in A_{\sort(v)}$, we denote by $s(a/v)$ the assignment $D\cup\{v\}\to\A$ that maps $v$ to $a$ and $w$ to $s(w)$ for $w\in D\setminus\{v\}$. If $\vec{x}=(x_0,\dots,x_{n-1})$ is a tuple of variables, we denote by $s(\vec{x})$ the tuple $(s(x_0),\dots,s(x_{n-1}))$.

Given a team $X$ of $\A$, a variable $v$ and a function $F\colon X\to\Pow(A_{\sort(v)})\setminus\{\emptyset\}$, we denote by $X[F/v]$ the (``supplemented'') team $\{s(a/v) \mid s\in X, a\in F(s)\}$ and by $X[A_{\sort(v)}/v]$ the (``duplicated'') team $\{s(a/v) \mid s\in X, a\in A_{\sort(v)}\}$.

\begin{definition}[Semantics of Independence Logic]
    \label{Definition: Semantics of independence logic}
    Let $\tau$ be a (possibly many-sorted) vocabulary, $\A$ a $\tau$-structure, $X$ a team of $\A$ and $\varphi$ a $\tau$-formula. We then define the concept of the team $X$ satisfying the formula $\varphi$ in the structure $\A$, in symbols $\A\models_X\varphi$, as follows.\footnote{Strictly speaking, this definition is given in set theory for each quantifier rank separately. A uniform definition is possible if we put an upper bound on the cardinality of models considered.}
    \begin{enumerate}
        \item If $\varphi$ is a first-order atomic or negated atomic formula, then $\A\models_X\varphi$ if every assignment $s\in X$ satisfies $\varphi$ in $\A$ in the usual sense.
        \item If $\varphi = \vec{y}\perp_{\vec{x}}\vec{z}$, then $\A\models_X\varphi$ if for any $s,s'\in X$ with $s(\vec{x})=s'(\vec{x})$ there exists $s''\in X$ with $s''(\vec{x}\vec{y})=s(\vec{x}\vec{y})$ and $s''(\vec{z})=s'(\vec{z})$.
        \item If $\varphi=\psi\land\theta$, then $\A\models_X\varphi$ if $\A\models_X\psi$ and $\A\models_X\theta$.
        \item If $\varphi=\psi\lor\theta$, then $\A\models_X\varphi$ if $\A\models_Y\psi$ and $\A\models_Z\theta$ for some teams $Y$ and $Z$ such that $Y\cup Z = X$.
        \item If $\varphi=\forall v\psi$, then $\A\models_X\varphi$ if $\A\models_{X[A_{\sort(v)}/v]}\psi$.
        \item If $\varphi=\exists v\psi$, then $\A\models_X\varphi$ if $\A\models_{X[F/v]}\psi$ for some function $F\colon X\to\Pow(A_{\sort(v)})\setminus\{\emptyset\}$.
        \item If $\varphi=\tilde{\forall}v\psi$, then $\A\models_X\varphi$ if $\B\models_X\forall v\psi$ for all expansions $\B$ of $\A$ by the sort ${\sort(v)}$.
        \item If $\varphi=\tilde{\exists}v\psi$, then $\A\models_X\varphi$ if $\B\models_X\exists v\psi$ for some expansion $\B$ of $\A$ by the sort ${\sort(v)}$.
    \end{enumerate}
\end{definition}

If we restrict our attention to vocabularies and structures with just one sort, we get exactly the ordinary team semantics of independence logic.

When the underlying structure $\A$ is clear from the context or is irrelevant to the discussion (e.g. when the formula $\varphi$ does not contain any non-logical symbols or variables of multiple sorts), we simply write $X\models\varphi$ instead of $\A\models_X\varphi$.

\subsection{Axioms of Independence Logic}\label{subsec:axioms-of-independence-logic}

Although logical consequence in team semantics cannot be completely axiomatized (see the beginning of Section~\ref{subsec:relationships-between-the-properties}), it makes sense to isolate axioms that suffice for proving as many of the interesting logical consequences as possible. The rules we present here are, of course, then not intended to be complete in any sense; rather, they are just what we need in this paper. The general question of a more complete set of rules and axioms remains open.

\begin{definition}[Axioms of the Independence Atom,~\cite{MR3038039, MR3329287}]
    \label{axioms}
    The \textbf{axioms} of the independence atom are:
    \begin{enumerate}
        \item $\vec{y}\perp_{\vec{x}}\vec{y}$ entails $\vec{y}\perp_{\vec{x}}\vec{z}$. (Constancy Rule)
        \item $\vec{x}\perp_{\vec{x}}\vec{y}$. (Reflexivity Rule)
        \item   $\vec{z}\perp_{\vec{x}}\vec{y}$ entails $\vec{y}\perp_{\vec{x}}\vec{z}$. (Symmetry Rule)
        \item $\vec{y}{y'}\perp_{\vec{x}}\vec{z}{z'}$ entails $\vec{y}\perp_{\vec{x}}\vec{z}$. (Weakening Rule)
        \item If $\vec{z'}$ is a permutation of $\vec{z}$, $\vec{x'}$ is a permutation of $\vec{x}$, $\vec{y'}$ is a permutation of $\vec{y}$, then $\vec{y}\perp_{\vec{x}}\vec{z}$ entails $\vec{y'}\perp_{\vec{x'}}\vec{z'}$. (Permutation Rule)
        \item $\vec{z}\perp_{\vec{x}}\vec{y}$ entails $\vec{y}\vec{x}\perp_{\vec{x}}\vec{z}\vec{x}$. (Fixed Parameter Rule)
        \item $\vec{x}\perp_{\vec{z}}\vec{y}\wedge\vec{u}\perp_{\vec{z}\vec{x}}\vec{y}$ entails $\vec{u}\perp_{\vec{z}}\vec{y}$. (First Transitivity Rule)
        \item $\vec{y}\perp_{\vec{z}}\vec{y}\wedge\vec{z}\vec{x}\perp_{\vec{y}}\vec{u}$ entails $\vec{x}\perp_{\vec{z}}\vec{u}$. (Second Transitivity Rule)
        \item $\vec{x}\perp_{\vec{z}}\vec{y}\land\vec{x}\vec{y}\perp_{\vec{z}}\vec{u}$ entails $\vec{x}\perp_{\vec{z}}\vec{y}\vec{u}$. (Exchange Rule)
    \end{enumerate}
\end{definition}

The so-called Armstrong's Axioms for the dependence atom~\cite{MR0421121} follow from the above axioms:

\begin{definition}[Axioms of Dependence Atom]
    The \textbf{axioms} of the dependence atom are:
    \begin{enumerate}
        \item $\=(\vx\vy,\vx)$. (Reflexivity)
        \item $\=(\vx,\vy)\land\=(\vy,\vz)$ entails $\=(\vx,\vz)$. (Transitivity)
        \item $\=(\vx,\vy)$ entails $\=(\vx,\vx\vy)$. (Extensivity)
        \item If $\vec{x'}$ and $\vec{y'}$ are permutations of $\vx$ and $\vy$, respectively, then $\=(\vx,\vy)$ entails $\=(\vec{x'},\vec{y'})$. (Permutation)
    \end{enumerate}
\end{definition}
Armstrong's axioms are complete for dependence atoms, i.e. if $\Sigma$ is a set of dependence atoms and $\varphi$ is another dependence atom, then $\Sigma\models\varphi$ if and only if $\Sigma$ entails $\varphi$ by repeated applications of Armstrong's axioms.

The notion of a graphoid was introduced in~\cite{pearl1986graphoids} after the observation that certain axioms that hold true for conditional independence in probability theory are also satisfied by the vertex separation relation in a(n undirected) graph. A semigraphoid is a weakening of a graphoid, excluding one axiom. The axioms as we present them can be found in~\cite{pearl1988probabilistic}.

\begin{definition}
    The following are the semigraphoid \textbf{axioms}:
    \begin{enumerate}[label=(S\arabic*)]
        \item $\vec x\perp_{\vec z}\emptyset$. (Triviality)
        \item $\vec{x}\perp_{\vec{z}}\vec{y}$ entails $\vec{y}\perp_{\vec{z}}\vec{x}$. (Symmetry)
        \item $\vec x\perp_{\vec z}\vec y\vec w$ entails $\vec x\perp_{\vec z}\vec y$. (Decomposition)
        \item $\vec x\perp_{\vec z}\vec y\vec w$ entails $\vec x\perp_{\vec z\vec w}\vec y$. (Weak Union)
        \item $\vec{x}\perp_{\vec{z}}\vec{y} \land \vec{x}\perp_{\vec{z}\vec y}\vec{w}$ entails $\vec{x}\perp_{\vec{z}}\vec{y}\vec w$. (Contraction)
    \end{enumerate}
    In the original definition of a (semi)graphoid, $\vec x$, $\vec y$, $\vec z$ and $\vec w$ are sets instead of tuples, so we add the following (trivially valid) axioms to accommodate that:
    \begin{enumerate}[resume]
        \item If $\vec{x'}$, $\vec{y'}$ and $\vec{z'}$ are permutations of $\vec x$, $\vec y$ and $\vec{z}$, respectively, then $\vec{x}\perp_{\vec{z}}\vec{y}$ entails $\vec{x'}\perp_{\vec{z'}}\vec{y'}$. (Permutation)
        \item $\vec{x}\perp_{\vec{z}}\vec{y}$ entails $\vec{x}\vec x\perp_{\vec{z}\vec z}\vec{y}\vec y$. (Repetition)
    \end{enumerate}
\end{definition}

It is straightforward to show that the semigraphoid axioms are sound in team semantics (a fact which also follows from Proposition~\ref{Proposition (Hannula): Semigraphoid axioms hold very generally}). Next we show that the axioms of independence atom follow from the semigraphoid axioms, save the Reflexivity Rule. It remains open whether the Reflexivity Rule also follows from the semigraphoid axioms.

\begin{proposition}\label{Proposition: Axioms of independence atom follow from semigraphoid axioms}
    The axioms of the independence atom are provable from the semigraphoid axioms + the Reflexivity Rule.
\end{proposition}
\begin{proof}
    \begin{enumerate}
        \item Constancy Rule: Reflexivity rule gives $\vec x\vec y\perp_{\vec x\vec y}\vec z$. By a combination of symmetry, permutation and decomposition, from this we get $\vec y\perp_{\vec x\vec y}\vec z$. From $\vec y\perp_{\vec x}\vec y \land \vec y\perp_{\vec x\vec y}\vec z$ contraction gives $\vec y\perp_{\vec x}\vec y\vec z$, whence by decomposition we get $\vec y\perp_{\vec x}\vec z$.

        \item Reflexivity Rule: is an assumption.

        \item Symmetry Rule: This is the same as the symmetry axiom of semigraphoids.

        \item Weakening Rule: By the decomposition axiom, $\vec{y}{y'}\perp_{\vec{x}}\vec{z}{z'}$ entails $\vec{y}{y'}\perp_{\vec{x}}\vec{z}$, which by symmetry entails $\vec{z}\perp_{\vec{x}}\vec{y}{y'}$, which by decomposition entails $\vec{z}\perp_{\vec{x}}\vec{y}$, which by symmetry gives $\vec{y}\perp_{\vec{x}}\vec{z}$.

        \item Permutation Rule: This is the same as permutation of semigraphoids.

        \item Fixed Parameter Rule: From the Reflexivity Rule and symmetry, we get $\vec y\perp_{\vec x}\vec x$. From $\vec y\perp_{\vec x}\vec z$ we get $\vec y\perp_{\vec x\vec x}\vec z$ by repetition. From $\vec y\perp_{\vec x}\vec x \land \vec y\perp_{\vec x\vec x}\vec z$ contraction gives $\vec y\perp_{\vec x}\vec x\vec z$. By symmetry and repetition, we have $\vec x\vec z\perp_{\vec x\vec x}\vec y$, and again reflexivity + symmetry gives $\vec x\vec z\perp_{\vec x}\vec x$. From $\vec x\vec z\perp_{\vec x}\vec x \land \vec x\vec z\perp_{\vec x\vec x}\vec y$ contraction again gives $\vec x\vec z\perp_{\vec x}\vec x\vec y$. By symmetry + permutation this yields $\vec y\vec x\perp_{\vec x}\vec z\vec x$ as desired.

        \item First Transitivity Rule: By symmetry, from $\vec{x}\perp_{\vec{z}}\vec{y}$ we get $\vec{y}\perp_{\vec{z}}\vec{x}$ and from $\vec{u}\perp_{\vec{z}\vec{x}}\vec{y}$ we get $\vec{y}\perp_{\vec{z}\vec{x}}\vec{u}$. Applying contraction to $\vec{y}\perp_{\vec{z}}\vec{x}$ and $\vec{y}\perp_{\vec{z}\vec{x}}\vec{u}$, we get $\vec{y}\perp_{\vec{z}}\vec{x}\vec{u}$, from which the weakening rule that we already proved gives $\vec{y}\perp_{\vec{z}}\vec{u}$. Then by symmetry we get $\vec{u}\perp_{\vec{z}}\vec{y}$.

        \item Second Transitivity Rule: From $\vec{z}\vec{x}\perp_{\vec{y}}\vec{u}$, symmetry, permutation and weak union give $\vec{x}\perp_{\vec{z}\vec{y}}\vec{u}$. From $\vec y\perp_{\vec z}\vec y$ constancy rule + symmetry gives $\vec x\perp_{\vec z}\vec y$. Then contraction gives $\vec x\perp_{\vec z}\vec y\vec u$, whence by decomposition we obtain $\vec x\perp_{\vec z}\vec u$.

        \item Exchange Rule: From $\vec{x}\vec{y}\perp_{\vec{z}}\vec{u}$, symmetry + weak union gives $\vec{x}\perp_{\vec{z}\vec{y}}\vec{u}$. Then from $\vec{x}\perp_{\vec{z}}\vec{y}$ and $\vec{x}\perp_{\vec{z}\vec{y}}\vec{u}$, contraction gives $\vec{x}\perp_{\vec{z}}\vec{y}\vec{u}$.
    \end{enumerate}
\end{proof}

So it turns out that the semigraphoid axioms, with the Reflexivity Rule added, are sufficient to prove all the others. This will be useful in Section~\ref{sec:probabilistic-teams}. However, we will use all of the above axioms in the sequel.

Next we add rules for conjunction and existential quantifier, as we shall be working mainly with an existential-conjunctive fragment of independence logic.

\begin{definition}[Quantifiers and Connectives,~\cite{MR3079984,MR3270139}]
    \label{Definition: Rules for logical operations}\quad
    \begin{enumerate}
        \item The following is the elimination rule for existential quantifier:
        \begin{quote}
            If $\Sigma$ is a set of formulas, $\Sigma\cup\{\varphi\}$ entails $\psi$ and $x$ does not occur free in $\psi$ or in any $\theta\in\Sigma$, then $\Sigma\cup\{\exists x\varphi\}$ entails $\psi$.
        \end{quote}
        \item The following is the introduction rule for existential quantifier:
        \begin{quote}
            If $y$ does not occur in the scope of $Qx$ in $\varphi$ for any $Q\in\{\exists,\forall,\tilde{\exists},\tilde{\forall}\}$, then $\varphi(y/x)$ (i.e. the formula one obtains by replacing every free occurrence of $x$ in $\varphi$ by $y$) entails $\exists x\varphi$.
        \end{quote}
        \item The following is the elimination rule for conjunction:
        \begin{quote}
            $\varphi\land\psi$ entails both $\varphi$ and $\psi$.
        \end{quote}
        \item The following is the introduction rule for conjunction:
        \begin{quote}
            $\{\varphi,\psi\}$ entails $\varphi\land\psi$.
        \end{quote}
       \item The following is the rule for dependence introduction:
        \begin{quote}
            $\exists x\varphi$ entails $\exists x (\=(\vec{z},x)\land\varphi)$ whenever $\varphi$ is a formula of dependence logic, where $\vec{z}$ lists the free variables of $\exists x\varphi$.
        \end{quote}
        \item The following is the first introduction rule for $\tilde\exists$:
        \begin{quote}
            If no variable with sort $\sort(x)$ occurs in any formula of $\Sigma$ and $\Sigma$ entails $\exists x\varphi$, then $\Sigma$ entails $\tilde\exists x\varphi$.
        \end{quote}
        \item The following is the second introduction rule for $\tilde\exists$:
        \begin{quote}
            If no variable with sort $\sort(x)$ occurs in any formula of $\Sigma$ and $\Sigma\cup\{\exists x\varphi\}$ entails $\exists y\psi$, where $\sort(y) = \sort(x)$, then $\Sigma\cup\{\tilde\exists x\varphi\}$ entails $\tilde\exists y\psi$.
        \end{quote}
    \end{enumerate}
\end{definition}

For the new sort existential quantifier  $\tilde\exists$, we only give the above rather immediate axioms. These rules could possibly be strengthened by allowing variables of sort $\sort(x)$ occur in the scope of $\tilde\exists y$ for $\sort(y) = \sort(x)$ in formulas of $\Sigma$.
It may also be interesting to look for more axioms for the sort quantifiers. For example, $\tilde\exists x\exists y \neg x=y$ for $\sort(x)=\sort(y)$ is a natural valid sentence (even though the sentence $\exists x\exists y \neg x=y$ is not valid) but apparently not derivable from our current axioms.

\begin{proposition}[Soundness Theorem]
    If $\varphi$ entails $\psi$ by repeated applications of the rules of Definitions~\ref{axioms}--\ref{Definition: Rules for logical operations}, then $\varphi\models\psi$ in team semantics.
\end{proposition}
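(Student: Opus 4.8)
The plan is to argue by induction on the length of a derivation, which reduces the claim to the \emph{soundness of each individual rule}: for every rule it suffices to check that whenever a structure $\A$ and team $X$ satisfy all of its premises, they also satisfy its conclusion. Since the rules are stated for sets of premises, I would first fix the semantic reading of $\Sigma\models\psi$ as ``$\A\models_X\theta$ for every $\theta\in\Sigma$ implies $\A\models_X\psi$'', prove soundness in this set-based form, and read off the stated single-premise case. Before treating the rules I would record three standard facts about the many-sorted, new-sort semantics of Definition~\ref{Definition: Semantics of independence logic}: (a) a \emph{locality} lemma, that $\A\models_X\varphi$ depends only on $X\restriction\mathrm{Fr}(\varphi)$; (b) a \emph{substitution} lemma relating $\A\models_X\varphi(y/x)$ to $\A\models_{X[F/x]}\varphi$ with $F(s)=\{s(y)\}$, valid under the capture-avoiding side condition; and (c) \emph{downward closure} of dependence-logic formulas. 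Each is proved by a routine induction on $\varphi$; the only genuinely new cases are the sort quantifiers $\tilde\forall,\tilde\exists$, for which one simply carries the argument through every expansion $\B$ of $\A$, so nothing essential changes.

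With this infrastructure the verification splits into three groups. First, the axioms (i)--(ix) of Definition~\ref{axioms}: the Constancy, Reflexivity, Symmetry, Weakening, Permutation and Fixed Parameter rules follow by directly unwinding clause (2) of the semantics and exhibiting the required witness $s''$ (for instance $s''=s'$ for Reflexivity), which I would present as one-line witness computations; the First and Second Transitivity rules and the Exchange rule are the subtler independence axioms whose team-semantic soundness is established in~\cite{MR3038039, MR3329287}, and in the present setting the same witness-chasing goes through verbatim since sorts are merely carried along. Second, the logical rules of Definition~\ref{Definition: Rules for logical operations}: the two conjunction rules are immediate from clause (3); existential introduction is exactly the substitution lemma (b) applied to $F(s)=\{s(y)\}$; and existential elimination follows from locality (a), since $x$ is free in neither $\psi$ nor any $\theta\in\Sigma$, the teams $X$ and a witnessing $X[F/x]$ agree on all relevant free variables, so one may pass between them for every such $\theta$ and for $\psi$, after which the hypothesis $\Sigma\cup\{\varphi\}\models\psi$ applies to $X[F/x]$; monotonicity in the enlarged $\Sigma'$ is trivial.

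The subtlest case, and the one I expect to be the main obstacle, is dependence introduction (Definition~\ref{Definition: Dependence introduction}): from $\A\models_X\exists x\varphi$ with $\varphi$ in dependence logic one must produce a witness for $\exists x(\=(\vec z,x)\land\varphi)$. Unwinding gives $\A\models_{X[F/x]}\varphi$ for some $F$, and the goal is to \emph{thin} $F$ to a singleton-valued $G$ that factors through $\vec z$ (so that $\=(\vec z,x)$ holds) while keeping $X[G/x]\subseteq X[F/x]$ (so that $\varphi$ survives by downward closure (c)). The obstruction is that two assignments of $X$ sharing a $\vec z$-value may have disjoint images under $F$; this is precisely where locality (a) enters: since $\vec z$ lists all free variables of $\exists x\varphi$, one may first replace $X$ by $X\restriction\vec z$, on which distinct assignments carry distinct $\vec z$-values, then choose one element of $F(s)$ for each such assignment, and finally transport the conclusion back to $X$ by locality applied to $\exists x(\=(\vec z,x)\land\varphi)$, whose free variables are again $\vec z$. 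This mirrors the argument of~\cite{MR3079984}, now carried out in the many-sorted framework. Assembling the three groups, the induction on the derivation yields $\varphi\models\psi$.
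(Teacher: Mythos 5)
Your proof is correct, but there is nothing in the paper to compare it against: the paper states this Soundness Theorem without proof, implicitly deferring rule-wise soundness to the sources of the rules (\cite{MR3038039, MR3329287} for the independence-atom axioms, \cite{MR3079984} for dependence introduction). Your argument reconstructs exactly that standard route: induction on derivations reduced to soundness of each rule, one- or two-step witness-chasing for the atom axioms (your witnesses check out, e.g.\ $s''=s'$ for Reflexivity and Constancy, and the two-stage chase for the Transitivity and Exchange rules goes through with sorts carried along inertly), locality for the existential elimination rule, the substitution lemma for existential introduction, and, for the only delicate rule (dependence introduction), the Kontinen--V\"a\"an\"anen thinning argument. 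There you correctly identified the genuine obstruction---two assignments sharing a $\vec{z}$-value may have disjoint images under the witness function $F$, so naive thinning does not produce a subteam of $X[F/x]$---and the correct fix: pass to $X\restriction\vec{z}$, where assignments are determined by their $\vec{z}$-values, thin $F$ to a singleton-valued $G$ there, invoke downward closure of dependence-logic formulas, and transport back by locality, using that $\vec{z}$ lists the free variables of both $\exists x\varphi$ and $\exists x(\=(\vec{z},x)\land\varphi)$. The one point worth making explicit in a write-up is that your locality lemma (a) holds because Definition~\ref{Definition: Semantics of independence logic} adopts lax semantics (nonempty-set-valued witness functions, non-disjoint splits); under strict semantics locality fails for independence logic, and both your existential-elimination case and the transport steps in dependence introduction would break.
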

\begin{proof}
    We show that the rules for $\tilde\exists$ are sound. For the second introduction rule, suppose that $\Sigma\cup\{\exists x\varphi\}\models\exists y\psi$, where $\sort(y) = \sort(x)$. Then suppose that $\A\models_X\Sigma\cup\{\tilde\exists x\varphi\}$. Then $\A$ has an expansion $\A^*$ by the sort $\sort(x)$ with $\A^*\models_X\Sigma\cup\{\exists x\varphi\}$. Thus $\A^*\models_X\exists y\psi$, whence $\A\models_X\tilde\exists y\psi$. The first introduction rule is the same but without the assumption $\tilde\exists x\varphi$.
\end{proof}

If $\varphi$ entails $\psi$ by repeated applications of the above rules, we write $\varphi\entails\psi$.

\section{Logical Properties of Teams}\label{sec:properties-of-teams}

Quantum physics provides a rich source of highly non-trivial dependence and independence concepts. Some of the most fundamental questions of quantum physics concern independence of outcomes of experiments. \Samson presented in~\cite{MR3038040} a relational (possibilistic) approach to model these dependence and independence phenomena. His framework very naturally transforms into a team-semantic adaptation which we will carry out now.

\subsection{Empirical and Hidden-Variable Teams}\label{sec:empirical-and-hidden-variable-teams}

As discussed in Section~\ref{dail}, we consider teams with designated variables for measurements and separate variables for outcomes. An important role in models of quantum physics is played by the so-called \emph{hidden variables}, variables 
which are not directly observable, but
which play a role in determining the outcomes of measurements, explaining indeterministic or non-local behaviour. The following terminology and notation is helpful in dealing with teams arising in this way in relation to  quantum physics.

We use a division of variables into three sorts, defined below. A priori there is no difference between the variables. 
This division into three sorts is simply helpful in guiding our intuitions.
Our purely abstract results about teams based on these variables help us organize quantum-theoretic concepts. However, it is worth noting that e.g.~the word ``measurement'' has a meaning that corresponds to a physical event and the assumptions we make in the form of the properties of teams presented in Section~\ref{subsec:properties-of-empirical-teams}, of course reflect the properties---observed or postulated---of these physical events.

\begin{definition}
    Fix
    \begin{itemize}
        \item a set $\Vm = \{x_0,\dots,x_{n-1}\}$ of measurement variables,
        \item a corresponding set $\Vo = \{y_0,\dots,y_{n-1}\}$ of outcome variables, and
        \item a set $\Vh = \{z_0,\dots,z_{l-1}\}$ of hidden variables.
    \end{itemize}
    We say that a team $X$ is an \emph{empirical team} if $\dom(X)=\Vm\cup\Vo$. We say that a team $X$ is a \emph{hidden-variable team} if $\dom(X)=\Vm\cup\Vo\cup\Vh$.
\end{definition}

Throughout the paper, we will denote by $n$ the number of measurement and outcome variables and by $l$ the number of hidden variables.

Definition~\ref{5.2} below makes the connection between our concept of an empirical team and the mathematical model predicting what the possible outcomes of experiments could be, namely the theory of operators of complex Hilbert spaces, explicit.

We will pay special attention to \emph{definability} of properties of teams. In other words, if $P$ is a property of teams, especially of empirical or hidden-variable teams, we ask whether there is a formula $\varphi$ of independence logic with the free variables $\Vm\cup\Vo$ (or $\Vm\cup\Vo\cup\Vh$) which is satisfied in the sense of team semantics exactly by those teams that have the property $P$.

A hidden-variable team is a team of the form

\begin{center}
    $Y=$
    \begin{tabular}{|cccccccc|}
        $x_0\phantom{{}^{-1}}$    & $y_0\phantom{{}^{-1}}$    & $\ldots$ & $x_{n-1}$       & $y_{n-1}$       & $z_0\phantom{{}^{-1}}$
        & $\ldots$
        & $z_{l-1}\phantom{{}^{1}}$
        \\
        \hline
        $a^0_0\phantom{{}^{-1}}$  & $b^0_0\phantom{{}^{-1}}$  & $\ldots$ & $a^0_{n-1}$     & $b^0_{n-1}$
        & $\gamma^0_0\phantom{{}^{-1}}$
        & $\ldots$
        & $\gamma^0_{l-1}\phantom{{}^{1}}$
        \\
        $a^2_0\phantom{{}^{-1}}$  & $b^1_0\phantom{{}^{-1}}$  & $\ldots$ & $a^2_{n-1}$     & $b^1_{n-1}$
        & $\gamma^1_0\phantom{{}^{-1}}$
        & $\ldots$
        & $\gamma^2_{l-1}\phantom{{}^{1}}$
        \\
        $\vdots\phantom{{}^{-1}}$ & $\vdots\phantom{{}^{-1}}$ & $\ddots$ & $\vdots$        & $\vdots$        & $\vdots\phantom{{}^{-1}}$
        & $\ddots$
        & $\vdots\phantom{{}^{1}}$
        \\
        $a^{m-1}_0$               & $b^{m-1}_0$               & $\ldots$ & $a^{m-1}_{n-1}$ & $b^{m-1}_{n-1}$ & $\gamma^{m-1}_0$
        & $\ldots$
        & $\gamma^{m-1}_{l-1}$
        \\
    \end{tabular}
\end{center}
where the $\gamma^i_j$ indicate values which we cannot observe directly. 
A typical hidden variable is some kind of ``state'' of the system. 

Every team has a background model from which the values of assignments come. In a many-sorted context the background model has one universe for each sort. The universes may intersect. We assume a universe also for the hidden-variable sort.

\begin{definition}[\cite{MR3038040}]\label{equivalent}
    A hidden-variable team $Y$ \emph{realizes} an empirical team $X$ if
    \[
        s\in X \iff \exists s'\in Y \bigwedge_{i<n}(s'(x_i)=s(x_i) \land s'(y_i)=s(y_i)).
    \]
    Two hidden-variable teams are said to be \emph{(empirically) equivalent} if they realize the same empirical team.
\end{definition}

The property of being the realization of a hidden-variable team is definable in independence logic. It can be defined simply by the existential quantifier: If $\varphi(\vec{x},\vec{y},\vec{z})$ is a formula of independence logic, and thereby defines a property of teams, then $\tilde{\exists} z_0\exists z_1\dots\exists z_{l-1}\varphi$ defines the class of empirical teams that are realized by some hidden-variable teams satisfying $\varphi$. The ``hidden'' character of the hidden variables is built into the semantics of the sort quantifier.

Realization of an empirical team by a hidden-variable team involves a kind of projection where one projects away the hidden variables. Hidden-variable teams are divided into equivalence classes according to whether they project into the same empirical team or not. This phenomenon can of course be thought of more generally: for any set $V$ of variables and $V'\subseteq V$, one can define a projection mapping $\Pr_{V'}$ such that if $X$ is a team with domain $V$, then $\Pr_{V'}(X) = \{s\restriction V' \mid s\in X \}$.

Next we use the resources of independence logic with its team semantics to express properties of empirical and hidden-variable teams. The possible benefits of expressing such properties in the formal language of independence logic are two-fold. First, the quantum-theoretic concepts may suggest interesting new facts about independence logic in general, applicable perhaps also in other fields. Second, concepts, proofs and constructions of independence logic may shed new light on connections between concepts in quantum physics, and may focus attention on what is particular to quantum physics, and what are merely general logical facts about independence concepts.

\subsection{Properties of Empirical Teams}\label{subsec:properties-of-empirical-teams}

We observe that the definitions of the simpler properties of empirical teams treated by \samson in~\cite{MR3038040} can be expressed by formulas of independence logic, in fact a conjunction of independence atoms. For the original definitions, we refer to~\cite{Dickson_1998, 231dee39-4a55-385c-aad2-2b0e67edd310, Shimony_1993, MR3790629}.

As discussed in Section {\ref{dail}}, a team is said to support \emph{weak determinism} if each outcome is determined by the combination of all the measurement variables:

\begin{definition}[Weak Determinism]
    An empirical team $X$ supports \emph{weak determinism} if it satisfies the formula
    \[
        \bigwedge_{i<n} \=(\vec{x},y_i). \tag{WD}
    \]
\end{definition}

Thus weak determinism is expressed simply with a conjunction of dependence atoms. In fact, the meaning of the dependence atom $\=(x,y)$ is that $x$ completely determines $y$. Therefore saying that teams supporting~\eqref{eq: Weak Determinism} support weak determinism is appropriate. The only difference to the ordinary dependence atom is that in~\eqref{eq: Weak Determinism} we separate the variables into the measurements $x_i$ and the outcomes $y_i$.

A team is said to support \emph{strong determinism} if the outcome variable $y_i$ of any measurement is completely determined by the measurement variable $x_i$.

\begin{definition}[Strong Determinism]
    An empirical team $X$ supports \emph{strong determinism} if it satisfies the formula
    \[
        \bigwedge_{i<n} \=(x_i,y_i). \tag{SD}
    \]
\end{definition}

We now come to the important \emph{no-signalling} condition. The motivation for this comes from the physical scenario with which we started, in which the parties $i \in n$ are spacelike separated from each other.
This means that there can be no information flowing between the measurements performed by each party; in particular, which measurement was performed at party $i$ cannot influence what the possible outcomes of a given measurement are at another party $j$. More generally, the possible outcomes of given measurements at a set of parties $I \subseteq n$ cannot be influenced by which measurements are performed at the remaining parties $n \setminus I$.
Crucially, although quantum mechanics is non-local, it does satisfy no-signalling, and hence is consistent with relativity theory.

This condition is formalized as follows.
Suppose the team $X$ has two possible measurement-outcome combinations $s$ and $s'$ with inputs $x_i$, $i\in I$, the same. So now $s(\{y_i \mid i\in I\})$ is a possible outcome of the measurements $\{x_i \mid i\in I\}$ in view of $X$. We demand that $s(\{y_i \mid i\in I\})$ is also a possible outcome if the inputs $s(x_j)$, $j\notin I$, of the other experiments are changed to $s'(x_j)$.

\begin{definition}[No-Signalling]
    An empirical team $X$ supports \emph{no-signalling} if it satisfies the formula
    \begin{equation}
        \label{eq: No-Signalling}
        \bigwedge_{I\subseteq n}\{x_i \mid i\notin I\}\perp_{\{x_i \mid i\in I\}}\{y_i \mid i\in I\}. \tag{NS}
    \end{equation}
\end{definition}

In \cite{MR3038040}, a weaker version of no-signalling is presented where the subsets $I$ are singletons, so the corresponding formula would be $\bigwedge_{i<n}\{ x_j \mid j\neq i \}\perp_{x_i}y_i$.

In principle, supporting no-signalling means just satisfying a conjunction of independence atoms. But the atoms are of a particular form because of our division of variables into different sorts. The atom $\{x_i \mid i\notin I\}\perp_{\{x_i \mid i\in I\}}\{y_i \mid i\in I\}$ says that the outcomes $y_i$, $i\in I$, are meant to be related to the measurements $x_i$, $i\in I$, and be totally independent of the measurements $x_j$, $j\notin I$.

\subsection{Properties of Hidden-Variable Teams}\label{subsec:properties-of-hidden-variable-teams}

For hidden-variable teams, the hidden variables are added in the definition of determinism as extra variables that determine the outcomes of the system.

\begin{definition}[Weak Determinism]
    A hidden-variable team $X$ supports \emph{weak determinism} if it satisfies the formula
    \begin{equation}
        \label{eq: Weak Determinism}
        \bigwedge_{i<n} \=(\vec{x}\vec{z},y_i). \tag{WD}
    \end{equation}
\end{definition}

\begin{definition}[Strong Determinism]
    A hidden-variable team $X$ supports \emph{strong determinism} if it satisfies the formula
    \begin{equation}
        \label{eq: Strong Determinism}
        \bigwedge_{i<n} \=(x_i\vec{z},y_i). \tag{SD}
    \end{equation}
\end{definition}

A team $X$ is said to support \emph{single-valuedness} if each hidden variable $z_k$ can only take one value.

\begin{definition}[Single-Valuedness]
    A hidden-variable team $X$ supports \emph{single-valuedness} if it satisfies the formula
    \begin{equation}
        \label{eq: Single-Valuedness}
        \=(\vec{z}) \tag{SV}
    \end{equation}
\end{definition}
The formula $\=(\vz)$ is a so-called \emph{constancy atom}~\cite{MR2480819}, a degenerate form of the dependence atom $\=(\vx,\vy)$, where $\vx$ is the empty tuple.

A team $X$ is said to support \emph{$\vz$-independence} if the following holds: Suppose the team $X$ has two measurement-outcome combinations $s$ and $s'$. Now the hidden variables $\vec{z}$ have some value $s(\vec{z})$ in the combination $s$. We demand that $s(\vec{z})$ should occur as the value of the hidden variable also if the inputs $s(\vec{x})$ are changed to $s'(\vec{x})$.

\begin{definition}[$\vz$-Independence]
    A hidden-variable team $X$ supports \emph{$\vz$-inde\-pend\-ence} if it satisfies the formula
    \begin{equation}
        \label{eq: z-Independence}
        \vec{z}\perp\vec{x}. \tag{$\vz$\hspace{1pt}I}
    \end{equation}
\end{definition}

\emph{Parameter independence} is the hidden-variable version of no-signalling. A team $X$ is said to support parameter-independence if the following holds: Suppose the team $X$ has two measurement-outcome combinations $s$ and $s'$ with the same input data about ${x_i}$, $i\in I$, and the same hidden variables $\vec{z}$, i.e. $s(\{x_i \mid i\in I\})=s'(\{x_i\mid i\in I\})$ and $s(\vec{z})=s'(\vec{z})$. We demand that the outcome data $s(\{y_i \mid i\in I\})$ should occur as a possible outcome also if the inputs $s(\{x_j \mid j\notin I\})$ are changed to $s'(\{x_j \mid j\notin I\})$.

\begin{definition}[Parameter Independence]
    A hidden-variable team $X$ supports \emph{parameter independence} if it satisfies the formula
    \begin{equation}
        \label{eq: Parameter Independence}
        \bigwedge_{I\subseteq n}\{x_i \mid i\notin I\}\perp_{\{x_i \mid i\in I\}\vec z}\{y_i \mid i\in I\}. \tag{PI}
    \end{equation}
\end{definition}

Note that as with no-signalling, the version of parameter independence presented in \cite{MR3038040} would correspond to the formula $\bigwedge_{i<n}\{ x_j \mid j\neq i \}\perp_{x_i\vec{z}}y_i$.

A team $X$ is said to support \emph{outcome-independence} if the following holds: Suppose the team $X$ has two measurement-outcome combinations $s$ and $s'$ with the same total input data $\vec{x}$ and the same hidden variables $\vec{z}$, i.e. $s(\vec{x})=s'(\vec{x})$ and $s(\vec{z})=s'(\vec{z})$. We demand that outcome $s(y_i)$ should occur as an outcome also if the outcomes $s(\{y_j \mid j\ne i\})$ are changed to $s'(\{y_j \mid j\ne i\})$. In other words, the variables $y_i$, $i<n$, are mutually independent whenever $\vec{x}\vec{z}$ is fixed.

\begin{definition}[Outcome Independence]
    A hidden-variable team $X$ supports \emph{outcome independence} if it satisfies the formula
    \begin{equation}
        \label{eq: Outcome Independence}
        \bigwedge_{i<n} y_i\perp_{\vec{x}\vec{z}}\{y_j \mid j\neq i\}. \tag{OI}
    \end{equation}
\end{definition}

All the previous examples were, from the point of view of independence logic, atoms or conjunctions of atoms of the same kind with a certain organization of the variables. We shall now consider a property which is slightly more complicated.

This is the crucial notion of  \emph{locality}, which expresses the idea that the possible outcomes  of a party can only depend on the input to that party, together with the values of the hidden variables, and not on the outcomes of any other party.
This strengthens the no-signalling condition, which only requires independence from the \emph{inputs} of the other parties. Whereas quantum mechanics satisfies no-signalling, it violates locality---hence allowing for non-local correlations of outcomes.
This condition is formalized in team semantics as follows:

\begin{definition}[Locality]
    A hidden-variable team $X$ satisfies \emph{locality} if
    \begin{gather*}
        \forall s_0,\dots,s_{n-1}\in X \left[ \exists s\in X\bigwedge_{i<n} s(x_i \vec{z}) = s_i(x_i \vec{z}) \right. \\
        \left. \implies \exists s'\in X\bigwedge_{i<n}s'(x_i y_i \vec{z}) = s_i(x_i y_i \vec{z}) \right].
    \end{gather*}
\end{definition}

The definition of locality is not \emph{per se} an expression of independence logic. However, in Lemma~\ref{Lemma: Locality is equivalent to parameter independence + outcome independence} below we prove that locality can be defined, after all, by a conjunction of independence atoms.

\subsection{Relationships between the Properties}\label{subsec:relationships-between-the-properties}

We present several logical consequences of independence logic and demonstrate how they can be interpreted in the context of empirical and hidden-variable teams. In many cases we can derive the logical consequence relation from the axioms of Definition~\ref{axioms}. Semantic proofs are due to~\cite{MR3038040},

It should be noted that logical consequence in independence logic is in principle a highly complex concept. For example, it cannot be axiomatized because the set of G\"odel numbers of valid sentences (of even dependence logic\footnote{This observation is essentially due to A. Ehrenfeucht, see~\cite{MR0143691}.}) is non-arithmetical. Even the implication problem for the independence atoms is undecidable~\cite{MR1358026}, while for dependence atoms it is decidable~\cite{MR0421121}.
Logical implication between finite conjunctions of independence atoms is, however, recursively axiomatizable, as it can be reduced to logical consequence in first-order logic by introducing a new predicate symbol.

Because of the complexity of logical consequence, it is important to accumulate good examples. We claim that the below examples arising from quantum mechanics are illustrative examples and may guide us in finding a more systematic approach.

\begin{lemma}
    $\=(\vec{x}\vec{z},\vec{y})\entails \bigwedge_{i<n} y_i\perp_{\vec{x}\vec{z}}\{y_j \mid j\neq i\}$.
\end{lemma}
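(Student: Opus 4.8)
The plan is to give a short syntactic derivation from the axioms of Definition~\ref{axioms}, exploiting the fact that, under our convention, the hypothesis $\=(\vec{x}\vec{z},\vec{y})$ is literally the independence atom $\vec{y}\perp_{\vec{x}\vec{z}}\vec{y}$, and that the Constancy Rule is tailor-made to convert such a ``determinism'' atom into independence from an arbitrary tuple. Throughout, fix $i<n$ and write $\vec{y}_{\neq i}$ for the tuple $\{y_j\mid j\neq i\}$. First I would apply the Constancy Rule (axiom (i)) to $\vec{y}\perp_{\vec{x}\vec{z}}\vec{y}$, taking the parameter block to be $\vec{x}\vec{z}$ and the right-hand tuple to be $\vec{y}_{\neq i}$; this yields $\vec{y}\perp_{\vec{x}\vec{z}}\vec{y}_{\neq i}$.

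Next I would reorganize the left-hand side. Since $\vec{y}$ is a permutation of $y_i$ followed by $\vec{y}_{\neq i}$, the Permutation Rule (axiom (v)) rewrites the atom as $y_i\vec{y}_{\neq i}\perp_{\vec{x}\vec{z}}\vec{y}_{\neq i}$. Applying the Weakening Rule (axiom (iv)) with empty deletion on the right and deleting $\vec{y}_{\neq i}$ from the left then gives exactly the $i$-th conjunct $y_i\perp_{\vec{x}\vec{z}}\vec{y}_{\neq i}$. Running this for every $i<n$ and combining the results via conjunction introduction (Definition~\ref{Definition: Rules for logical operations}) produces $\bigwedge_{i<n} y_i\perp_{\vec{x}\vec{z}}\vec{y}_{\neq i}$, which is the desired conclusion; by the Soundness Theorem this also settles the semantic entailment.

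As a transparent sanity check I would run the argument semantically against Definition~\ref{Definition: Semantics of independence logic}. Assuming $\A\models_X\=(\vec{x}\vec{z},\vec{y})$, i.e.\ $s(\vec{y})=s'(\vec{y})$ whenever $s(\vec{x}\vec{z})=s'(\vec{x}\vec{z})$, one witnesses $y_i\perp_{\vec{x}\vec{z}}\vec{y}_{\neq i}$ by taking, for any $s,s'\in X$ with $s(\vec{x}\vec{z})=s'(\vec{x}\vec{z})$, the assignment $s''=s$: then $s''(\vec{x}\vec{z}\,y_i)=s(\vec{x}\vec{z}\,y_i)$ holds trivially, while $s''(\vec{y}_{\neq i})=s(\vec{y}_{\neq i})=s'(\vec{y}_{\neq i})$ because determinism forces $s(\vec{y})=s'(\vec{y})$. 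This makes clear why the lemma holds: once $\vec{x}\vec{z}$ pins down $\vec{y}$, the outcomes are independent only in the degenerate sense that each is already a function of the fixed data.

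I do not expect a genuine obstacle here. The only care required is bookkeeping: matching the tuples to the schematic forms of the Permutation and Weakening rules and checking that Weakening may be used with an empty right-hand deletion. The one conceptual point worth flagging is that this entailment is essentially vacuous---outcome independence follows from weak determinism precisely because fixing $\vec{x}\vec{z}$ leaves no freedom in $\vec{y}$---so the result carries no real quantum-theoretic content and serves mainly to locate weak determinism above outcome independence in the lattice of these properties.
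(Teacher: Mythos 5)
Your proposal is correct and takes essentially the same approach as the paper: the paper's proof obtains each conjunct $y_i\perp_{\vec{x}\vec{z}}\{y_j \mid j\neq i\}$ directly from $\vec{y}\perp_{\vec{x}\vec{z}}\vec{y}$ by a single application of the Weakening Rule (with the necessary permutation left implicit), then concludes by conjunction introduction. Your interposed Constancy step followed by Permutation and Weakening is merely a slightly longer bookkeeping of the same syntactic derivation, and your semantic sanity check is accurate.
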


In words, if a hidden-variable team supports weak determinism, then it supports outcome independence.

\begin{proof}
    $\=(\vec{x}\vec{z},\vec{y})$ means $\vec{y}\perp_{\vec{x}\vec{z}}\vec{y}$. Given any $i<n$, one obtains $y_i\perp_{\vec{x}\vec{z}}\{y_j \mid j\neq i\}$ from $\vec{y}\perp_{\vec{x}\vec{z}}\vec{y}$ by a single application of the Weakening Rule of independence atoms.
\end{proof}

\begin{lemma}
    $\bigwedge_{i<n}\=(x_i\vec z,y_i)\entails\bigwedge_{I\subseteq n}\{x_i \mid i\notin I\}\perp_{\{x_i \mid i\in I\}\vec z}\{y_i \mid i\in I\}$.
\end{lemma}

In words, if a hidden-variable team supports strong determinism, then it supports parameter independence

\begin{proof}
    Fix $I\subseteq n$. Using Armstrong's axioms, one can obtain
    \[
        \{\=(x_i\vec{z},y_i) \mid i\in I\}\entails\=(\{x_i \mid i\in I\}\vz,\{y_i \mid i\in I\}).
    \]
    Note that $\=(\{x_i \mid i\in I\}\vz,\{y_i \mid i\in I\})$ means $\{y_i \mid i\in I\}\perp_{\{x_i \mid i\in I\}\vz}\{y_i \mid i\in I\}$. Now the Constancy Rule of independence atoms gives $\{y_i \mid i\in I\}\perp_{\{x_i \mid i\in I\}\vz}\vec{w}$ for any variable tuple $\vec{w}$, in particular when $\vec{w} = \{ x_i \mid i\notin I \}$. Finally, we obtain $\{x_i \mid i\notin I\}\perp_{\{x_i \mid i\in I\}\vec z}\{y_i \mid i\in I\}$ by using the Symmetry Rule.
\end{proof}

\begin{lemma}
    $\left( \bigwedge_{I\subseteq n}\{ x_i \mid i\notin I \}\!\perp_{\{x_i \mid i\in I\}\vec{z}}\!\{y_i \mid i\in I\} \right) \land \=(\vec{x}\vec{z},\vec{y})\entails\bigwedge_{i<n}\!\=(x_i\vec{z}, y_i)$.
\end{lemma}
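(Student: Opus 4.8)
The plan is to prove, for each fixed $i<n$ separately, that the $i$-th conjunct of Parameter Independence together with Weak Determinism entails the $i$-th Strong Determinism atom $\=(x_i\vec{z},y_i)$, and then to combine these via conjunction introduction to obtain $\bigwedge_{i<n}\=(x_i\vec{z},y_i)$. Throughout, write $\vec{w}$ for the tuple $\{x_j \mid j\neq i\}$, so that $x_i\vec{w}$ is a permutation of $\vec{x}$.

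First I would unfold Weak Determinism: $\=(\vec{x}\vec{z},\vec{y})$ is by definition $\vec{y}\perp_{\vec{x}\vec{z}}\vec{y}$. A single application of the Weakening Rule, discarding all outcome variables except $y_i$ on both sides of the atom, yields $y_i\perp_{\vec{x}\vec{z}}y_i$, and the Permutation Rule rewrites the parameter block to give $y_i\perp_{x_i\vec{z}\vec{w}}y_i$. Intuitively this records that $y_i$ is determined by the full data $x_i\vec{w}\vec{z}$; the task is to eliminate the spurious dependence on $\vec{w}$.

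The key step is the First Transitivity Rule, $\vec{x}\perp_{\vec{z}}\vec{y}\wedge\vec{u}\perp_{\vec{z}\vec{x}}\vec{y}$ entails $\vec{u}\perp_{\vec{z}}\vec{y}$. I would instantiate it with parameter $\vec{z}:=x_i\vec{z}$, with $\vec{x}:=\vec{w}$, and with both $\vec{y}:=y_i$ and $\vec{u}:=y_i$. Its first premise then reads $\vec{w}\perp_{x_i\vec{z}}y_i$, which is exactly the $i$-th conjunct of Parameter Independence; its second premise reads $y_i\perp_{x_i\vec{z}\vec{w}}y_i$, which is precisely the weakened Weak Determinism atom obtained above. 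The rule therefore delivers $y_i\perp_{x_i\vec{z}}y_i$, i.e. $\=(x_i\vec{z},y_i)$, as desired.

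I expect the only real subtlety to be the bookkeeping of the variable tuples: one must verify that $x_i\vec{z}\vec{w}$ is genuinely a permutation of $\vec{x}\vec{z}$, so that the weakened atom matches the second premise of First Transitivity, and that assigning $y_i$ to both the $\vec{y}$ and $\vec{u}$ slots of the rule is legitimate. As a semantic sanity check: given $s,s'\in X$ with $s(x_i\vec{z})=s'(x_i\vec{z})$, Parameter Independence supplies an $s''\in X$ agreeing with $s$ on all of $\vec{x}\vec{z}$ and with $s'$ on $y_i$; Weak Determinism then forces $s''(y_i)=s(y_i)$, whence $s(y_i)=s''(y_i)=s'(y_i)$, confirming $\=(x_i\vec{z},y_i)$.
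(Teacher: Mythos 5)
Your proof is correct and follows essentially the same route as the paper's: unfold $\=(\vec{x}\vec{z},\vec{y})$ to $\vec{y}\perp_{\vec{x}\vec{z}}\vec{y}$, weaken to $y_i\perp_{\vec{x}\vec{z}}y_i$, and combine with the $i$-th parameter-independence conjunct via the First Transitivity Rule to get $y_i\perp_{x_i\vec{z}}y_i$. Your explicit Permutation Rule step (rewriting the parameter block as $x_i\vec{z}\vec{w}$ so the second premise of First Transitivity matches syntactically) is a piece of bookkeeping the paper leaves implicit, so if anything your version is slightly more careful.
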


In words, if a hidden-variable team supports parameter independence and weak determinism, then it supports strong determinism.

\begin{proof}
    Fix $i<n$. $\=(\vec{x}\vec{z},\vec{y})$ means $\vec{y}\perp_{\vec{x}\vec{z}}\vec{y}$, from which we get $y_i\perp_{\vec{x}\vec{z}}y_i$ using the Weakening Rule. From parameter independence we obtain $\{ x_j \mid j\neq i \}\perp_{x_i\vec{z}}y_i$ by choosing the conjunct with $I = n\setminus\{i\}$. Then we have
    \[
        \{ x_j \mid j\neq i \}\perp_{x_i\vec{z}}y_i \land y_i\perp_{\vec{x}\vec{z}}y_i.
    \]
    Finally, the First Transitivity Rule yields $y_i\perp_{x_i\vec{z}}y_i$, which means $\=(x_i\vec{z}, y_i)$.
\end{proof}

\begin{lemma}
    \label{Lemma: Locality is equivalent to parameter independence + outcome independence}
    Locality is equivalent to the formula
    \[
        \left(\bigwedge_{I\subseteq n} \{ x_i \mid i\notin I \}\perp_{\{x_i \mid i\in I\}\vec{z}}\{y_i \mid i\in I\}\right) \land \left(\bigwedge_{i<n} y_i\perp_{\vec{x}\vec{z}}\{y_j \mid j\neq i\} \right).
    \]
\end{lemma}

In words, a hidden-variable team $X$ supports locality if and only if it supports both parameter independence and outcome independence.

\begin{proof}

    It is essentially proved in~\cite{MR3038040} that the weaker version of parameter independence
    \[
        \bigwedge_{i<n}\{x_j \mid j\neq i\}\perp_{x_i\vz}y_i
    \]
    together with outcome independence is equivalent to locality. What is left is to show that the stronger version of parameter independence still follows from locality.
    So suppose that $X$ supports locality and fix $I\subseteq n$. We show that
    \[
       X\models\{ x_i \mid i\notin I \}\perp_{\{x_i \mid i\in I\}\vec{z}}\{y_i \mid i\in I\}.
    \]
    Let $s,s'\in X$ be such that $s(x_i\vec{z})=s'(x_i\vec{z})$ for all $i\in I$. We wish to find $s''\in X$ with $s''(\vx\vz)=s(\vx\vz)$ and $s''(\{y_i \mid i\in I\})=s'(\{y_i \mid i\in I\})$. Let $s_i = s'$ for $i\in I$ and $s_i = s$ for $i\notin I$. Now $s$ is such that for $i\in I$, $s(x_i\vec{z}) = s'(x_i\vec{z}) = s_i(x_i\vec{z})$, but also for $i\notin I$ we have $s(x_i\vec{z}) = s_i(x_i\vec{z})$ (as $s = s_i$). Hence we have $s(x_i\vec{z})=s_i(x_i\vec{z})$ for all $i<n$, so by locality there exists $s''\in X$ with $s''(x_iy_i\vec{z}) = s_i(x_iy_i\vec{z})$ for all $i<n$. But then $s''(x_i\vz) = s_i(x_i\vec{z}) = s(x_i\vec{z})$ for all $i<n$ and $s''(y_i) = s_i(y_i) = s'(y_i)$ for $i\in I$. Thus $s''$ is as desired.
\end{proof}

Next we indicate connections between properties of empirical teams and properties of hidden-variable teams, again following~\cite{MR3038040}.

\begin{proposition}\label{317}
    The sentence $\tilde\exists z_0\exists z_1\dots\exists z_{l-1} \=(\vec{z})$ is valid. More generally, if the variables $\vec{z}$ do not occur in $\varphi$, then $\varphi\entails\tilde\exists z_0\exists z_1\dots\exists z_{l-1} (\=(\vec{z})\wedge\varphi)$.
\end{proposition}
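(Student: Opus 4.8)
The plan is to witness the existential quantifiers by constant functions, so that each $z_i$ is pinned to a single fixed value across the whole team; this is exactly what makes the constancy atom $\=(\vec z)$ hold, while leaving the values of all other variables untouched. For the general statement I would start from a structure $\A$ and a team $X$ with $\A\models_X\varphi$, and use that every sort domain is nonempty (without this there need not be a witnessing function at all) to choose, for each $i<l$, an element $a_i\in A_{\sort(z_i)}$. Letting $F_i$ be the constant function $s\mapsto\{a_i\}$, I would form the successive supplementations $X_0=X$ and $X_{i+1}=X_i[F_i/z_i]$, ending with $X'=X_l$.

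Two facts about $X'$ then need checking. First, every assignment in $X'$ sends each $z_i$ to $a_i$, so any two assignments of $X'$ agree on $\vec z$; hence $\A\models_{X'}\=(\vec z)$. Second, since each step only appends a single constant value, the restriction of $X'$ to $\dom(X)$ is exactly $X$, and in particular $X'$ and $X$ have the same restriction to the free variables of $\varphi$ (here I use that $\vec z$ is not free in $\varphi$). By the standard locality property of team semantics---satisfaction of a formula depends only on the restriction of the team to the formula's free variables, established by a routine induction on $\varphi$---I may transport $\A\models_X\varphi$ to $\A\models_{X'}\varphi$. Combining the two gives $\A\models_{X'}\=(\vec z)\wedge\varphi$, and unwinding the $l$ supplementations through clause (vi) of Definition~\ref{Definition: Semantics of independence logic} yields $\A\models_X\exists z_0\dots\exists z_{l-1}(\=(\vec z)\wedge\varphi)$, as desired.

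For the first assertion, that $\exists z_0\dots\exists z_{l-1}\=(\vec z)$ is valid, I would apply the same construction to a team over the empty domain, the only case relevant for a sentence: the constant witnesses turn $\{\emptyset\}$ into a singleton team, on which $\=(\vec z)$ holds trivially (and $\emptyset$ satisfies everything vacuously). This is just the general statement with a vacuous matrix.

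The routine-but-load-bearing step is the locality property invoked above: it is what lets me carry $\A\models_X\varphi$ across the domain extension, and it is the only place the hypothesis ``$\vec z$ not free in $\varphi$'' is used. I would also flag a point of care that dictates the choice of method. Because $\=(\vec z)$ asserts genuine \emph{constancy} of $\vec z$ rather than mere functional dependence on the remaining variables, the witnesses really must be chosen constant---one $a_i$ for all $s$---and not merely as functions of the other variables. This is precisely why I favour the direct semantic argument over trying to assemble the derivation from the existential-introduction and dependence-introduction rules: the dependence-introduction step would only deliver that each $z_i$ is determined by the free variables of $\varphi$, which falls short of constancy whenever $\varphi$ has free variables.
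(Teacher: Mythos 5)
Your semantic argument is correct as a proof of a \emph{semantic} consequence, but it proves the wrong statement. The proposition does not assert $\varphi\models\exists z_0\dots\exists z_{l-1}(\=(\vec{z})\wedge\varphi)$; it asserts $\varphi\entails\exists z_0\dots\exists z_{l-1}(\=(\vec{z})\wedge\varphi)$, where $\entails$ is the paper's symbol for derivability by repeated applications of the rules of Definitions~\ref{axioms}, \ref{Definition: Rules for logical operations} and~\ref{Definition: Dependence introduction}. In this paper the distinction is not pedantic: logical consequence in independence logic is not axiomatizable, so $\models$ does not in general yield $\entails$, and the whole point of this proposition, in context, is that this particular implication \emph{is} provable from the axioms --- in contrast with the later propositions of Section~\ref{subsec:relationships-between-the-properties}, for which the paper explicitly says it does not know whether they are derivable. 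Derivability also buys something concrete: by the Probabilistic Soundness Theorem, anything established via $\entails$ transfers automatically to probabilistic team semantics, which is exactly how the paper later concludes $\varphi\models\exists z_0\dots\exists z_{l-1}(\=(\vec{z})\land\varphi)$ for probabilistic teams. Your possibilistic team construction gives none of this; what you prove is essentially the remark that follows the proposition, which the paper obtains from the proposition by soundness (your argument for the first assertion, plain validity of the sentence, is fine, since that part is stated semantically).

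The deeper problem is your closing paragraph, where you reject the syntactic route on the grounds that dependence introduction only yields $\=(\vec{w},z_i)$ for $\vec{w}$ the free variables of the formula, ``which falls short of constancy whenever $\varphi$ has free variables.'' This objection dissolves once the derivation is organized as the paper does it: one first derives the \emph{closed} sentence $\exists z_0\dots\exists z_{l-1}\,\vec{z}\perp_{\vec{z}}\vec{z}$ (an axiom of Definition~\ref{axioms} followed by existential introduction). Since this sentence has no free variables, dependence introduction applied to its outermost quantifier produces genuine constancy $\=(z_0)$, and for the inner quantifiers it produces $\=(z_0\dots z_{k-1},z_k)$, which together with the already obtained $\=(z_0),\dots,\=(z_{k-1})$ yields $\=(z_k)$ by Armstrong's axioms; this gives the theorem $\exists z_0\dots\exists z_{l-1}\=(\vec{z})$. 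Only afterwards is $\varphi$ brought in: because $\vec{z}$ is not free in $\varphi$, existential elimination lets one assume $\=(\vec{z})$ alongside $\varphi$, conjoin, and re-introduce the quantifiers to obtain $\exists z_0\dots\exists z_{l-1}(\=(\vec{z})\land\varphi)$. So the constancy is derived in a closed context and then imported --- the very manoeuvre your objection overlooks. As it stands, your proposal establishes a strictly weaker claim and dismisses the method that the actual claim requires.
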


In words, every empirical team is realized by a hidden-variable team supporting single-valuedness.

\begin{proof}
    By the Reflexivity Rule, we have $\vec{z}\perp_{\vec{z}}\vec{z}$. Using introduction of existential quantifier $l$ times, we obtain $\exists z_0\dots\exists z_{l-1}\ \vec{z}\perp_{\vec{z}}\vec{z}$. Using elimination of existential quantifier and introduction of dependence $l$ times we obtain
    \[
        \exists z_0\dots\exists z_{l-1}\left(\bigwedge_{k<l}\=(z_k)\land\vec{z}\perp_{\vec{z}}\vec{z}\right).
    \]
    As, from Armstrong's axioms, one can infer $\bigwedge_{k<l}\=(z_k)\entails\=(\vec{z})$, we then easily obtain $\exists\vec{z}\=(\vec{z})$. Then, assuming $\varphi$, by using elimination and introduction of existential quantifier $l$ times, we obtain $\exists\vec{z}(\=(\vec{z})\land\varphi)$. Finally, the first introduction rule of $\tilde\exists$ gives $\tilde\exists z_0\exists z_1\dots\exists z_{l-1} (\=(\vec{z})\wedge\varphi)$.
\end{proof}

\begin{proposition}
    \label{Proposition: An empirical team supports no-signalling iff it is realized by a hidden-variable team supporting z-independence and parameter independence}
    Let
    \[
        \varphi = \bigwedge_{I\subseteq n}\{x_i \mid i\notin I\}\perp_{\{x_i\mid i\in I\}}\{y_i \mid i\in I\}
    \]
    and
    \[
        \psi = \tilde{\exists}z_0\exists z_1\dots\exists z_{l-1} \left( \vec{z}\perp\vec{x} \land \bigwedge_{I\subseteq n} \{x_i \mid i\notin I\}\perp_{\{x_i\mid i\in I\}\vec{z}}\{y_i\mid i\in I\} \right).
    \]
    Then $\varphi \dashv\vdash \psi$.
\end{proposition}

In words, an empirical team supports no-signalling if and only if it can be realized by a hidden-variable team supporting $\vz$-independence and parameter independence.

\begin{proof}
    We first show that $\varphi\entails\psi$. First of all, assume $\=(\vz)$, which is harmless in view of Proposition~\ref{317}. Note that $\=(\vz)$ means $\vec z\perp\vec z$. By the Constancy Rule, $\vec z\perp\vec z$ entails $\vec z\perp\vec x$. Then fix $I\subseteq n$. Note that $\=(\vz)$ also means $\=(\emptyset,\vz)$. Now from Armstrong's axioms one can obtain $\=(\vec w,\vz)$ for any variable tuple $\vec w$, in particular when $\vec w = \{x_i \mid i\in I\}\cup\{y_i \mid i\in I\}$. Now $\=(\{x_i \mid i\in I\}\{y_i \mid i\in I\},\vec z)$ means $\vec z\perp_{\{x_i \mid i\in I\}\{y_i \mid i\in I\}}\vec z$. By the Constancy Rule, $\vec z\perp_{\{x_i \mid i\in I\}\{y_i \mid i\in I\}}\vec z$ entails $\{x_i \mid i\notin I\}\perp_{\{x_i \mid i\in I\}\{y_i \mid i\in I\}}\vec z$. Then by Contraction, $\{x_i \mid i\notin I\}\perp_{\{x_i \mid i\in I\}}\{y_i \mid i\in I\}$ and $\{x_i \mid i\notin I\}\perp_{\{x_i \mid i\in I\}\{y_i \mid i\in I\}}\vec z$ together entail $\{ x_i \mid i\notin I \}\perp_{\{x_i \mid i\in I\}}\{y_i \mid i\in I\}\vec{z}$, which by Weak Union entails $\{ x_i \mid i\notin I \}\perp_{\{x_i \mid i\in I\}\vec{z}}\{y_i \mid i\in I\}$.
    
    Hence from the assumptions $\=(\vec z)$ and $\{x_i \mid i\notin I\}\perp_{\{x_i \mid i\in I\}}\{y_i \mid i\in I\}$, we can deduce $\vec z\perp\vec x \land \{ x_i \mid i\notin I \}\perp_{\{x_i \mid i\in I\}\vec{z}}\{y_i \mid i\in I\}$. Introducing conjunctions and existential quantifiers, we obtain that $\=(\vz)$ and $\varphi$ entail
    \[
        \exists z_0\dots\exists z_{l-1} \left( \vec{z}\perp\vec{x} \land \bigwedge_{I\subseteq n} \{x_i \mid i\notin I\}\perp_{\{x_i\mid i\in I\}\vec{z}}\{y_i\mid i\in I\} \right),
    \]
    and hence by eliminating the existential quantifiers of the formula $\exists z_0\dots\exists z_{l-1}\=(\vec z)$, we obtain that $\exists z_0\dots\exists z_{l-1}\=(\vec z)$ and $\varphi$ together entail
    \[
        \exists z_0\dots\exists z_{l-1} \left( \vec{z}\perp\vec{x} \land \bigwedge_{I\subseteq n}\{x_i \mid i\notin I\}\perp_{\{x_i\mid i\in I\}\vz}\{y_i \mid i\in I\} \right).
    \]
    Then the elimination rule of $\tilde\exists$ gives that $\tilde\exists z_0\exists z_1\dots\exists z_{l-1}\=(\vec z)$ and $\varphi$ entail $\psi$. As $\tilde\exists z_0\exists z_1\dots\exists z_{l-1}\=(\vec z)$ can be deduced with no assumptions, we obtain the desired deduction.

    Next we show that $\psi\entails\varphi$.
    Given $I\subseteq n$, consider the assumptions $\vec{z}\perp\vec{x}$ and $\{ x_i \mid i\notin I \}\perp_{\{x_i\mid i\in I\}\vec{z}}\{y_i \mid i\in I\}$. By Weak Union, Permutation and Symmetry, from $\vec z\perp \vec x$ we obtain $\{ x_i \mid i\notin I \}\perp_{\{ x_i \mid i\in I \}}\vec z$. From $\{ x_i \mid i\notin I \}\perp_{\{ x_i \mid i\in I \}}\vec z$ and $\{x_i \mid i\notin I\}\perp_{\{x_i\mid i\in I\}\vec{z}}\{y_i\mid i\in I\}$ Contraction gives $\{x_i \mid i\notin I\}\perp_{\{x_i\mid i\in I\}}\{y_i\mid i\in I\}\vz$, whence the Weakening Rule yields $\{x_i \mid i\notin I\}\perp_{\{x_i\mid i\in I\}}\{y_i\mid i\in I\}$. Introducing conjunctions, $\vec{z}\perp\vec{x}$ together with $\bigwedge_{I\subseteq n} \{x_i \mid i\notin I\}\perp_{\{x_i\mid i\in I\}\vec{z}}\{y_i\mid i\in I\}$ entails $\bigwedge_{I\subseteq n} \{x_i \mid i\notin I\}\perp_{\{x_i\mid i\in I\}}\{y_i\mid i\in I\}$, so by elimination of existential quantifiers in $\exists z_0\dots\exists z_{l-1} \left( \vec{z}\perp\vec{x} \land \bigwedge_{I\subseteq n} \{x_i \mid i\notin I\}\perp_{\{x_i\mid i\in I\}\vec{z}}\{y_i\mid i\in I\} \right)$, we obtain the deduction $\exists z_0\dots\exists z_{l-1} \left( \vec{z}\perp\vec{x} \land \bigwedge_{I\subseteq n} \{x_i \mid i\notin I\}\perp_{\{x_i\mid i\in I\}\vec{z}}\{y_i\mid i\in I\} \right)\entails\varphi$. By the elimination rule of $\tilde\exists$, $\psi\entails\varphi$ finally follows.
\end{proof}

\begin{proposition}
    \label{Proposition: Every empirical team is realized by a hidden-variable team supporting strong determinism}
    The formula $\tilde{\exists}z_0\exists z_1\dots\exists z_{l-1}\bigwedge_{i<n}\=(x_i\vec{z},y_i)$ is valid. More generally, $\varphi\models\tilde{\exists}z_0\exists z_1\dots\exists z_{l-1}(\bigwedge_{i<n}\=(x_i\vec{z},y_i)\wedge\varphi)$, when $\vec{z}$ does not occur free in $\varphi$.
\end{proposition}

In words, every empirical team is realized by a hidden-variable team supporting strong determinism.

\begin{proof}
    Essentially proved in~\cite{MR3038040}.
\end{proof}

\begin{remark}\label{Remark: With sort quantifier is valid but without is not}
    Note that while the formula $\tilde{\exists}z_0\exists z_1\dots\exists z_{l-1}\bigwedge_{i<n}\=(x_i\vec{z},y_i)$ is valid, the formula $\exists z_0\dots\exists z_{l-1}\bigwedge_{i<n}\=(x_i\vec{z},y_i)$ may not be, as demonstrated by the simple counter-example in the case where $n = 4$ and $l=1$: the domain of the structure is $\{0,1\}$, and the team is the full team $\{0,1\}^{\Vm\cup\Vo}$. It would seem that this problem could be overcome by increasing the length of the hidden-variable tuple: a sufficient condition for $\exists z_0\dots\exists z_{l-1}\bigwedge_{i<n}\=(x_i\vec{z},y_i)$ to be satisfied is that $\vec z$ can be assigned enough values to make each value of $x_i\vec z$ unique for all $i<n$, whence $\=(x_i\vec{z},y_i)$ is trivially satisfied.
\end{remark}

\begin{proposition}
    The formula $\tilde{\exists}z_0\exists z_1\dots\exists z_{l-1}(\=(\vec{x}\vec{z},\vec{y})\land \vec{z}\perp\vec{x})$ is valid. More generally, $\varphi\models\tilde{\exists}z_0\exists z_1\dots\exists z_{l-1}(\=(\vec{x}\vec{z},\vec{y})\land \vec{z}\perp\vec{x}\wedge\varphi)$, when $\vec{z}$ does not occur free in $\varphi$.
\end{proposition}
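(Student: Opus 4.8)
The plan is to argue semantically by an explicit construction, as in the preceding propositions. Fix a structure $\A$ and an empirical team $X$ with $\A\models_X\varphi$; I will exhibit an expansion $\B$ of $\A$ by the hidden-variable sort together with supplementing functions witnessing the right-hand side. Let $M=\{s(\vec{x})\mid s\in X\}$ be the set of measurement tuples occurring in $X$, and for $\vec{a}\in M$ put $O_{\vec{a}}=\{s(\vec{y})\mid s\in X,\ s(\vec{x})=\vec{a}\}$, a nonempty set of observed outcome tuples. Call a \emph{response function} any $\lambda$ with domain $M$ satisfying $\lambda(\vec{a})\in O_{\vec{a}}$ for every $\vec{a}\in M$, and let $\Lambda$ be the nonempty set of all such $\lambda$. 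I would take $\B$ to be the expansion of $\A$ by the sort $\sort(z_0)$ whose domain is $\Lambda$, with all relation symbols interpreted as in $\A$; this $\B$ is the expansion witnessing the quantifier $\tilde{\exists}z_0$, and choosing the new sort to be $\Lambda$ is precisely why a new-sort quantifier rather than an ordinary $\exists$ is needed, since the response functions need not live in the original domain.

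Next I would supplement. Fix once and for all some $\lambda_0\in\Lambda$, and define $F_0\colon X\to\Pow(\Lambda)\setminus\{\emptyset\}$ by $F_0(s)=\{\lambda\in\Lambda\mid\lambda(s(\vec{x}))=s(\vec{y})\}$; this is nonempty because $s(\vec{y})\in O_{s(\vec{x})}$ while every other $O_{\vec{a}}$ is nonempty, so a compatible $\lambda$ can be assembled by a choice. For $1\le k<l$ let $F_k$ be constantly $\{\lambda_0\}$, and set $Y=X[F_0/z_0][F_1/z_1]\cdots[F_{l-1}/z_{l-1}]$, the hidden-variable team witnessing $\exists z_0\exists z_1\cdots\exists z_{l-1}$. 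The key structural fact to record is that the members of $Y$ are exactly the assignments sending $\vec{x}\vec{y}z_0$ to $\vec{a},\lambda(\vec{a}),\lambda$ (and each remaining $z_k$ to $\lambda_0$) as $\vec{a}$ ranges over $M$ and $\lambda$ over $\Lambda$. From this, realization is immediate: every member of $Y$ restricts to a member of $X$, and conversely each $s\in X$ extends into $Y$ since $F_0(s)\ne\emptyset$. Weak determinism $\=(\vec{x}\vec{z},y_i)$ follows because two members of $Y$ agreeing on $\vec{x}\vec{z}$ share the same $\vec{a}$ and the same $\lambda$, hence the same $\vec{y}=\lambda(\vec{a})$.

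It remains to check $z$-independence and the transfer of $\varphi$. For $\vec{z}\perp\vec{x}$, take two members of $Y$, say with hidden part $(\lambda,\lambda_0,\dots,\lambda_0)$ from the first and measurement $\vec{a}'\in M$ from the second; the assignment sending $\vec{x}\vec{y}\vec{z}$ to $\vec{a}',\lambda(\vec{a}'),(\lambda,\lambda_0,\dots,\lambda_0)$ again lies in $Y$ because $\vec{a}'\in M$ and $\lambda$ is total on $M$, and it is the required witness. For $\varphi$, since $\vec{z}$ is not free in $\varphi$ and $\varphi$ is a $\tau$-formula, $\B\models_Y\varphi$ reduces by locality of independence logic to $\B\models_X\varphi$, which coincides with $\A\models_X\varphi$ and holds by hypothesis. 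The main obstacle, and the crux of the whole argument, is the independence $\vec{z}\perp\vec{x}$: it works exactly because every response function is defined on all of $M$, so any $\lambda$ may be freely recombined with any measurement setting, and the only real care is to thread this totality through the new-sort semantics by making the new sort large enough to contain all of $\Lambda$. Note that no no-signalling hypothesis on $X$ is used, which is consistent with the fact that this construction does not by itself yield parameter independence or locality.
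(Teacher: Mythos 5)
Your proof is correct and is essentially the paper's own argument: your set $\Lambda$ of response functions is exactly the paper's $\Gamma$ (functions $\gamma$ with $\gamma(\vec{a})\in O_{\vec{a}}$ for all $\vec{a}\in M$), and your team $Y$ consists of precisely the same assignments $\vec{x}\vec{y}\vec{z}\mapsto(\vec{a},\lambda(\vec{a}),\lambda,\lambda_0,\dots,\lambda_0)$, with weak determinism coming from function application and $z$-independence from totality of $\lambda$ on $M$. The only differences are bookkeeping: you spell out the expansion $\B$ and the supplementation functions $F_k$ (padding $z_1,\dots,z_{l-1}$ with a constant) where the paper assumes $l=1$ without loss of generality and argues via realization, and both proofs dispatch $\varphi$ by locality.
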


In words, every empirical team is realized by a hidden-variable team supporting weak determinism and $\vz$-independence.

\begin{proof}
    Essentially proved in~\cite{MR3038040}.
\end{proof}

\begin{proposition}
    \label{Proposition: hidden-variable team supporting z-independence and locality is equivalent to a hidden-variable team supporting z-independence and strong determinism}
    Let
    \begin{align*}
        \varphi &= \bigwedge_{I\subseteq n}\left( \{x_i \mid i\notin I\}\perp_{\{x_i\mid i\in I\}\vec{z}}\{y_i\mid i\in I\} \right), \\
        \psi &= \bigwedge_{i<n}\left( y_i\perp_{\vec{x}\vec{z}}\{y_j \mid j\neq i \} \right) \text{ and} \\
        \theta &= \bigwedge_{i<n} \=(x_i\vec{z},y_i).
    \end{align*}
    Then $\tilde{\exists}z_0\exists z_1\dots\exists z_{l-1}(\vz\perp\vx\land\varphi\land\psi) \models \tilde{\exists}z_0\exists z_1\dots\exists z_{l-1}(\vz\perp\vx\land\theta)$.
\end{proposition}

In words, any hidden-variable team supporting $\vz$-independence and locality is equivalent (in the sense of Definition~\ref{equivalent}) to a hidden-variable team supporting $\vz$-independence and strong determinism.

\begin{proof}
    Essentially proved in~\cite{MR3038040}.
\end{proof}

We do not know whether the logical consequences of Propositions~\ref{Proposition: Every empirical team is realized by a hidden-variable team supporting strong determinism}--\ref{Proposition: hidden-variable team supporting z-independence and locality is equivalent to a hidden-variable team supporting z-independence and strong determinism} are provable from our axioms. This is a subject of further study. Remark~\ref{Remark: With sort quantifier is valid but without is not} would suggest that stronger axioms for $\tilde\exists$ be required, as the semantic proofs of these propositions make use of the possibility of acquiring values for the hidden variables $\vz$ outside of the values that are possible for $\vx$ and $\vy$.

\subsection{Representation of No-Go Theorems in Team Semantics}

We now turn to the representation of no-go theorems in the foundations of quantum mechanics in terms of team semantics.
These results have fundamental significance, both foundationally, and also for their implications for quantum information and computation.
They rule out the possibility, even in principle, of accounting for quantum behaviour by means of local hidden-variable theories. This shows that the behaviour of quantum mechanics is essentially and unavoidably \emph{non-local}. This non-locality is, on one hand, highly challenging in terms of understanding what quantum mechanics is telling us about the nature of physical reality. On the other hand, this non-classicality opens up the possibility of performing information processing tasks using quantum resources which provably exceed what can be done classically.

How can these no-go theorems be represented in terms of team semantics?
All the results so far are examples of logical consequences and equivalences between team properties (or formulas). 
To prove the no-go theorems, we shall exhibit some \emph{counter-example teams} that demonstrate certain failures of logical consequence. 
These failures will imply the impossibility of describing these teams in terms of local hidden variables. 
As we shall see later, these teams do arise as behaviours of certain quantum mechanical systems.

The original result in this line is the celebrated Bell's Theorem \cite{MR3790629}.
However, that result in its original form is probabilistic in character, and hence not amenable to formalization in terms of team semantics.\footnote{It can be formalized in terms of probabilistic teams, which we will study in the next section.}
Two later constructions, due to Greenberger--Horne--Zeilinger (GHZ)~\cite{MR1081993,liu-et-al} and Hardy~\cite{PhysRevLett.71.1665}, strengthen Bell's result by constructions which work at the purely possibilistic level, and hence can be formalized directly in team semantics.

In Tarski semantics of first-order logic, some existential formulas, such as $\exists z (x=z \land \neg y=z)$, are not valid while others, such as $\exists z (x=z \lor x=y)$, are. To decide which are valid and which are not is particularly simple, especially in the empty vocabulary because first-order logic has in that case elimination of quantifiers. In team semantics where such quantifier elimination is not known to be possible, non-valid existential-conjunctive formulas can be quite complicated, as the examples below show. As we shall see, the no-go results of quantum mechanics give rise to very interesting teams.

We turn firstly to the GHZ construction.

\begin{definition}
    Assume that $n=3$.
    Let $X$ be an empirical team with $\rng(X) = \{0,1\}$. Denote
    \begin{align*}
        P &= \{(0,1,1), (1,0,1), (1,1,0)\}, \\
        Q &= \{(0,0,0), (0,1,1), (1,0,1), (1,1,0)\} \text{ and}\\
        R &= \{(0,0,1), (0,1,0), (1,0,0), (1,1,1)\}.
    \end{align*}
    We say that $X$ is a GHZ team if it satisfies the following conditions.
    \begin{enumerate}
        \item $Q = \{s(\vec{y}) \mid s\in X, s(\vec{x})\in P\}$ and $P \subseteq \{s(\vec{x}) \mid s\in X, s(\vec{y})\in Q\}$.
        \item $R = \{s(\vec{y}) \mid s\in X, s(\vec{x}) = (0,0,0)\}$.
    \end{enumerate}
\end{definition}

The following is a minimal example of a GHZ team:

\begin{center}
    \begin{tabular}{c|cccccc}
        & $x_0$ & $x_1$ & $x_2$ & $y_0$ & $y_1$ & $y_2$ \\
        \hline
        $s_0$ & 0     & 0     & 0     & 0     & 0     & 1     \\
        $s_1$ & 0     & 0     & 0     & 0     & 1     & 0     \\
        $s_2$ & 0     & 0     & 0     & 1     & 0     & 0     \\
        $s_3$ & 0     & 0     & 0     & 1     & 1     & 1     \\
        $s_4$ & 0     & 1     & 1     & 0     & 0     & 0     \\
        $s_5$ & 0     & 1     & 1     & 0     & 1     & 1     \\
        $s_6$ & 1     & 0     & 1     & 1     & 0     & 1     \\
        $s_7$ & 1     & 1     & 0     & 1     & 1     & 0
    \end{tabular}
\end{center}

The following is like Proposition 6.2 in \cite{MR3038040}:
\begin{proposition}
    \label{Proposition: GHZ} The formula $\tilde{\exists}z_0\exists z_1\dots\exists z_{l-1}(\vz\perp\vx\land\varphi\land\psi)$, where
    \begin{align*}
        \varphi &= \bigwedge_{I\subseteq n}\left( \{x_i \mid i\notin I\}\perp_{\{x_i\mid i\in I\}\vec{z}}\{y_i\mid i\in I\} \right) \text{ and} \\
        \psi &= \bigwedge_{i<n}\left( y_i\perp_{\vec{x}\vec{z}}\{y_j \mid j\neq i \} \right),
    \end{align*}
    is not valid, as demonstrated by any GHZ team.
\end{proposition}

In words, no GHZ team can be realized by a hidden-variable team supporting $\vz$-independence and locality.

\begin{proof}
    Proved in~\cite{MR3038040}.
\end{proof}

Next, we consider the Hardy construction. 

\begin{definition}
    Assume that $n=2$.
    Let $X$ be an empirical team with $\rng(X) = \{0,1\}$. Let $s_0,\dots,s_3$ be as in the following table.
    \begin{center}
        \begin{tabular}{c|cccc}
            & $x_0$ & $x_1$ & $y_0$ & $y_1$ \\
            \hline
            $s_0$ & $0$   & $0$   & $0$   & $0$   \\
            $s_1$ & $0$   & $1$   & $0$   & $0$   \\
            $s_2$ & $1$   & $0$   & $0$   & $0$   \\
            $s_3$ & $1$   & $1$   & $1$   & $1$
        \end{tabular}
    \end{center}
    We say that $X$ is a Hardy team if the following hold:
    \begin{enumerate}
        \item $s_0\in X$ but $s_1,s_2,s_3\notin X$, and \label{item: Hardy requirement 1}
        \item for every pair $\vec{a}\in\{0,1\}^2$ there is some $s\in X$ with $s(\vec{x})=\vec{a}$.\footnote{I.e. the team satisfies the so-called \emph{universality atom} $\mathop{\forall}(\vec{x})$.}
    \end{enumerate}
\end{definition}

A minimal example of a Hardy team would be the following:

\begin{center}
    \begin{tabular}{c|cccc}
        & $x_0$ & $x_1$ & $y_0$ & $y_1$ \\
        \hline
        $s_0$  & $0$   & $0$   & $0$   & $0$   \\
        $s_1'$ & $0$   & $1$   & $1$   & $1$   \\
        $s_2'$ & $1$   & $0$   & $1$   & $1$   \\
        $s_3'$ & $1$   & $1$   & $0$   & $0$
    \end{tabular}
\end{center}

The following is like Proposition 6.3 in \cite{MR3038040}:
\begin{proposition}
    \label{Proposition: Hardy}
    No Hardy team can be realized by a hidden-variable team supporting $\vz$-independence and locality.
\end{proposition}

Theorem~\ref{Proposition: Hardy} gives an alternative proof that the formula in Theorem~\ref{Proposition: GHZ} is not valid.

\begin{proof}[Proof of Proposition~\ref{Proposition: Hardy}]
    Proved in~\cite{MR3038040}.
\end{proof}

\paragraph{Discussion.}
We emphasize that the choice of specific counter-example teams for these results is significant, since to apply the results to quantum mechanics, we must show that these specific teams can be realized in quantum mechanics. We shall discuss quantum-realizability in Section~5.

Our reason for discussing both the GHZ and Hardy constructions is that they exhibit different ``strengths'' of non-locality. The GHZ construction exhibits a maximal form of non-locality; note that it requires at least a tripartite system (the construction can be generalized straightforwardly to $n$-partite systems for $n > 3$). The Hardy construction only requires a bipartite system, but exhibits a weaker form of non-locality. Both are stronger than the original probabilistic form of Bell's theorem in \cite{MR3790629}. For a detailed discussion of this hierarchy, see \cite{Abramsky_2011}.

The Kochen--Specker construction~\cite{MR0219280} gives an example of an empirical model which cannot be realized by any hidden-variable model supporting $\vz$-independence and parameter independence, providing a result even stronger than Propositions \ref{Proposition: GHZ} and~\ref{Proposition: Hardy}. However, the model in question does not quite fit our framework, which deals with so-called Bell-type scenarios. A sheaf-theoretic framework is given in~\cite{Abramsky_2011}, which subsumes both non-locality arguments for Bell scenarios, and contextuality proofs exemplified by  the Kochen--Specker construction.
This
could be translated into the language of team semantics using some version of the polyteam semantics of~\cite{MR3763187}.
A polyteam is essentially a set of teams. A simple example is the combination of a team describing lecture courses in an academic department, with variables for course name and course lecturer, and a different personnel team with variables for lectures and their office hours. Polyteams seem to be a suitable framework which allows a team semantics analysis of the sheaf-theoretic framework of \cite{Abramsky_2011}, and hence enables a treatment of the
Kochen-Specker Theorem and related results.
We shall leave the elaboration of this idea to future work.

\section{Independence Logic in Probabilistic and $K$-Team Semantics}\label{sec:probabilistic-teams}

We show that the probabilistic framework of Brandenburger and Yanofsky~\cite{MR2443068} can be translated to the language of probabilistic team semantics exactly the same way that the purely relational framework of~\cite{MR3038040} translates to ordinary team semantics. All the formal proofs of the relational setting turn out to be sound also in the probabilistic framework, as we are able to prove the validity of our axioms also in this setting, and even in $K$-team semantics for $K$ a positive, commutative and multiplicatively cancellative semiring. A semiring is multiplicatively cancellative if $ab = ac$ implies $b = c$ whenever $a \neq 0$. A result of Hannula~\cite{hannula2023conditional} shows that the so-called semigraphoid axioms are sound for such $K$, and Proposition~\ref{Proposition: Axioms of independence atom follow from semigraphoid axioms} shows that our axioms are all provable from them.

\subsection{Probabilistic Teams}

So far we have only been looking at possibilistic (i.e. two-valued relational) versions of the independence notions of quantum physics while these notions are usually taken to be probabilistic. To be able to discuss the probabilistic notions from the point of view of team semantics, we need a suitable framework. For this, we consider \emph{probabilistic team semantics}.

The study of a probabilistic variant of independence logic was first done in a multiteam setting in~\cite{MR3833654}. Prior to that, multiteams were studied in~\cite{MR3423958},~\cite{MR3682843} and~\cite{MR3587733}. \emph{Probabilistic teams} were then introduced by Durand et al. in~\cite{MR3802381} as a way to generalize multiteams, and further investigated in~\cite{MR3964038}. They can be thought of as a special case of \emph{measure teams}, another approach to probabilities in team semantics given in~\cite{MR3682843}.

It should be observed that we are not introducing probabilistic logic in the sense of formulas having probabilities. In our approach, only the teams are probabilistic and the logic is two-valued.

\begin{definition}
    Let $A$ be a finite set and $V$ a finite set of variables. A \emph{probabilistic team}, with \emph{variable domain} $V$ and \emph{value domain} $A$, is a probability distribution $\X\colon A^V \to [0,1]$.

    Let $\A$ be a (possibly many-sorted) finite structure, and let $X$ be the full team of $\A$ with domain $V$. Then a probabilistic team of $\A$ with variable domain $V$ is any distribution $\X\colon X\to[0,1]$.
\end{definition}

Ordinary teams of size $k$ can be seen as probabilistic teams by giving each assignment in the team probability $1/k$ and assignments not in the team probability zero. This idea of treating ordinary teams as uniformly distributed probabilistic teams generalizes to multiteams i.e. teams in which assignments can have several occurrences. Then an assignment which occurs $m$ times is given the probability $m/k$. In fact, it is not difficult to see that any probabilistic team with rational probabilities corresponds to a multiteam.

We will call a probabilistic team with variable domain $\Vm\cup\Vo$ a \emph{probabilistic empirical team} and a probabilistic team with variable domain $\Vm\cup\Vo\cup\Vh$ a \emph{probabilistic hidden-variable team}.

\begin{definition}
    We say that a team $X$ is the \emph{possibilistic collapse} of a probabilistic team $\X$ if for any assignment $s$, $s\in X$ if and only if $\X(s)>0$.
\end{definition}

Note that if $\X$ is a probabilistic team of $\A$, then the possibilistic collapse $X$ is a team of $\A$.

We may consider a probabilistic team a ``probabilistic realization'' of its collapse. Of course, an ordinary team has a multitude of such probabilistic realizations.

The possibilistic collapse of a probabilistic team $\X$ is also called the \emph{support} of $\X$ and denoted by $\support\X$.

We denote by $\abs{\X_{\vec{u}=\vec{a}}}$ the number
\[
    \sum_{\substack{s(\vec{u})=\vec{a} \\ s\in\support\X}}\X(s),
\]
i.e. the marginal probability of the variable tuple $\vec{u}$ having the value $\vec{a}$ in $\X$.

Next we define the probabilistic analogue for an empirical team being realized by a hidden-variable team.

\begin{definition}
    A probabilistic hidden-variable team $\Y$ \emph{realizes} a probabilistic empirical team $\X$ if for all $\vec{a}$ and $\vec{b}$ we have
    \begin{enumerate}
        \item $\abs{\X_{\vec{x}=\vec{a}}} = 0$ if and only if $\abs{\Y_{\vec{x}=\vec{a}}} = 0$, and
        \item $\abs{\X_{\vec{x}\vec{y}=\vec{a}\vec{b}}}\cdot\abs{\Y_{\vec{x}=\vec{a}}} = \abs{\Y_{\vec{x}\vec{y}=\vec{a}\vec{b}}}\cdot\abs{\X_{\vec{x}=\vec{a}}}$.
    \end{enumerate}
    $\Y$ \emph{uniformly realizes} $\X$ if in addition $\abs{\X_{\vec{x}=\vec{a}}}=\abs{\Y_{\vec{x}=\vec{a}}}$ for all $\vec{a}$.
\end{definition}

The intuition behind the definition is the following: $\Y$ realizes $\X$ if the probability of the event ``$s(\vec{x})=\vec{a}\,$'' is non-zero in both teams exactly the same time, and in case the probability indeed is non-zero, the probability of the event ``$s(\vec{y})=\vec{b}\,$'', conditional to ``$s(\vec{x})=\vec{a}\,$'', is the same in both teams. Uniform realizability appears to be a stronger concept than realizability, but we do not have an example for that yet.

\begin{proposition}
    \label{Proposition: Probabilistic realization implies possibilistic realization}
    If $\Y$ realizes $\X$, then the possibilistic collapse of $\Y$ realizes the possibilistic collapse of $\X$.
\end{proposition}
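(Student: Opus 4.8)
The plan is to unwind the two notions of realization---probabilistic and possibilistic---and reduce everything to statements about the positivity of marginal probabilities. Write $X = \support\X$ and $Y = \support\Y$ for the possibilistic collapses. Since $X$ is an empirical team, every assignment $s \in X$ has domain exactly $\Vm \cup \Vo$ and is therefore uniquely determined by the tuple $s(\vec{x}\vec{y})$; consequently, for $\vec{a}\vec{b} = s(\vec{x}\vec{y})$ the marginal $\abs{\X_{\vec{x}\vec{y}=\vec{a}\vec{b}}}$ collapses to the single summand $\X(s)$. Hence $s \in X$ holds if and only if $\abs{\X_{\vec{x}\vec{y}=\vec{a}\vec{b}}} > 0$. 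On the hidden-variable side, there exists $s' \in Y$ agreeing with $s$ on $\vec{x}\vec{y}$ if and only if some assignment of positive $\Y$-probability projects to $\vec{a}\vec{b}$, i.e. if and only if $\abs{\Y_{\vec{x}\vec{y}=\vec{a}\vec{b}}} > 0$, since a sum of non-negative reals is positive exactly when one summand is.

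With these two reformulations in hand, the possibilistic realization clause ``$s \in X \iff \exists s' \in Y$ agreeing with $s$ on $\vec{x}\vec{y}$'' becomes precisely the biconditional $\abs{\X_{\vec{x}\vec{y}=\vec{a}\vec{b}}} > 0 \iff \abs{\Y_{\vec{x}\vec{y}=\vec{a}\vec{b}}} > 0$, ranging over all $\vec{a},\vec{b}$ (its contrapositive handles the $s \notin X$ case automatically). So the whole proof reduces to deriving this positivity equivalence from the defining clauses (i) and (ii) of probabilistic realization.

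The key step is to feed clause (ii), $\abs{\X_{\vec{x}\vec{y}=\vec{a}\vec{b}}} \cdot \abs{\Y_{\vec{x}=\vec{a}}} = \abs{\Y_{\vec{x}\vec{y}=\vec{a}\vec{b}}} \cdot \abs{\X_{\vec{x}=\vec{a}}}$, together with the observation that a joint marginal being positive forces the corresponding $\vec{x}$-marginal to be positive, since the latter dominates the former. Suppose $\abs{\X_{\vec{x}\vec{y}=\vec{a}\vec{b}}} > 0$; then $\abs{\X_{\vec{x}=\vec{a}}} > 0$, so by clause (i) also $\abs{\Y_{\vec{x}=\vec{a}}} > 0$, making the left-hand side of (ii) a product of two positive numbers. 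The right-hand side is then positive, and since $\abs{\X_{\vec{x}=\vec{a}}}$ is a finite probability, we must have $\abs{\Y_{\vec{x}\vec{y}=\vec{a}\vec{b}}} > 0$. The reverse implication is symmetric, starting from $\abs{\Y_{\vec{x}\vec{y}=\vec{a}\vec{b}}} > 0$ and invoking clause (i) in the other direction.

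I do not expect a serious obstacle; the content is entirely in matching the right marginals to support membership. The one point to handle carefully is the remark that for an empirical team the $\vec{x}\vec{y}$-marginal is literally a single probability value---because the domain contains no further variables---which is what lets positivity of that marginal coincide with membership in the collapse, and the dual care that on the hidden-variable side the marginal genuinely aggregates over the hidden coordinates, so its positivity witnesses existence of a realizing assignment, exactly as possibilistic realization demands.
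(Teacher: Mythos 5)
Your proposal is correct and follows essentially the same route as the paper's proof: both reduce possibilistic realization to positivity of the $\vec{x}\vec{y}$-marginals, use the domination $\abs{\X_{\vec{x}=\vec{a}}}\geq\abs{\X_{\vec{x}\vec{y}=\vec{a}\vec{b}}}$ together with clause (i) to transfer positivity of the $\vec{x}$-marginal, and then use clause (ii) to transfer positivity of the joint marginal (the paper divides by $\abs{\X_{\vec{x}=\vec{a}}}$ where you argue with products of non-negative reals, a cosmetic difference). Your explicit observation that the empirical-team marginal is a single summand, and your symmetric treatment of the converse direction which the paper leaves as ``similar'', are just slightly more detailed renderings of the same argument.
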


Proposition~\ref{Proposition: Probabilistic realization implies possibilistic realization} says that one obtains the same team by first projecting away hidden variables and then taking the possibilistic collapse as one gets by first taking the possibilistic collapse and then projecting away the hidden variables, i.e. the diagram in Figure~\ref{Figure: Probabilistic realization implies possibilistic realization} commutes.

\begin{figure}[htb]
    \begin{center}
        \begin{tikzpicture}[scale=1]
            \draw[black!33] (-3.5,0) -- (-6,4);
            \draw[black!33] (-3.5,0) -- (-1,4);
            \draw[black!33] (-6,4) -- (-1,4);
            \draw (-3.5,4.5) node[above] {Prob. teams};
            \draw[black!33] (3.5,0) -- (6,4);
            \draw[black!33] (3.5,0) -- (1,4);
            \draw[black!33] (6,4) -- (1,4);
            \draw (3.5,4.5) node[above] {Teams};
            \fill[black] (-4,3.5) circle (0.075) edge[->, bend left = 10, shorten >= 4pt] (4,3.5) edge[->, shorten >= 4pt] (-3.5,1.5) node[left] {$\Y$};
            \fill[black] (-3.5,1.5) circle (0.075) edge[->, bend left = 10, shorten >= 4pt] (3.5,1.5) node[below] {$\X$};
            \draw (0, 4) node[above] {\footnotesize collapse};
            \draw (0, 2) node[above] {\footnotesize collapse};
            \draw (-1.8,2.75) node[left] {\footnotesize projection};
            \draw (1.85,2.75) node[right] {\footnotesize projection};
            \fill[black] (4,3.5) circle (0.075) edge[->, shorten >= 4pt] (3.5,1.5) node[right] {$Y$};
            \fill[black] (3.5,1.5) circle (0.075) node[below] {$X$};
        \end{tikzpicture}
    \end{center}
    \caption{Probabilistic realization implies possibilistic realization.}
    \label{Figure: Probabilistic realization implies possibilistic realization}
\end{figure}
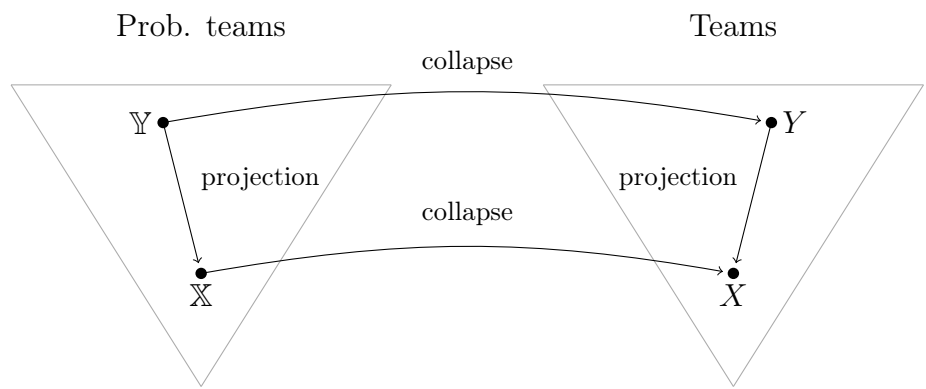

\begin{proof}[Proof of Proposition~\ref{Proposition: Probabilistic realization implies possibilistic realization}]
    Suppose that $\Y$ realizes $\X$, and denote by $Y$ and $X$ the respective possibilistic collapse. In order to prove that $Y$ realizes $X$, we need to show that for all assignments $s$,
    \[
        s\in X \iff \exists s'\in Y \bigwedge_{i<n}(s'(x_i)=s(x_i) \land s'(y_i)=s(y_i)).
    \]
    We show only one direction, the other one is similar.

    Suppose that $s\in X$. Denote $\vec{a}=s(\vec{x})$ and $\vec{b}=s(\vec{y})$. The aim is to show that there is some $s'\in Y$ with $s'(\vec{x})=\vec{a}$ and $s'(\vec{y})=\vec{b}$. Since $X$ is the possibilistic collapse of $\X$ and $s\in X$, we have $\X(s)>0$, and as in addition $s(\vec{x})=\vec{a}$, we obtain $\abs{\X_{\vec{x}=\vec{a}}} > 0$. Thus, as $\Y$ realizes $\X$,  $\abs{\Y_{\vec{x}=\vec{a}}} > 0$. Similarly, $\abs{\X_{\vec{x}\vec{y}=\vec{a}\vec{b}}} > 0$, and as
    \[
        \abs{\Y_{\vec{x}\vec{y}=\vec{a}\vec{b}}} = \frac{\abs{\X_{\vec{x}\vec{y}=\vec{a}\vec{b}}}\cdot\abs{\Y_{\vec{x}=\vec{a}}}}{\abs{\X_{\vec{x}=\vec{a}}}},
    \]
    also $\abs{\Y_{\vec{x}\vec{y}=\vec{a}\vec{b}}} > 0$. This means that there is some $s'$ with $\Y(s') > 0$ and $s'(\vec{x}\vec{y})=\vec{a}\vec{b}$. Since $Y$ is the possibilistic collapse of $\Y$, this means that $s'\in Y$, as desired.
\end{proof}

\subsection{Probabilistic independence logic}\label{subsec:probabilistic-independence-logic}

We now present the semantics of the \emph{probabilistic (conditional) independence atom} $\vec{u}\probind_{\vec{v}}\vec{w}$, as defined in~\cite{MR3802381}.

\begin{definition}
    Let $\A$ be a structure and $\X$ a probabilistic team of $\A$, and let $\vec{u}$, $\vec{v}$ and $\vec{w}$ be tuples of variables. Then $\X$ satisfies the formula $\vec{u}\probind_{\vec{v}}\vec{w}$ in $\A$, in symbols $\A\models_{\X}\vec{u}\probind_{\vec{v}}\vec{w}$, if for all $\vec{a}$, $\vec{b}$ and $\vec{c}$,
    \[
        \abs{\X_{\vec{u}\vec{v}=\vec{a}\vec{b}}} \cdot \abs{\X_{\vec{v}\vec{w}=\vec{b}\vec{c}}} = \abs{\X_{\vec{u}\vec{v}\vec{w}=\vec{a}\vec{b}\vec{c}}} \cdot \abs{\X_{\vec{v}=\vec{b}}}.
    \]
\end{definition}

The intention behind the atom is to capture the notion of conditional independence in probability theory: denoting a probability measure by $p$, two events $A$ and $B$ are conditionally independent over an event $C$ (assuming $p(C)>0$) if
\[
    p(A\mid C)p(B\mid C) = p(A\cap B\mid C).
\]
Recalling that $p(D\mid C) = p(D\cap C)/p(C)$, we can multiply both sides of the equation by $p(C)^2$ and obtain
\[
    p(A\cap C)p(B\cap C) = p(A\cap B\cap C)p(C),
\]
which is exactly what the probabilistic dependence atom expresses. In the case when $p(C)=0$, both sides of the new equation are $0$, so in that case the independence atom is vacuously true.

Exactly the same way as in ordinary independence logic, we can define the \emph{probabilistic dependence atom} $\=(\vec{v},\vec{w})$ via the independence atom:

\begin{definition}
    Let $\vec{v}$ and $\vec{w}$ tuples of variables. Then by $\=(\vec{v},\vec{w})$ we mean the formula $\vec{w}\probind_{\vec{v}}\vec{w}$.
    By $\=(\vec{w})$ we mean the formula $\vec{w}\probind\vec{w}$ and call it the \emph{probabilistic constancy atom}.
\end{definition}

The syntax of probabilistic independence logic is the same as the syntax of ordinary independence logic, except that we use the symbol $\probind$ instead of $\perp$. Next we present the semantics of more complex formulas of probabilistic independence logic, as defined in~\cite{MR3802381}. First we define the \emph{$r$-scaled union} $\X\sqcup_r\Y$ of two probabilistic teams $\X$ and $\Y$ with the same variable and value domain, for $r\in[0,1]$, by setting
\[
    (\X\sqcup_r\Y)(s) \coloneqq r\X(s) + (1-r)\Y(s).
\]
We define the (``duplicated'') team $\X[A_{\sort(v)}/v]$ by setting
\[
    \X[A_{\sort(v)}/v](s(a/v)) \coloneqq \sum_{\substack{t\in\support\X \\ t(a/v)=s(a/v)}}\frac{\X(t)}{\abs{A_{\sort(v)}}}
\]
for all $a\in A_{\sort(v)}$. If $v$ is a fresh variable, i.e. not in the variable domain of $\X$, then $\X[A/v](s(a/v))=\X(s)/\abs{A_{\sort(v)}}$ for all $s\in\support\X$. Finally, given a function $F$ from the set $\support\X$ to the set of all probability distributions on $A_{\sort(v)}$, we define the (``supplemented'') team $\X[F/v]$ by setting
\[
    \X[F/v](s(a/v)) \coloneqq \sum_{\substack{t\in\support\X \\ t(a/v)=s(a/v)}} \X(t)F(t)(a)
\]
for all $a\in A_{\sort(v)}$. Again, if $v$ is fresh, then $\X[F/v](s(a/v)) = \X(s)F(s)(a)$. It is easy to see that both duplication and supplementation give rise to well-defined probabilistic teams.

\begin{definition}
    Let $\A$ be a structure and $\X$ a probabilistic team of $\A$. Then
    \begin{enumerate}
        \item $\A\models_{\X} \alpha$ for a first-order atomic or negated atomic formula $\alpha$ if $\A\models_X\alpha$, where $X$ is the possibilistic collapse of $\X$.
        \item $\A\models_{\X}\varphi\land\psi$ if $\A\models_{\X}\varphi$ and $\A\models_{\X}\psi$.
        \item $\A\models_{\X}\varphi\lor\psi$ if $\A\models_{\Y}\varphi$ and $\A\models_{\Z}\psi$ for some probabilistic teams $\Y$ and $\Z$, and $r\in[0,1]$ such that $\X=\Y\sqcup_r\Z$.
        \item $\A\models_{\X}\forall v\varphi$ if $\A\models_{\X[A_{\sort(v)}]}\models\varphi$.
        \item $\A\models_{\X}\exists v\varphi$ if $\A\models_{\X[F/v]}\models \varphi$ for some function $F\colon\support\X\to\{p\in[0,1]^{A_{\sort(v)}} \mid \text{$p$ is a probability distribution}\}$.
        \item $\A\models_{\X}\tilde{\forall}v\varphi$ if $\B\models_{\X}\forall v\varphi$ for all expansions $\B$ of $\A$ by the sort $\sort(v)$.
        \item $\A\models_{\X}\tilde{\exists}v\varphi$ if $\B\models_{\X}\exists v\varphi$ for some expansion $\B$ of $\A$ by the sort $\sort(v)$.
    \end{enumerate}
\end{definition}

Again, when it is clear what is meant, we write $\X\models\varphi$ instead of $\A\models_{\X}\varphi$.

By definition, first-order atomic formulas are satisfied by a probabilistic team if and only if the underlying possibilistic collapse satisfies them. This property is in the multiteam setting of~\cite{MR3833654} called weak flatness:

\begin{definition}
    We say that a formula $\varphi$ of probabilistic independence logic is \emph{weakly flat} if for all probabilistic teams $\X$, we have
    \[
        \X\models\varphi \iff \support\X\models\varphi^*,
    \]
    where $\varphi^*$ is the formula of independence logic obtained from $\varphi$ by replacing each occurrence of the symbol $\probind$ by the symbol $\perp$.
    A sublogic of probabilistic independence logic is weakly flat if every formula of the logic is.
\end{definition}

Later on, we simply write $\varphi$ instead of $\varphi^*$ whenever it is obvious what is meant.

\begin{lemma}
    The probabilistic dependence atom is weakly flat.
\end{lemma}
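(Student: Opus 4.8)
The plan is to unfold both semantics and check that they agree once one reads the marginals over the support. By definition the probabilistic dependence atom $\=(\vec{v},\vec{w})$ abbreviates $\vec{w}\probind_{\vec{v}}\vec{w}$, so $\X\models\=(\vec{v},\vec{w})$ means that for all $\vec{a},\vec{b},\vec{c}$,
\[
    \abs{\X_{\vec{w}\vec{v}=\vec{a}\vec{b}}}\cdot\abs{\X_{\vec{v}\vec{w}=\vec{b}\vec{c}}} = \abs{\X_{\vec{w}\vec{v}\vec{w}=\vec{a}\vec{b}\vec{c}}}\cdot\abs{\X_{\vec{v}=\vec{b}}}.
\]
The first thing I would observe is that, because the tuple $\vec{w}$ is repeated, the joint marginal $\abs{\X_{\vec{w}\vec{v}\vec{w}=\vec{a}\vec{b}\vec{c}}}$ equals $\abs{\X_{\vec{w}\vec{v}=\vec{a}\vec{b}}}$ when $\vec{a}=\vec{c}$ and is $0$ otherwise. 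This splits the defining condition into two families of equations according to whether $\vec{a}=\vec{c}$.

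Next I would read off what each family says. For $\vec{a}\neq\vec{c}$ the equation becomes $\abs{\X_{\vec{w}\vec{v}=\vec{a}\vec{b}}}\cdot\abs{\X_{\vec{w}\vec{v}=\vec{c}\vec{b}}}=0$, which states precisely that for each fixed $\vec{b}$ at most one value of $\vec{w}$ occurs with positive probability alongside $\vec{v}=\vec{b}$; that is, on $\support\X$ the value of $\vec{v}$ functionally determines the value of $\vec{w}$. For $\vec{a}=\vec{c}$ the equation reads $\abs{\X_{\vec{w}\vec{v}=\vec{a}\vec{b}}}^2 = \abs{\X_{\vec{w}\vec{v}=\vec{a}\vec{b}}}\cdot\abs{\X_{\vec{v}=\vec{b}}}$, which, dividing by $\abs{\X_{\vec{w}\vec{v}=\vec{a}\vec{b}}}$ whenever it is positive, says that once $(\vec{a},\vec{b})$ occurs, all the mass of $\vec{v}=\vec{b}$ sits on $\vec{w}=\vec{a}$.

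I would then compare with the possibilistic side. Writing $X=\support\X$, the team $X$ satisfies the possibilistic atom $\=(\vec{v},\vec{w})$ exactly when for all $s,s'\in X$ with $s(\vec{v})=s'(\vec{v})$ one has $s(\vec{w})=s'(\vec{w})$, i.e. the same functional determination on $X$. To close the equivalence I would argue in two short directions. If $X\models\=(\vec{v},\vec{w})$, the $\vec{a}\neq\vec{c}$ equations hold at once, and for $\vec{a}=\vec{c}$ I would note that functional determination makes $\vec{a}$ the only value of $\vec{w}$ contributing to $\abs{\X_{\vec{v}=\vec{b}}}=\sum_{\vec{a}'}\abs{\X_{\vec{w}\vec{v}=\vec{a}'\vec{b}}}$, whence $\abs{\X_{\vec{v}=\vec{b}}}=\abs{\X_{\vec{w}\vec{v}=\vec{a}\vec{b}}}$, giving the required equation. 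Conversely, if the probabilistic atom holds and $s,s'\in X$ agree on $\vec{v}$ with value $\vec{b}$, then setting $\vec{a}=s(\vec{w})$ and $\vec{c}=s'(\vec{w})$ both $\abs{\X_{\vec{w}\vec{v}=\vec{a}\vec{b}}}$ and $\abs{\X_{\vec{w}\vec{v}=\vec{c}\vec{b}}}$ are positive, so the $\vec{a}\neq\vec{c}$ equation forces $\vec{a}=\vec{c}$, i.e. $s(\vec{w})=s'(\vec{w})$.

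I do not expect a genuine obstacle here: the only thing that needs care is the bookkeeping forced by the repeated tuple $\vec{w}$ — recognising the collapse of $\abs{\X_{\vec{w}\vec{v}\vec{w}=\vec{a}\vec{b}\vec{c}}}$ and the resulting case split — together with the convention that all marginals are summed over $\support\X$. Once those are in place the equivalence is a direct translation between ``at most one value of $\vec{w}$ per value of $\vec{v}$'' on both the probabilistic and the possibilistic level.
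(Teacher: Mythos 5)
Your proof is correct. The argument is complete: the collapse $\abs{\X_{\vec{w}\vec{v}\vec{w}=\vec{a}\vec{b}\vec{c}}}=\abs{\X_{\vec{w}\vec{v}=\vec{a}\vec{b}}}$ for $\vec{a}=\vec{c}$ (and $=0$ otherwise) is the right observation, the resulting case split correctly identifies the probabilistic atom with ``$\vec{v}$ functionally determines $\vec{w}$ on $\support\X$'', and both directions of the equivalence with the possibilistic atom go through as you write them.

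Your route differs from the paper's, which gives no computation at all: it simply asserts that the proof of weak flatness given in the multiteam setting of Durand--Hannula--Kontinen--Meier--Virtema carries over to probabilistic teams. What your approach buys is self-containedness and a small but real gain in rigour: multiteams correspond to probabilistic teams with \emph{rational} probabilities, so invoking the multiteam proof implicitly requires checking that nothing in it depends on rationality, whereas your marginal computations are valid for arbitrary real-valued distributions from the start. What the citation-based approach buys is brevity and a uniform treatment (the same reference also covers weak flatness of the other atoms the paper needs). Your explicit case analysis additionally makes visible \emph{why} the dependence atom is degenerate among independence atoms --- the repeated tuple forces the joint marginal to collapse --- which is the structural fact underlying all flatness phenomena here.
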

\begin{proof}
    The proof given in the multiteam setting in~\cite{MR3833654} works also in the probabilistic team setting.
\end{proof}

It turns out that one direction of the equivalence in the definition of weak flatness always holds:

\begin{proposition}
    \label{Proposition: Truth in independence logic is preserved by possibilistic collapse}
    Let $\X$ be a probabilistic team and $\varphi$ a formula of independence logic. Denote by $X$ the possibilistic collapse of $\X$. Then
    \[
        \X\models\varphi \implies X\models\varphi.
    \]
\end{proposition}
We omit the proof, as it is available in~\cite{2021arXiv210210931A}.

\begin{lemma}\label{Lemma: Logical operations preserve weak flatness}
    Logical operations of Definition~\ref{Definition: Syntax of independence logic} preserve weak flatness.
\end{lemma}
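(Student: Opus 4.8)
The plan is to argue by induction on the structure of $\varphi$, treating each logical operation of Definition~\ref{Definition: Syntax of independence logic} in turn under the inductive hypothesis that every immediate subformula is weakly flat. Throughout I would abbreviate $X=\support\X$ for the possibilistic collapse and use two standing facts: that the empty team satisfies every formula of independence logic (an easy side induction), and the convention of~\cite{MR3802381} that a scaled union may have an empty-support component, which then satisfies every formula. The real content is carried by a handful of identities relating the collapse to the team-forming operations, which I would record first:
\[
    \support(\X[A_{\sort(v)}/v]) = (\support\X)[A_{\sort(v)}/v], \qquad \support(\X[F/v]) = (\support\X)[(\support\circ F)/v].
\]
Together with $\support(\Y\sqcup_r\Z)=\support\Y\cup\support\Z$ for $r\in(0,1)$ (and $\support\Z$, $\support\Y$ at the boundary values $r=0,1$), these are immediate from the defining formulas for the duplicated, supplemented and scaled teams: all summands are non-negative, so no cancellation can occur, and a coordinate $s(a/v)$ receives positive mass exactly when some $t\in\support\X$ agreeing with it off $v$ contributes positively.

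The conjunction and the two sort quantifiers are then routine. For $\varphi=\psi\land\theta$ one simply chains $\X\models\psi\land\theta \iff \X\models\psi\text{ and }\X\models\theta \iff X\models\psi^*\text{ and }X\models\theta^* \iff X\models(\psi\land\theta)^*$, the middle step being the inductive hypothesis. For $\tilde{\forall}v\psi$ and $\tilde{\exists}v\psi$ I would fix an expansion $\B$ and observe that passing from $\A$ to $\B$ does not change the collapse $X$ (the new sort adds no assignments over the old variables); running the universal (resp. existential) argument below inside $\B$ then yields $\B\models_\X \forall v\psi \iff \B\models_X \forall v\psi^*$ using only weak flatness of $\psi$, and quantifying over all $\B$ (resp. over some $\B$) gives the claim.

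For the ordinary quantifiers I would use the support identities. The universal case is direct: $\X\models\forall v\psi \iff \X[A_{\sort(v)}/v]\models\psi \iff \support(\X[A_{\sort(v)}/v])\models\psi^* \iff X[A_{\sort(v)}/v]\models\psi^* \iff X\models\forall v\psi^*$. For the existential case the point is that probabilistic and possibilistic supplementation functions correspond: from any $F$ one reads off $G=\support\circ F\colon X\to\Pow(A_{\sort(v)})\setminus\{\emptyset\}$, and conversely any such $G$ is realised by the $F$ sending each $s$ to the uniform distribution on $G(s)$. Combined with the second support identity and the inductive hypothesis, this matches the two existential quantifiers in both directions.

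The disjunction is the crux, and the step I expect to be the main obstacle. The forward direction is painless: from a witnessing split $\X=\Y\sqcup_r\Z$ with $\Y\models\psi$ and $\Z\models\theta$, the inductive hypothesis gives $\support\Y\models\psi^*$ and $\support\Z\models\theta^*$, and the third identity exhibits $\support\Y,\support\Z$ as a possibilistic split of $X$ (the boundary cases $r\in\{0,1\}$ being absorbed by the empty-team property, taking $Z'=\emptyset$ or $Y'=\emptyset$). The backward direction must instead manufacture a probabilistic split from a possibilistic one $X=Y\cup Z$ with $Y\models\psi^*$, $Z\models\theta^*$: I would redistribute the mass of $\X$ by sending, for each $s\in X$, the whole of $\X(s)$ to the $\Y$-part when $s\in Y\setminus Z$, the whole to the $\Z$-part when $s\in Z\setminus Y$, and half to each when $s\in Y\cap Z$; normalising the two parts by their total masses $r$ and $1-r$ produces teams $\Y,\Z$ with $\support\Y=Y$, $\support\Z=Z$ and $\X=\Y\sqcup_r\Z$, so that the inductive hypothesis delivers $\Y\models\psi$ and $\Z\models\theta$. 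The delicate points — where I would spend the most care — are precisely the degenerate cases in which $Y$ or $Z$ is empty, forcing one part to have zero total mass and empty support; it is exactly here that the empty-component convention for scaled unions is needed to keep the construction well defined, and without it the equivalence fails (as witnessed already by formulas, such as $x\neq x$, that no probabilistic team can satisfy).
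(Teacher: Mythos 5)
Your proof is correct, and its two substantive constructions are exactly the paper's: for disjunction you redistribute the mass of $\X$ the same way (full weight on $X_i\setminus X_{1-i}$, half weight on the overlap, then normalise; your $r$ is the paper's $p_0+q/2$ and your normalised parts are the paper's $\X_0,\X_1$), and for the existential quantifier you use the same correspondence between a possibilistic supplementation function and the probabilistic supplementation by uniform distributions on its values. Two differences are worth recording. First, the paper proves only the implication from the collapse to the probabilistic team: the converse direction holds for \emph{all} formulas by Proposition~\ref{Proposition: Truth in independence logic is preserved by possibilistic collapse}, so your support identities and the ``forward'' halves of your equivalences are replaced there by a single citation; your argument is self-contained (it needs the converse only for weakly flat subformulas) at the cost of redoing that induction, while the paper's is shorter but rests on a forward reference. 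Second, you are more careful than the paper about the degenerate disjunction cases, and rightly so: the paper's construction tacitly assumes both possibilistic parts are nonempty (otherwise $p_i+q/2=0$ and its $\X_i$ is undefined), and under the paper's literal definitions --- a probabilistic team is a probability distribution, hence has nonempty support --- the lemma actually fails at this point: $(\neg\, x=x)\lor(x=x)$ is satisfied by every team but by no probabilistic team, since no probabilistic team satisfies $\neg\, x=x$. Your diagnosis that an empty-team (or empty-component) convention is needed, under which such a component satisfies every formula, is the right repair; note only that no such convention appears in this paper's definitions (whatever its status in the cited literature), so you are importing an assumption rather than quoting one --- but it is an assumption the paper's own proof needs just as much.
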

\begin{proof}
    Suppose that $\varphi$ and $\psi$ are weakly flat. We then show that the formulas $\varphi\land\psi$, $\varphi\lor\psi$, $\exists v\varphi$, $\forall v\varphi$, $\tilde{\exists}v\varphi$ and $\tilde{\forall}v\varphi$ are weakly flat. Let $\A$ be a structure and $\X$ a probabilistic team of $\A$, and let $X$ be the possibilistic collapse of $\X$. Note that to show that a formula $\theta$ is weakly flat, we only need to show that
    \[
        \A\models_X\theta \implies \A\models_{\X}\theta,
    \]
    as the other direction follows from Proposition~\ref{Proposition: Truth in independence logic is preserved by possibilistic collapse}.

    \begin{enumerate}
        \item The case for conjunction is trivial.

        \item Suppose that $X\models\varphi\lor\psi$. Then there are $X_0$ and $X_1$ such that $X_0\models\varphi$ and $X_1\models\psi$ and $X=X_0\cup X_1$. Let $\X_i$ be a probabilistic team with collapse $X_i$ such that
        \[
            \X_i(s) =
            \begin{cases}
                \X(s)/(2p_i+q) & \text{if $s\in X_0\cap X_1$,} \\
                \X(s)/(p_i+q/2) & \text{otherwise,}
            \end{cases}
        \]
        where $p_i=\sum_{s\in X_i\setminus X_{1-i}}\X(s)$ and $q = \sum_{s\in X_0\cap X_1}\X(s)$.
        As $\varphi$ and $\psi$ are weakly flat, $\X_0\models\varphi$ and $\X_1\models\psi$. Now, if $s\in X_0\setminus X_1$, then
        \begin{align*}
            \X(s) &= \frac{(p_0+q/2)\X(s)}{p_0+q/2} = (p_0+q/2)\X_0(s) \\
            &= (\X_0\sqcup_{p_0 + q/2}\X_1)(s),
        \end{align*}
        and if $s\in X_1\setminus X_0$, then
        \begin{align*}
            \X(s) &= \frac{(p_1+q/2)\X(s)}{p_1+q/2} = (p_1+q/2)\X_1(s) = (1 - (p_0 + q/2))\X_1(s) \\
            &= (\X_0\sqcup_{p_0 + q/2}\X_1)(s),
        \end{align*}
        and if $s\in X_0\cap X_1$, then
        \begin{align*}
            \X(s) &= \frac{\X(s)}{2} + \frac{\X(s)}{2} = \frac{(p_0 + q/2)\X(s)}{2(p_0 + q/2)} + \frac{(p_1 + q/2)\X(s)}{2(p_1 + q/2)} \\
            &= \frac{(p_0 + q/2)\X(s)}{2p_0 + q} + \frac{(p_1 + q/2)\X(s)}{2p_1 + q} \\
            &= (p_0 + q/2)\X_0(s) + (p_1 + q/2)\X_1(s) \\
            &= (p_0 + q/2)\X_0(s) + (1 - (p_0 + q/2))\X_1(s) \\
            &= (\X_0\sqcup_{p_0 + q/2}\X_1)(s).
        \end{align*}
        Hence $\X=\X_0\sqcup_{p_0 + q/2}\X_1$ and thus $\X\models\varphi\lor\psi$.
    
        \item Suppose that $X\models\exists v\varphi$. Then there is a function $F\colon X\to A_{\sort(v)}$ such that $X[F/v]\models\varphi$. Define a function $G\colon X\to\{p\in[0,1]^{A_{\sort(v)}} \mid \text{$p$ is a distribution}\}$ by setting
        \[
            G(s)(a) =
            \begin{cases}
                1/\abs{F(s)} & \text{if $a\in F(s)$,} \\
                0 & \text{otherwise.}
            \end{cases}
        \]
        Then $X[F/v]$ is the possibilistic collapse of $\X[G/v]$, as
        \begin{align*}
            \X[G/v](s(a/v)) > 0 &\iff \sum_{\substack{t\in X \\ t(a/v)=s(a/v)}}\X(t)G(t)(a) > 0 \\
            &\iff \exists t\in X\ (G(t)(a) > 0\ \text{and}\ t(a/v)=s(a/v)) \\
            &\iff \exists t\in X\ (a\in F(t)\ \text{and}\ t(a/v)=s(a/v)) \\
            &\iff s(a/v)\in X[F/v].
        \end{align*}
        Then as $\varphi$ is weakly flat, $\X[G/v]\models\varphi$, so $\X\models\varphi$.
    
        \item The universal quantifier case is similar.
    
        \item Suppose that $\A\models_{X}\tilde{\exists}v\varphi$. Then there is an expansion $\B$ of $\A$ of the new sort $\sort(v)$ such that $\B\models_{X}\exists v\varphi$. We already showed that $\exists v\varphi$ is weakly flat, so thus $\B\models_{\X}\exists v\varphi$. Thus $\A\models_{\X}\tilde{\exists}v\varphi$.
    
        \item The universal sort quantifier case is similar.
    \end{enumerate}
\end{proof}

In ordinary team semantics, the dependence atom is \emph{downwards closed}, meaning that if $X\models\=(\vec{v},\vec{w})$, then for any $Y\subseteq X$ also $Y\models\=(\vec{v},\vec{w})$. We define an analogous concept of downwards closedness and show that dependence logic is downwards closed also in probabilistic team semantics.

\begin{definition}
    We say that a probabilistic team $\Y$ is a \emph{weak subteam} of a probabilistic team $\X$ if they have the same variable and value domain and, denoting by $Y$ and $X$ the respective possibilistic collapses, $Y\subseteq X$. We say that $\Y$ is a \emph{subteam} of $\X$ if it is a weak subteam of $\X$ and there is $r\in(0,1]$ such that $\X(s) = r\Y(s)$ for all $s\in Y$.
\end{definition}

The concept of a weak subteam is the weakest notion of subteam that one would think of. Still, due to weak flatness, it seems to be enough.

\begin{definition}
    We say that a formula $\varphi$ of a probabilistic independence logic is \emph{downwards closed} if for all probabilistic teams $\X$ that satisfy $\varphi$, every subteam of $\X$ also satisfies $\varphi$. We say that a formula $\varphi$ is \emph{strongly downwards closed} if for all probabilistic teams $\X$ that satisfy $\varphi$, every weak subteam of $\X$ also satisfies $\varphi$. A sublogic of probabilistic independence logic is (strongly) downwards closed if every formula of the logic is.
\end{definition}

\begin{lemma}
    Every weakly flat formula that is downwards closed in ordinary team semantics is strongly downwards closed in probabilistic team semantics.
\end{lemma}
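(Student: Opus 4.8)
The plan is to chain the two hypotheses together through the possibilistic collapse, exploiting the fact that weak flatness is a genuine biconditional. First I would fix a probabilistic team $\X$ with $\X\models\varphi$ and let $\Y$ be an arbitrary weak subteam of $\X$; the goal is to show $\Y\models\varphi$. Denoting by $X$ and $Y$ the possibilistic collapses of $\X$ and $\Y$ respectively, the definition of weak subteam gives immediately that $\X$ and $\Y$ share the same variable and value domains and that $Y\subseteq X$.

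Next I would invoke weak flatness applied to $\X$: since $\X\models\varphi$, we obtain $X\models\varphi^*$ in ordinary team semantics, where $\varphi^*$ is the ordinary independence-logic formula obtained from $\varphi$ by replacing each $\probind$ with $\perp$. The hypothesis that $\varphi$ is downwards closed in ordinary team semantics means precisely that $\varphi^*$ is downwards closed there, so from $Y\subseteq X$ we infer $Y\models\varphi^*$. Finally I would apply weak flatness once more, now to $\Y$: the biconditional $\Y\models\varphi\iff Y\models\varphi^*$, combined with $Y\models\varphi^*$, yields $\Y\models\varphi$. As $\Y$ was an arbitrary weak subteam, this shows $\varphi$ is strongly downwards closed.

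The argument is essentially a definition-chase and carries no computational burden. The only point requiring care is recognising that weak flatness lets one pass \emph{in both directions} between a probabilistic team's satisfaction of $\varphi$ and its collapse's satisfaction of $\varphi^*$, and that the notion of weak subteam was tailored precisely to match inclusion of collapses; once these observations are in place, the ordinary downwards closure of $\varphi^*$ supplies the entire content of the lemma.
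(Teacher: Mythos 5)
Your proof is correct and is essentially identical to the paper's: collapse $\X$ to $X$, use weak flatness to get $X\models\varphi^*$, apply ordinary downwards closure to get $Y\models\varphi^*$ from $Y\subseteq X$, and use the converse direction of weak flatness to conclude $\Y\models\varphi$. Your explicit tracking of $\varphi^*$ versus $\varphi$ is a minor notational refinement the paper suppresses, but the argument is the same.
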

\begin{proof}
    Let $\varphi$ be a weakly flat formula that is downwards closed in ordinary team semantics. Let $\X$ a probabilistic team, $\Y$ a weak subteam of $\X$ and $X$ and $Y$ the respective possibilistic collapses. Suppose that $\X\models\varphi$. By weak flatness, $X\models\varphi$. By downwards closedness of $\varphi$ in ordinary team semantics, $Y\models\varphi$. Then by weak flatness again, $\Y\models\varphi$.
\end{proof}

Notice that all atomic formulas of probabilistic dependence logic are weakly flat, as proved in~\cite{MR3833654}. Hence we get the corollary:

\begin{corollary}
    \label{Corollary: Probabilistic dependence logic is weakly flat and strongly downwards closed}
    Probabilistic dependence logic is weakly flat and thus also strongly downwards closed.
\end{corollary}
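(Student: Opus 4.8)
The plan is to prove the two assertions of the corollary---weak flatness, and the strong downward closure that it is claimed to entail---in turn, each by assembling the lemmas just established.

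First I would show that every formula of probabilistic dependence logic is weakly flat, by induction on the structure of the formula. For the base cases, the first-order atomic and negated atomic formulas are weakly flat directly from clause (i) of the semantics of probabilistic independence logic, since there satisfaction is \emph{defined} to agree with satisfaction at the possibilistic collapse; and the probabilistic dependence atom $\=(\vec{v},\vec{w})$ (together with the constancy atom $\=(\vec{w})$) is weakly flat by the lemma above to that effect, which itself rests on the corresponding result of~\cite{MR3833654}. For the inductive step, the lemma asserting that the logical operations of Definition~\ref{Definition: Syntax of independence logic} preserve weak flatness covers all the connectives and quantifiers $\land$, $\lor$, $\exists$, $\forall$, $\tilde{\exists}$ and $\tilde{\forall}$. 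Since every formula of probabilistic dependence logic is generated from the atoms above by exactly these operations, weak flatness of the whole logic follows.

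Second, for strong downward closure I would invoke the lemma stating that every weakly flat formula whose ordinary-semantics translate is downwards closed in ordinary team semantics is strongly downwards closed in probabilistic team semantics. Given a formula $\varphi$ of probabilistic dependence logic, its translate $\varphi^*$---obtained by replacing each $\probind$ by $\perp$---is a formula of ordinary dependence logic, because the only independence atoms occurring in $\varphi$ are of the dependence form $\vec{w}\probind_{\vec{v}}\vec{w}$ and these map to the ordinary dependence atoms $\vec{w}\perp_{\vec{v}}\vec{w}$. Ordinary dependence logic is downwards closed in ordinary team semantics---the classical fact whose atomic instance is recalled at the start of the subsection. Combining this downward closure with the weak flatness established in the first step and the lemma, we conclude that every formula of probabilistic dependence logic is strongly downwards closed.

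I do not expect any genuine obstacle here: all the analytic content has already been discharged in the preceding lemmas, and the corollary is essentially their conjunction threaded through the inductive build-up of formulas. The one point demanding care is the bookkeeping of the fragment---one must check that the translation $\varphi\mapsto\varphi^*$ really does carry probabilistic dependence logic into ordinary dependence logic, so that the classical downward closure of the latter is available to feed into the last lemma. Once this is observed, nothing further is required.
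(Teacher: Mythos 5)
Your proposal is correct and follows essentially the same route as the paper: weak flatness of the atoms (via the cited multiteam result) plus the lemma that logical operations preserve weak flatness gives weak flatness of the whole fragment, and then the lemma that weakly flat, ordinarily downwards-closed formulas are strongly downwards closed yields the second claim. The paper leaves exactly this assembly implicit ("Hence we get the corollary"), so your write-up just makes the same argument explicit, including the correct observation that the translation $\varphi\mapsto\varphi^*$ lands in ordinary dependence logic, which is downwards closed.
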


Next we show that logical operations preserve downwards closedness. To make it easier, we first show that one may change the name of a bound variable without affecting the truth of the formula.

\begin{lemma}[Locality,~\cite{MR3802381}]
    \label{Lemma: Locality of probabilistic independence logic}
    Let $\varphi$ be a formula of probabilistic independence logic, with its free variables among $v_0,\dots,v_{m-1}$. Then for all probabilistic teams $\X$ whose variable domain $D$ includes the variables $v_i$ and any set $V$ such that $\{v_0,\dots,v_{m-1}\}\subseteq V\subseteq D$, we have
    \[
        \X\models\varphi \iff \X\restriction V\models\varphi,
    \]
    where $\X\restriction V$ is the probabilistic team $\Y$ with variable domain $V$ defined by $\Y(s)=\sum_{t\restriction V = s}\X(t)$.
\end{lemma}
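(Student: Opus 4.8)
The plan is to induct on the structure of $\varphi$, using throughout the single observation that marginal probabilities are insensitive to restriction as long as the variables involved lie in $V$: for any tuple $\vec{u}$ of variables from $V$ and any value $\vec{a}$,
\[
    \abs{\X_{\vec{u}=\vec{a}}} = \abs{(\X\restriction V)_{\vec{u}=\vec{a}}},
\]
which follows by regrouping the defining sum of $\X\restriction V$ according to the value of $\vec{u}$. For a first-order atomic or negated atomic $\alpha$, the possibilistic collapse of $\X\restriction V$ is exactly the ordinary projection to $V$ of the collapse of $\X$, so the claim reduces to locality in ordinary team semantics, which is immediate since $\alpha$ depends only on the values of its free variables. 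For a probabilistic independence atom $\vec{u}\probind_{\vec{v}}\vec{w}$ all the tuples $\vec{u},\vec{v},\vec{w}$ are free and hence contained in $V$, so every marginal appearing in its defining equation is preserved by the displayed identity, and the atom holds of $\X$ iff it holds of $\X\restriction V$. The conjunction case is immediate from the induction hypothesis.

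For disjunction $\varphi\lor\psi$, the forward direction is easy: if $\X=\Y\sqcup_r\Z$ with $\Y\models\varphi$ and $\Z\models\psi$, then one checks directly from the definitions that $\X\restriction V=(\Y\restriction V)\sqcup_r(\Z\restriction V)$, and the induction hypothesis gives $\Y\restriction V\models\varphi$ and $\Z\restriction V\models\psi$. The backward direction is the first place real work is needed: given a scaled decomposition $\X\restriction V=\Y'\sqcup_r\Z'$ with $\Y'\models\varphi$ and $\Z'\models\psi$, I would lift it to a decomposition of $\X$ by distributing mass fibrewise in proportion to $\X$. Concretely, for $s'\in\support(\X\restriction V)$ I set $w(s)=\X(s)/(\X\restriction V)(s')$ for each $s$ over $D$ with $s\restriction V=s'$, and then $\Y(s)=\Y'(s')w(s)$ and $\Z(s)=\Z'(s')w(s)$; this yields $\Y\restriction V=\Y'$, $\Z\restriction V=\Z'$, and $r\Y(s)+(1-r)\Z(s)=\X(s)$, so $\X=\Y\sqcup_r\Z$, and the induction hypothesis promotes $\Y'\models\varphi$, $\Z'\models\psi$ to $\Y\models\varphi$, $\Z\models\psi$.

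The quantifier cases rest on the fact that restriction commutes with the duplication and supplementation operations. Taking $V'=V\cup\{v\}$, one shows $(\X[A_{\sort(v)}/v])\restriction V'=(\X\restriction V)[A_{\sort(v)}/v]$ and, for supplementation, $(\X[F/v])\restriction V'=(\X\restriction V)[F'/v]$, where the witnessing function $F'$ over $\support(\X\restriction V)$ is obtained by averaging $F$ over each fibre, $F'(s')(a)=\sum_{s\restriction V=s'}\X(s)F(s)(a)/(\X\restriction V)(s')$ (which is indeed a distribution, since summing over $a$ collapses the numerator to $(\X\restriction V)(s')$). As the free variables of $\psi$ lie in $V'$, the induction hypothesis applies at level $V'$: for $\exists v\psi$, a witness $F$ for $\X$ projects to the witness $F'$ for $\X\restriction V$ (forward), and conversely a witness $F'$ for $\X\restriction V$ lifts to $F(s)=F'(s\restriction V)$ for $\X$ (backward). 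The universal case is identical using duplication, and the new-sort quantifiers $\tilde{\exists}v\psi$, $\tilde{\forall}v\psi$ reduce immediately to the ordinary quantifier cases, since expanding $\A$ by a new sort changes neither $\X$ nor the restriction operation.

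I expect the main obstacle to be precisely the backward directions of the disjunction and existential cases — the lifting steps — together with the bookkeeping needed in degenerate fibres and when $v$ already occurs in $D$. Specifically, when $(\X\restriction V)(s')=0$ the proportional-splitting recipe is undefined and must be treated separately: the fibre carries no $\X$-mass, so for $r\in(0,1)$ both $\Y'(s')$ and $\Z'(s')$ vanish and one sets $\Y,\Z$ to zero there, while the boundary values $r\in\{0,1\}$ force $\X$ to equal one component and the other is lifted by any extension. In the quantifier cases one must likewise verify the commutation identities from the general, non-fresh definitions of $\X[F/v]$ and $\X[A_{\sort(v)}/v]$ rather than the simpler fresh-variable formulas, but this is routine once the fibrewise averaging identity for $F'$ is in place.
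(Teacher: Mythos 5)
The paper gives no proof of this lemma: it is stated with an attribution to Durand et al.~\cite{MR3802381} and used as a black box, so there is no internal proof to compare your argument against. Judged on its own merits, your proof is correct and is essentially the standard locality argument for team-based semantics. The three ingredients you isolate are exactly the right ones: (1) preservation of marginals $\abs{\X_{\vec{u}=\vec{a}}}=\abs{(\X\restriction V)_{\vec{u}=\vec{a}}}$ for tuples $\vec{u}$ inside $V$, which settles the independence atoms, and the fact that the collapse of $\X\restriction V$ is the projection of the collapse of $\X$, which settles first-order literals; (2) the fibrewise proportional lift for the backward direction of disjunction, where your treatment of the degenerate fibres is the point that actually needs checking --- for $r\in(0,1)$ nonnegativity forces $\Y'$ and $\Z'$ to vanish wherever $\X\restriction V$ does, so the lift is total and sums to $1$, while $r\in\{0,1\}$ must be split off as you do; and (3) the commutation identities $(\X[A_{\sort(v)}/v])\restriction V'=(\X\restriction V)[A_{\sort(v)}/v]$ and $(\X[F/v])\restriction V'=(\X\restriction V)[F'/v]$ with $F'$ the $\X$-weighted fibre average, which hold (as you note) also when $v$ already occurs in $D$, since the paper's supplementation is defined by summing over $t\in\support\X$ agreeing with the target assignment off $v$. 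The new-sort quantifiers indeed add nothing, because restriction is independent of the ambient structure, provided the induction hypothesis is read as quantified over all structures. In short: a complete and correct proof of a statement the paper imports from the literature.
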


\begin{lemma}
    \label{Lemma: The name of a bound variable can be changed}
    Let $\varphi$ be a formula of probabilistic independence logic, and let $Q\in\{\forall,\exists\}$. Then the formulas $Qv\varphi$ and $Qw\varphi(w/v)$ are equivalent, where $w$ is a variable that does not occur in $\varphi$ and $\varphi(w/v)$ denotes the formula one obtains by replacing every free occurence of variable $v$ in $\varphi$ by variabe $w$.
\end{lemma}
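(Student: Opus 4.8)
The plan is to isolate the content of the statement as a \emph{renaming-invariance} principle and prove that by induction on $\varphi$, after first using locality to put the teams in a convenient form. Observe first that for $\varphi(w/v)$ to be a formula at all we must have $\sort(w)=\sort(v)$, so $w$ and $v$ range over the same value domain $A_{\sort(v)}$. Since $w$ does not occur in $\varphi$, the free variables of $Qv\varphi$ and of $Qw\varphi(w/v)$ coincide: both equal $\mathrm{FV}(\varphi)\setminus\{v\}$. Writing $V$ for this common set, Lemma~\ref{Lemma: Locality of probabilistic independence logic} lets me replace $\X$ by $\X\restriction V$ on both sides, so I may assume from the outset that neither $v$ nor $w$ lies in $\dom(\X)$; in particular both are fresh for $\X$.

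With $v,w$ fresh, the quantifier step is a bookkeeping computation. For $Q=\exists$ we have $\X\models\exists v\varphi$ iff $\X[F/v]\models\varphi$ for some supplementation function $F$ on $\support\X$ valued in distributions on $A_{\sort(v)}$; because $v$ is fresh, $\X[F/v](s(a/v))=\X(s)F(s)(a)$, and the same $F$ gives $\X[F/w](s(a/w))=\X(s)F(s)(a)$. Thus $s(a/v)\mapsto s(a/w)$ is a probability-preserving bijection identifying $\X[F/w]$ with the relabeling of $\X[F/v]$ along $v\mapsto w$. The case $Q=\forall$ is identical, with the duplicated teams $\X[A_{\sort(v)}/v]$ and $\X[A_{\sort(w)}/w]$ in place of the supplemented ones. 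Hence everything reduces to showing $\X[F/v]\models\varphi\iff\X[F/w]\models\varphi(w/v)$ (respectively with the duplicated teams), which is an instance of the renaming principle.

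The renaming principle I would prove is: if $\Z$ is a probabilistic team with $v\in\dom(\Z)$ and $w\notin\dom(\Z)$, and $w$ does not occur in $\varphi$, then $\Z\models\varphi\iff\Z^{v\to w}\models\varphi(w/v)$, where $\Z^{v\to w}$ is obtained by relabeling the column $v$ as $w$ (a probability-preserving bijection of supports). This goes by induction on $\varphi$. The base cases are the first-order atoms and the probabilistic independence atoms: relabeling a column neither creates nor destroys coincidences of values, so every marginal $\abs{\Z_{\vec{u}=\vec{a}}}$ occurring in the satisfaction conditions is preserved, and the substituted atom is tested against exactly the relabeled marginals. Conjunction is immediate; disjunction uses that $\Z=\Z_0\sqcup_r\Z_1$ relabels to $\Z^{v\to w}=\Z_0^{v\to w}\sqcup_r\Z_1^{v\to w}$, and conversely. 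For the quantifiers $\exists u$ and $\forall u$ with $u\neq v$ and $u\neq w$ (the latter since $w$ does not occur in $\varphi$), the operations $\Z[G/u]$ and $\Z[A_{\sort(u)}/u]$ commute with the relabeling of the column $v$, so the induction hypothesis applied to the inner formula closes the case; a subordinate quantifier that rebinds $v$ is handled by Lemma~\ref{Lemma: Locality of probabilistic independence logic}, since then $\varphi(w/v)$ leaves that subformula untouched and the renamed column $w$ does not occur in it. Finally the sort quantifiers $\tilde{\exists}u$ and $\tilde{\forall}u$ range over expansions $\B$ of $\A$ by a new sort; since relabeling variables does not touch the structure, these follow from the already-established ordinary quantifier cases applied uniformly to every such $\B$.

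The one place requiring genuine care—the main obstacle—is the collection of quantifier cases in this induction: one must verify that each team-forming operation (supplementation, duplication, scaled union, and the structure expansions used for the new-sort quantifiers) is compatible with the relabeling bijection, and that the freshness of $w$ is exactly what rules out capture when the substitution $\varphi(w/v)$ is pushed under an inner quantifier. Once the renaming principle is in hand, combining it with the reduction of the first two paragraphs yields $\X\models Qv\varphi\iff\X\models Qw\varphi(w/v)$ for $Q\in\{\forall,\exists\}$, as required.
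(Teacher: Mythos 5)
Your proposal is correct and follows essentially the same route as the paper's proof: reduce via Lemma~\ref{Lemma: Locality of probabilistic independence logic} to teams over the free variables, then prove by induction on $\varphi$ a renaming principle for the relabeled team (the paper states it as: for $\X$, $\Y$ with $\X(s(a/v))=\Y(s(a/w))$ for all $s,a$, one has $\X\models\varphi\iff\Y\models\varphi(w/v)$, which is exactly your $\Z$ versus $\Z^{v\to w}$), handling atoms through preserved marginals, disjunction through scaled unions, and quantifiers through supplementation/duplication commuting with the relabeling. If anything, you are slightly more explicit than the paper about the inner-quantifier case that rebinds $v$, which the paper's induction passes over silently.
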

\begin{proof}
    The statement of the lemma easily follows from the following claim:
    \begin{quote}
        Let $\X$ and $\Y$ be probabilistic teams with variable domain $D_\X = \{v_0,\dots,v_{n-1}\}$ and $D_\Y = \{w_0,\dots,w_{n-1}\}$, respectively, such that
        \begin{enumerate}
            \item $\support\Y = \{s^* \mid s\in\support\X\}$, where $s^*$ is the assignment with domain $D_\Y$ such that $s^*(w_i) = s(v_i)$ for all $i<n$, and
            \item $\X(s) = \Y(s^*)$ for all $s\in\support\X$.
        \end{enumerate}
        Then for any $\varphi$ with free variables in $D_\X$,
        \[
            \X\models\varphi \iff \Y\models\varphi(w_0/v_0,\dots,w_{n-1}/v_{n-1}).
        \]
    \end{quote}
    The claim can be proved with a straightforward induction on $\varphi$.
\end{proof}

\begin{proposition}
    If all atomic formulas of a sublogic of probabilistic independence logic are (strongly) downwards closed, then the whole sublogic is.
\end{proposition}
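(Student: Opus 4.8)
The plan is to argue by induction on the structure of a formula $\varphi$ of the sublogic, proving that each such $\varphi$ is downwards closed, where the base case ($\varphi$ atomic) is precisely the hypothesis. The conjunction case is immediate: if $\X\models\psi\land\theta$ then $\X\models\psi$ and $\X\models\theta$, so by the induction hypothesis every subteam $\Y$ of $\X$ satisfies both conjuncts, hence $\psi\land\theta$. Throughout, I write a subteam $\Y$ of $\X$ via its support $Y\subseteq\support\X$ and the scaling $\X(s)=r\Y(s)$ on $Y$, where necessarily $r=\sum_{s\in Y}\X(s)\in(0,1]$.

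The disjunction case is the main combinatorial step. Suppose $\X\models\psi\lor\theta$ is witnessed by $\X=\X_0\sqcup_{r'}\X_1$ with $\X_0\models\psi$ and $\X_1\models\theta$, and let $\Y$ be a subteam of $\X$. Put $c_0=\sum_{s\in Y}\X_0(s)$ and $c_1=\sum_{s\in Y}\X_1(s)$; summing the identity $\X(s)=r'\X_0(s)+(1-r')\X_1(s)$ over $s\in Y$ yields $r=r'c_0+(1-r')c_1$. I would then set $r''=r'c_0/r$ and let $\Y_0$ (resp. $\Y_1$) be the subteam of $\X_0$ (resp. $\X_1$) obtained by conditioning on $Y\cap\support\X_0$ (resp. $Y\cap\support\X_1$) when this set is non-empty, and an arbitrary model of the relevant disjunct otherwise. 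Checking the three regions $Y\cap\support\X_0\setminus\support\X_1$, $Y\cap\support\X_1\setminus\support\X_0$ and $Y\cap\support\X_0\cap\support\X_1$ separately shows $\Y=\Y_0\sqcup_{r''}\Y_1$; since $\Y_0\models\psi$ and $\Y_1\models\theta$ by the induction hypothesis, $\Y\models\psi\lor\theta$. The degenerate cases $c_0=0$ or $c_1=0$ force $r''=0$ or $r''=1$ and collapse to a single disjunct, while $c_0=c_1=0$ cannot occur because $r>0$.

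For the quantifier cases the guiding idea is that the quantifier operations carry subteams to subteams, provided the bound variable is fresh. Using Lemma~\ref{Lemma: The name of a bound variable can be changed} together with Lemma~\ref{Lemma: Locality of probabilistic independence logic}, I would first reduce to the case where $v$ does not belong to the variable domain of $\X$; this is legitimate because the downwards closedness of an atom is invariant under relabelling its variables, so the renamed subformula has only downwards-closed atoms and is again covered by the induction. For $\varphi=\exists v\psi$, choose $F$ with $\X[F/v]\models\psi$ and set $G=F\restriction\support\Y$; then with $v$ fresh one computes $\Y[G/v](s(a/v))=\frac{1}{r}\,\X[F/v](s(a/v))$ on its support, which is contained in $\support\X[F/v]$, so $\Y[G/v]$ is a subteam of $\X[F/v]$ and the induction hypothesis gives $\Y[G/v]\models\psi$, i.e. $\Y\models\exists v\psi$. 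For $\varphi=\forall v\psi$ the same calculation with the uniform weights shows $\Y[A_{\sort(v)}/v]$ is a subteam of $\X[A_{\sort(v)}/v]$, which finishes that case. Finally, for the sort quantifiers $\tilde{\exists}v\psi$ and $\tilde{\forall}v\psi$ one notes that $\Y$ remains a subteam of $\X$ in every expansion $\B$ of $\A$ by the new sort, so the downwards closedness of $\exists v\psi$ and $\forall v\psi$ just established—uniformly in the structure—transfers: the existential witness $\B$ is retained, and the universal condition is inherited for every $\B$.

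I expect the disjunction case to be the principal obstacle, since one must split the conditioned distribution $\Y$ compatibly with the prescribed decomposition $\X=\X_0\sqcup_{r'}\X_1$ and pin down the correct mixing weight $r''=r'c_0/r$, verifying that the degenerate subcases still supply a valid witness. The quantifier cases are routine once the bound variable is made fresh; the only delicate point is that the naive choice $G=F\restriction\support\Y$ produces a genuine subteam of $\X[F/v]$ exactly when $v$ is fresh—and fails when $v$ already occurs in the domain—which is precisely what makes the reduction through the renaming and locality lemmas necessary.
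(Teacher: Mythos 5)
Your proof is correct, and its overall skeleton is the same as the paper's: induction on the formula, atoms by hypothesis, conjunction trivially, disjunction by conditioning the witnessing decomposition on the subteam's support, and quantifiers by first renaming the bound variable to a fresh one (via the name-change and locality lemmas) and checking that supplementation and duplication carry subteams to subteams, with the sort quantifiers inheriting the result structure-by-structure. The genuine difference is in the disjunction case, and there your decomposition is the better one. The paper splits asymmetrically: it sets $\support\Y_0=\support\X_0\cap\support\Y$ but $\support\Y_1=(\support\X_1\setminus\support\X_0)\cap\support\Y$, normalizes each part, and mixes with weight $qp_0/(qp_0+(1-q)p_1)$. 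That split discards the $\X_1$-mass lying on $\support\X_0$, and its claimed identity $\Y=\Y_0\sqcup_r\Y_1$ can fail: with one variable, $\X_0$ uniform on $\{0,1\}$, $\X_1$ uniform on $\{0,2\}$, $q=1/2$, and $\Y$ the subteam of $\X=\X_0\sqcup_{1/2}\X_1$ supported on $\{0,1\}$ (so $\Y(0)=2/3$, $\Y(1)=1/3$), the paper's recipe yields an empty $\Y_1$, $r=1$, and $\Y_0=\X_0\neq\Y$. Your symmetric conditioning --- $\Y_i$ is $\X_i$ conditioned on $Y\cap\support\X_i$, mixed with weight $r''=r'c_0/r$ --- reproduces $\Y(s)=\bigl(r'\X_0(s)+(1-r')\X_1(s)\bigr)/r$ exactly, overlap included, and your degenerate cases $c_i=0$ (take, say, $\Y_i=\X_i$ as the vacuous witness when the mixing weight kills it) are exactly the cases the paper leaves unaddressed. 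You are also more careful than the paper on two further points: the renaming step is licensed because downwards closedness of an atom is invariant under injective relabelling of its variables, so the renamed formula is still covered by the induction; and you spell out the sort-quantifier cases that the paper dismisses as ``similar.''
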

\begin{proof}
    Suppose that all atomic formulas are downwards closed. We show by induction that every formula is.
    \begin{enumerate}
        \item The case of conjunction follows immediately from the induction hypothesis.

        \item Suppose that $\Y$ is a subteam of $\X$ and $\X\models\varphi\lor\psi$. Let $p\in(0,1]$ be such that $p\Y(s)=\X(s)$ for $s\in\support\Y$. Note that then $p=\sum_{s\in\support\Y}\X(s)$. Now there are $\X_0$ and $\X_1$ and $q\in[0,1]$ such that $\X_0\models\varphi$ and $\X_1\models\psi$ and $\X=\X_0\sqcup_q\X_1$. Then let $\Y_i$ be such that
        \[
            \support\Y_0 = \support\X_0\cap\support\Y \quad\text{and}\quad \Y_0(s) = \X_0(s)/p_0
        \]
        for $s\in\support\Y_i$, where $p_0 = \sum_{s\in\support\Y_0}\X_0(s)$, and
        \[
            \support\Y_1 = (\support\X_1\setminus\support\X_0)\cap\support\Y \quad\text{and}\quad \Y_1(s) = \X_1(s)/p_1
        \]
        for $s\in\support\Y_i$, where $p_1 = \sum_{s\in\support\Y_1}\X_1(s)$. Now
        \begin{enumerate}
            \item $\Y_i$ is well defined distribution for $i<2$, as
            \begin{align*}
                \sum_{s\in\support\Y_i}\Y_i(s) &= \sum_{s\in\support\Y_i}\X_i(s)/p_i = \frac{\sum_{s\in\support\Y_i}\X_i(s)}{\sum_{s\in\support\Y_i}\X_i(s)} = 1.
            \end{align*}
    
            \item $\Y_i$ is a subteam of $\X_i$ for $i<2$, as by definition $\support\Y_i\subseteq\support\X_i$ and $\X_i(s)=p_i\Y_i(s)$ for $s\in\support\Y_i$, where $p_i\in(0,1]$.
    
            \item $\Y = \Y_0\sqcup_{r}\Y_1$, where
            \[
                r = \frac{qp_0}{qp_0 + (1-q)p_1},
            \]
            as
            can be verified by a straightforward calculation.
        \end{enumerate}
        Then by the induction hypothesis, $\Y_0\models\varphi$ and $\Y_1\models\psi$, so $\Y\models\varphi\lor\psi$.

        \item Suppose that $\Y$ is a subteam of $\X$ and $\X\models\exists v\varphi$. Let $w$ be a fresh variable outside of the variable domain of $\X$. By Lemma~\ref{Lemma: The name of a bound variable can be changed}, $\X\models\exists w\varphi(w/v)$. Let $p\in(0,1]$ be such that $p\Y(s)=\X(s)$ for $s\in\support\Y$. Now $\X[F/w]\models\varphi(w/v)$ for some $F$. Let $G = F\restriction\support\Y$. Then $\Y[G/w]$ is a subteam of $\X[F/w]$, as
        \begin{align*}
            \X[F/w](s(a/w)) &= \X(s)F(s)(a) \\
            &= p\Y(s)G(s)(a) \\
            &= p\Y[G/w](s(a/w))
        \end{align*}
        for all $s\in\support\Y$. But then by the induction hypothesis, $\Y[F/w]\models\varphi(w/v)$, so $\Y\models\exists w\varphi(w/v)$. Then by Lemma~\ref{Lemma: The name of a bound variable can be changed}, $\Y\models\varphi$.
    
        \item The other quantifier cases are similar.
    \end{enumerate}
    Then suppose that all atomic formulas are strongly downwards closed.
    \begin{enumerate}
        \item The case for conjunction is again trivial.

        \item Suppose that $\Y$ is a weak subteam of $\X$ and $\X\models\varphi\lor\psi$. Then there is $r\in[0,1]$ and probabilistic teams $\X_0$ and $\X_1$ such that $\X = \X_0\sqcup_r\X_1$, $\X_0\models\varphi$ and $\X_1\models\psi$. Using an argument similar to the one presented in the proof of Lemma~\ref{Lemma: Logical operations preserve weak flatness}, we can define $\Y_0$, $\Y_1$ and $r'$ such that $\Y = \Y_0\sqcup_{r'}\Y_1$, $\support\Y_0 = \support\X_0\cap\support\Y$ and $\support\Y_1 = \support\X_1\cap\support\Y$. Now, as $\support\Y_i\subseteq\support\X_i$, $\Y_i$ is a weak subteam of $\X_i$ for $i=0,1$. Hence,  by the induction hypothesis, we have $\Y_0\models\varphi$ and $\Y_1\models\psi$. Hence $\Y\models\varphi\lor\psi$.

        \item Suppose that $\Y$ is a weak subteam of $\X$ and $\X\models\exists v\varphi$. Without loss of generality, $v$ is not in the variable domain of $\X$. Now $\X[F/v]\models\varphi$ for some $F$. Let $G = F\restriction\support\Y$. Then $\Y[G/v]$ is a weak subteam of $\X[F/v]$: for all assignments $s$ whose domain is the variable domain of $\X$ and elements $a$ from the value domain of $\X$,
        \begin{align*}
            s(a/v)\in\support\Y[G/v] &\implies \text{$s\in\support\Y$ and $G(s)(a)>0$} \\
            &\implies \text{$s\in\support\X$ and $F(s)(a)>0$} \\
            &\implies s(a/v)\in\support\X[F/v].
        \end{align*}
        By the induction hypothesis, $\Y[G/v]\models\varphi$, whence $\Y\models\exists v\varphi$.

        \item The other quantifier cases are similar.
    \end{enumerate}
\end{proof}

One might wonder whether ordinary independence logic is a result of ``collapsing'' probabilistic independence logic in the sense that a team $X$ satisfies a formula $\varphi$ if and only if it is the collapse of some probabilistic team $\X$ such that $\X$ satisfies the probabilistic version of $\varphi$. It turns out, in Proposition~\ref{Proposition: Truth in independence logic is preserved by possibilistic collapse}, that, indeed, given a probabilistic team that satisfies a formula, also the collapse will satisfy the (possibilistic version of the) formula. But given an ordinary team that satisfies a formula, there may not be \emph{any} probabilistic realization of that team that would satisfy the (probabilistic version of the) formula. We will see in Proposition~\ref{Proposition: Possibilistic no-signalling does not imply probabilistic no-signalling} that such a formula and a team can be quite simple.

We add a new operation $\PR$ to ordinary independence logic, defined by
\begin{center}
    $X\models\PR\varphi$ if there is a probabilistic team $\X$ such that $\X\models\varphi$ and $X = \support\X$.
\end{center}
One can then ask whether this operation is downwards closed, closed under unions, $\Sigma_1^1$-definable etc.

By weak flatness, for any formula $\varphi$ of \emph{dependence logic}, we have $\varphi\equiv\PR\varphi$.

A similar discussion can be applied to logical consequence. Logical consequence in ordinary team semantics is different from logical consequence in probabilistic team semantics: as was shown by Studený~\cite{Studeny90conditionalindependence} and also pointed out in the team semantics context by Albert \& Grädel~\cite{2021arXiv210210931A}, the following is an example of a rule that is sound in team semantics but not in probabilistic team semantics:
\[
    \{\vx\perp_{\vz}\vy,\vx\perp_{\vu}\vy, \vz\perp_{\vx\vy}\vu\} \quad\text{entails}\quad \vx\perp_{\vz\vu}\vy,
\]
while the following is an example of a rule that is sound in probabilistic team semantics but not in ordinary team semantics:
\[
    \{\vx\probind_{\vy\vz}\vy, \vz\probind_{\vx}\vu, \vz\probind_{\vy}\vu, \vx\probind\vy\} \quad\text{entails}\quad \vz\probind\vu.
\]
Certainly this means that such rules cannot be derived from the axioms of Definition~\ref{axioms} (or even the semigraphoid axioms), since the axioms are satisfied by both ordinary and probabilistic independence logic, as will be shown in Proposition~\ref{Proposition: Probabilistic soundness theorem}.

It was recently proved that the implication problem for probabilistic independence atoms is undecidable~\cite{MR4544911}.

Earlier we noted that in ordinary team semantics, being a realization of a hidden-variable team is expressible by means of  existential quantifiers. We show that this is also the case in probabilistic team semantics.

\begin{lemma}\label{419}
    Let $\A$ be a structure and $\B$ an expansion of $\A$ by the hidden-variable sort. Let $\X$ a probabilistic empirical team of $\A$ and $\Y$ a probabilistic hidden-variable team of $\B$. Then $\X$ is uniformly realized by $\Y$ if and only if
    \[
        \Y=\X[F_0/z_0]\dots[F_0/z_{l-1}]
    \]
    for some functions $F_i$.
\end{lemma}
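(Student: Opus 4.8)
The plan is to reduce the whole statement to a single clean fact about marginals, namely that $\Y$ uniformly realizes $\X$ if and only if the marginal of $\Y$ on the empirical variables recovers $\X$, i.e.\ $\Y\restriction(\Vm\cup\Vo)=\X$. First I would unwind the definition of uniform realization. Since an empirical assignment $s$ has variable domain exactly $\Vm\cup\Vo$, fixing $\vec{x}\vec{y}=s(\vec{x})s(\vec{y})$ pins down $s$, so $\abs{\X_{\vec{x}\vec{y}=\vec{a}\vec{b}}}$ is just the weight $\X$ puts on the corresponding assignment, while $\abs{\Y_{\vec{x}\vec{y}=\vec{a}\vec{b}}}=(\Y\restriction(\Vm\cup\Vo))(s)$. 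I claim the three defining clauses of uniform realizability are together equivalent to $\abs{\X_{\vec{x}\vec{y}=\vec{a}\vec{b}}}=\abs{\Y_{\vec{x}\vec{y}=\vec{a}\vec{b}}}$ for all $\vec{a},\vec{b}$. If the $\vec{x}\vec{y}$-marginals agree, then summing over $\vec{b}$ gives agreement of the $\vec{x}$-marginals (the uniformity clause), which in turn yields the zero-pattern clause, and the product clause becomes a trivial identity; conversely, given the three clauses, substituting $\abs{\X_{\vec{x}=\vec{a}}}=\abs{\Y_{\vec{x}=\vec{a}}}$ into the product clause and dividing when this quantity is positive (and using the zero-pattern clause otherwise) recovers the $\vec{x}\vec{y}$-marginal equality.

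The core of both directions is a one-step observation, which I would isolate: if $\X$ is a probabilistic team with variable domain $D$ and $\Y$ a team with variable domain $D\cup\{v\}$ where $v\notin D$ and $\Y\restriction D=\X$, then $\Y=\X[F/v]$ for some $F$. To prove it, set $F(s)(a)=\Y(s(a/v))/\X(s)$ for $s\in\support\X$; this is a probability distribution in $a$ precisely because $\sum_a\Y(s(a/v))=(\Y\restriction D)(s)=\X(s)$, and then the fresh-variable clause $\X[F/v](s(a/v))=\X(s)F(s)(a)=\Y(s(a/v))$ gives the equality on the support, while for $s\notin\support\X$ both sides vanish (the left because the supplementation formula only ranges over $\support\X$, the right because $\Y(s(a/v))\le(\Y\restriction D)(s)=\X(s)=0$).

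Given this, the backward direction is immediate: each supplementation by a fresh hidden variable leaves the marginal on the earlier variables unchanged, since $\sum_a F_i(\cdot)(a)=1$, so iterating $l$ times yields $\Y\restriction(\Vm\cup\Vo)=\X$, whence uniform realizability by the reduction above. For the forward direction I would peel the hidden variables off one at a time: writing $\Y_k=\Y\restriction(\Vm\cup\Vo\cup\{z_0,\dots,z_{k-1}\})$, uniform realizability gives $\Y_0=\Y\restriction(\Vm\cup\Vo)=\X$, and since $z_k$ does not belong to the variable domain of $\Y_k$ and the restriction of $\Y_{k+1}$ to that domain is $\Y_k$ (by transitivity of marginalization), the one-step observation produces $F_k$ with $\Y_{k+1}=\Y_k[F_k/z_k]$; after $l$ steps $\Y=\X[F_0/z_0]\cdots[F_{l-1}/z_{l-1}]$.

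The main obstacle, such as it is, lies in the bookkeeping of the one-step observation at assignments outside the support: one must check that the supplemented team vanishes exactly where $\Y$ does, so that the two teams agree as distributions and not merely on the support. This is handled by noting that $\Y(s(a/v))$ is dominated by the marginal $(\Y\restriction D)(s)=\X(s)$, so $s\notin\support\X$ forces $\Y(s(a/v))=0$; everything else is the chain rule of probability dressed in team-semantic notation.
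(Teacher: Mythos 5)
Your proof is correct, and it arrives at the same key object as the paper: in both arguments the supplement function is the conditional distribution of the hidden variable given the empirical part, since under your marginal-equality reduction one has $\X(s)=\abs{\Y_{\vec{x}\vec{y}=s(\vec{x}\vec{y})}}$, so your $F(s)(a)=\Y(s(a/v))/\X(s)$ is literally the paper's $F(s)(\gamma)=\Y(s(\gamma/z))/\abs{\Y_{\vec{x}\vec{y}=s(\vec{x}\vec{y})}}$. The organization, however, is genuinely different. The paper never isolates the characterization that uniform realization is the same as $\Y\restriction(\Vm\cup\Vo)=\X$; instead it assumes $l=1$ ``for simplicity'' and verifies the defining clauses by direct computation, using the realization equation together with the uniformity clause to turn the ratio $\abs{\X_{\vec{x}=\vec{a}}}/\abs{\Y_{\vec{x}=\vec{a}}}$ into $1$ in one direction, and checking the clauses by hand in the other. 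Your three-step decomposition --- (1) uniform realization is equivalent to equality of the $\vec{x}\vec{y}$-marginals, (2) a one-fresh-variable lemma identifying marginal-preserving extensions with supplementations, (3) iteration of (2) --- buys two things: an honest treatment of the case $l>1$, which the paper leaves implicit behind its reduction to a single hidden variable, and a reusable lemma about probabilistic teams that is not tied to the empirical/hidden-variable split. The price is the extra bookkeeping you correctly flagged: transitivity of marginalization, and the check that $\X[F/v]$ and $\Y$ agree also outside $\support\X$, which you settle by the domination $\Y(s(a/v))\le(\Y\restriction D)(s)$; the paper's single pointwise computation sidesteps both.
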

\begin{proof}
    For simplicity we assume that $l=1$ and $\vec{z}=z$.

    Suppose that $\Y$ uniformly realizes $\X$. Then for all $\vec{a}$ and $\vec{b}$,
    \begin{enumerate}
        \item $\abs{\X_{\vec{x}=\vec{a}}}=\abs{\Y_{\vec{x}=\vec{a}}}$, and
        \item $\abs{\X_{\vec{x}\vec{y}=\vec{a}\vec{b}}}\abs{\Y_{\vec{x}=\vec{a}}} = \abs{\Y_{\vec{x}\vec{y}=\vec{a}\vec{b}}}\abs{\X_{\vec{x}=\vec{a}}}$.
    \end{enumerate}
    Define a function $F$ by setting
    \[
        F(s)(\gamma) = \frac{\Y(s(\gamma/z))}{\abs{\Y_{\vec{x}\vec{y}=s(\vec{x}\vec{y})}}}
    \]
    for all $s\in\support\X$ and $\gamma$. Now $F(s)$ is a distribution, as
    \[
        \sum_\gamma F(s)(\gamma) = \sum_\gamma \frac{\Y(s(\gamma/z))}{\abs{\Y_{\vec{x}\vec{y}=s(\vec{x}\vec{y})}}} = \frac{\abs{\Y_{\vec{x}\vec{y}=s(\vec{x}\vec{y})}}}{\abs{\Y_{\vec{x}\vec{y}=s(\vec{x}\vec{y})}}} = 1.
    \]
    Then
    \begin{align*}
        \X[F/z](\vec{x}\vec{y}z\mapsto\vec{a}\vec{b}\gamma) &= \X(\vec{x}\vec{y}\mapsto\vec{a}\vec{b})F(\vec{x}\vec{y}\mapsto\vec{a}\vec{b})(\gamma) \\
        &= \abs{\X_{\vec{x}\vec{y}=\vec{a}\vec{b}}}\cdot\frac{\Y(\vec{x}\vec{y}z\mapsto\vec{a}\vec{b}\gamma)}{\abs{\Y_{\vec{x}\vec{y}=\vec{a}\vec{b}}}} \\
        &= \frac{\abs{\X_{\vec{x}=\vec{a}}}}{\abs{\Y_{\vec{x}=\vec{a}}}}\Y(\vec{x}\vec{y}z\mapsto\vec{a}\vec{b}\gamma) \\
        &= \Y(\vec{x}\vec{y}z\mapsto\vec{a}\vec{b}\gamma),
    \end{align*}
    whence $\Y=\X[F/z]$. Thus $\X[F/z]\models\varphi$, so $\X\models\exists z\varphi$.

    Conversely, suppose that $\Y=\X[F/z]$ for some $F$. Then it is clear that $\abs{\X_{\vec{x}=\vec{a}}}=0$ if and only if $\abs{\Y_{\vec{x}=\vec{a}}}=0$ for any $\vec{a}$. Now
    \begin{align*}
        \abs{\Y_{\vec{x}\vec{y}=\vec{a}\vec{b}}}\abs{\X_{\vec{x}=\vec{a}}} &= \abs{\X_{\vec{x}=\vec{a}}}\sum_\gamma\Y(\vec{x}\vec{y}z\mapsto\vec{a}\vec{b}\gamma) \\
        &= \abs{\X_{\vec{x}=\vec{a}}}\sum_\gamma\X(\vec{x}\vec{y}\mapsto\vec{a}\vec{b})F(\vec{x}\vec{y}\mapsto\vec{a}\vec{b})(\gamma) \\
        &= \abs{\X_{\vec{x}=\vec{a}}}\X(\vec{x}\vec{y}\mapsto\vec{a}\vec{b}) \\
        &= \abs{\X_{\vec{x}=\vec{a}}}\abs{\X_{\vec{x}\vec{y}=\vec{a}\vec{b}}} \\
        &= \abs{\X_{\vec{x}\vec{y}=\vec{a}\vec{b}}}\abs{\Y_{\vec{x}=\vec{a}}}.
    \end{align*}
    Thus $\Y$ uniformly realizes $\X$.
\end{proof}

A consequence of Lemma~\ref{419} is that if $\varphi(\vec{x},\vec{y},\vec{z})$ is a formula of probabilistic independence logic and defines a property of probabilistic hidden-variable teams, then $\tilde{\exists} z_0\exists z_1\dots\exists z_{l-1}\varphi$ defines the class of probabilistic empirical teams that are uniformly realized by hidden-variable teams satisfying $\varphi$.

\subsection{$K$-Teams}

In this section we study a version of team semantics using semirings. It can be viewed as a generalization of both ordinary and probabilistic team semantics.

Semiring relations, which we can consider semiring teams, were introduced in~\cite{gkt2007provenance} to study the provenance of relational database queries. In~\cite{gt2017semiring}, provenance of first-order formulas was studied by defining a semiring semantics for first-order logic. A similar approach to team semantics was taken in~\cite{bhkpv2023unified}. $K$-relations are studied in~\cite{hannula2023conditional} as a unifying framework for conditional independence and other similar notions.

In the sheaf-theoretic approach to contextuality and non-locality introduced in \cite{Abramsky_2011}, semiring-valued distributions are used to give a unified account of probabilistic and possibilistic forms of contextuality and non-locality, as well as signed measures (``negative probabilities'').  

\begin{definition}
    A structure $(K, +, \cdot, 0, 1)$ is a (non-trivial) \emph{semiring} if
    \begin{enumerate}
        \item $(K,+,0)$ is a commutative monoid with identity element $0$,
        \item $(K,\cdot,1)$ is a monoid with identity element $1$,
        \item The multiplication (both right and left) distributes over the addition,
        \item $0$ annihilates $K$, i.e. $a\cdot 0 = 0\cdot a = 0$ for all $a\in K$, and
        \item $0\neq 1$.
    \end{enumerate}
\end{definition}
We say that a semiring $K$ is \emph{commutative} if also multiplication is commutative. We say that $K$ is \emph{multiplicatively cancellative} if for all $a,b,c\in K$,
\[
    \text{$ab = ac$ and $a \neq 0$} \implies b = c.
\]
We say that $K$ is \emph{positive} if it is plus-positive, i.e.
\[
    a+b = 0 \implies a=b=0,
\]
and has no zero-divisors, i.e.
\[
    ab = 0 \implies \text{$a = 0$ or $b = 0$}.
\]

Canonical semirings include
\begin{itemize}
    \item the natural numbers $(\N,+,\cdot,0,1)$,
    \item multivariate polynomials $(\N[X_0,\dots,X_{n-1}],+,\cdot,0,1)$ over $\N$,
    \item the Boolean semiring $\BB = (\{0,1\},\lor,\land,0,1)$, and
    \item the non-negative reals $\R_{\geq 0} = ([0,\infty),0,\cdot,0,1)$,
\end{itemize}
as well as all rings.

$K$-teams are the natural generalization of probabilistic teams: all the relevant definitions are the same but with the interval $[0,1]$ of real numbers replaced with $K$.

\begin{definition}
    Let $K$ be a semiring and $X$ the full team of a finite structure $\A$. A $K$-team of $\A$ is a function $\X\colon X\to K$. We denote by $\support\X$ the set $\{s\in X \mid \X(s) \neq 0\}$. By $|\X_{\vu = \va}|$ we denote the sum
    \[
        \sum_{\substack{s(\vec{u})=\vec{a} \\ s\in\support\X}}\X(s).
    \]
\end{definition}

We can view probabilistic teams as $\R_{\geq0}$-teams, multiteams as $\N$-teams and ordinary teams as $\BB$-teams. We say that a $K$-team $\X$ is \emph{total} if $\support\X$ is the full team, i.e. $\X(s) > 0$ for all assignments $s$.

\begin{definition}
    Let $\A$ be a finite structure, $\X$ a $K$-team of $\A$, and $\vu$, $\vv$ and $\vw$ tuples of variables. Then $\A\models_\X \vu\probind_{\vv}\vw$ if for all $\va$, $\vb$ and $\vc$,
    \[
        \abs{\X_{\vec{u}\vec{v}=\vec{a}\vec{b}}} \cdot \abs{\X_{\vec{v}\vec{w}=\vec{b}\vec{c}}} = \abs{\X_{\vec{u}\vec{v}\vec{w}=\vec{a}\vec{b}\vec{c}}} \cdot \abs{\X_{\vec{v}=\vec{b}}}.
    \]
\end{definition}

It is easy to see that the definition of independence is invariant under scaling, i.e. if we denote by $a\X$ the $K$-team $s\mapsto a\X(s)$, then
\[
    \X\models\vu\probind_{\vv}\vw \iff a\X\models\vu\probind_{\vv}\vw.
\]

\begin{proposition}[Hannula~\cite{hannula2023conditional}]
    \label{Proposition (Hannula): Semigraphoid axioms hold very generally}
    The following hold for the semigraphoid axioms:
    \begin{enumerate}
        \item Triviality, Symmetry and Decomposition are sound for all $K$-teams whenever $K$ is a commutative semiring.
        \item If in addition, $K$ is positive and multiplicatively cancellative, also Weak Union and Contraction are sound.
    \end{enumerate}
\end{proposition}

\begin{lemma}
    The reflexivity rule from Definition~\ref{axioms} is sound for all $K$-teams for $K$ a commutative semiring.
\end{lemma}
\begin{proof}
    Let $\X$ be a $K$-team. Let $\va$ and $\vb$ be arbitrary. Then
    \begin{align*}
        \abs{\X_{\vx\vx = \va\va}}\cdot\abs{\X_{\vx\vy = \va\vb}} = \abs{\X_{\vx = \va}}\cdot\abs{\X_{\vx\vx\vy = \va\va\vb}} =\abs{\X_{\vx\vx\vy = \va\va\vb}}\cdot\abs{\X_{\vx = \va}}.
    \end{align*}
    Hence $\X\models\vx\probind_\vx\vy$.
\end{proof}

\begin{corollary}[Probabilistic Soundness Theorem]
    \label{Proposition: Probabilistic soundness theorem}
    If $\varphi$ entails $\psi$ by repeated applications of the rules of Section~\ref{subsec:axioms-of-independence-logic}, then $\varphi\models\psi$ in probabilistic team semantics.
\end{corollary}
\begin{proof}
    \begin{enumerate}
        \item Soundness of the axioms of the independence atom follows from Proposition~\ref{Proposition: Axioms of independence atom follow from semigraphoid axioms}.
        \item \emph{Dependence introduction:} Follows from Corollary~\ref{Corollary: Probabilistic dependence logic is weakly flat and strongly downwards closed}.
        \item \emph{Elimination of existential quantifier:}
    Follows from Lemma~\ref{Lemma: Locality of probabilistic independence logic} and the observation that if $x\notin V$, then $\X[F/x]\restriction V = \X\restriction V$ for any function $F$.
        \item \emph{Introduction of existential quantifier:}
        Suppose that $y$ does not occur in the range of $\exists x$ or $\forall x$ in $\varphi$. Suppose that $\X\models\varphi(y/x)$. Then define a function $F$ by setting
        \[
            F(s)(a) =
            \begin{cases}
                1 & \text{if $s(y)=a$,} \\
                0 & \text{otherwise.}
            \end{cases}
        \]
        Then clearly $\X[F/x]$ is the same distribution on assignments $s(s(y)/x)$ as $\X$ is on assignments $s$. Thus $\X[F/x]\models\varphi$ and hence $\X\models\exists x\varphi$.
    \end{enumerate}
\end{proof}

\subsection{Properties of Probabilistic Teams}\label{subsec:properties-of-probabilistic-teams}

By simply replacing the symbol $\perp$ by the symbol $\probind$, we get the probabilistic versions of the previously introduced possibilistic team properties of empirical and hidden-variable teams. These are in line with the definitions in~\cite{MR2443068}, with the exception of no-signalling and parameter independence which suffer from the same weakness as their possibilistic counterparts and which we have generalized here.

\begin{definition}[Probabilistic Team Properties]
    \quad
    \label{Definition: Probabilistic Team Properties}
    \begin{enumerate}
        \item A probabilistic empirical team $\X$ supports \emph{probabilistic no-signalling} if it satisfies the formula
        \begin{equation}
            \label{eq: Probabilistic No-Signalling}
            \bigwedge_{I\subseteq n}\{x_i \mid i\notin I\}\probind_{\{x_i \mid i\in I\}\vec z}\{y_i \mid i\in I\}. \tag{PNS}
        \end{equation}

        \item A probabilistic hidden-variable team $\X$ supports \emph{probabilistic weak determinism} if it satisfies the formula
        \begin{equation}
            \label{eq: Probabilistic Weak Determinism}
            \bigwedge_{i<n} \=({\vec{x}\vec{z}},y_i). \tag{PWD}
        \end{equation}

        \item A probabilistic hidden-variable team $\X$ supports \emph{probabilistic strong determinism} if it satisfies the formula
        \begin{equation}
            \label{eq: Probabilistic Strong Determinism}
            \bigwedge_{i<n} \=({x_i\vec{z}},y_i). \tag{PSD}
        \end{equation}

        \item A probabilistic hidden-variable team $\X$ supports \emph{probabilistic single-valuedness} if it satisfies the formula
        \begin{equation}
            \label{eq: Probabilistic Single-Valuedness}
            \=(\vec{z}). \tag{PSV}
        \end{equation}

        \item A probabilistic hidden-variable team $\X$ supports \emph{probabilistic $\vz$-inde\-pend\-ence} if it satisfies the formula
        \begin{equation}
            \label{eq: Probabilistic z-Independence}
            \vec{z}\probind\vec{x}. \tag{P$z$I}
        \end{equation}

        \item A probabilistic hidden-variable team $\X$ supports \emph{probabilistic parameter independence} if it satisfies the formula
        \begin{equation}
            \label{eq: Probabilistic Parameter Independence}
            \bigwedge_{I\subseteq n}\{x_i \mid i\notin I\}\probind_{\{x_i \mid i\in I\}\vec z}\{y_i \mid i\in I\}. \tag{PPI}
        \end{equation}

        \item  A probabilistic hidden-variable team $\X$ supports \emph{probabilistic outcome independence} if it satisfies the formula
        \begin{equation}
            \label{eq: Probabilistic Outcome Independence}
            \bigwedge_{i<n} y_i\probind_{\vec{x}\vec{z}}\{y_j \mid j\neq i\}. \tag{POI}
        \end{equation}
    \end{enumerate}

    As we did not have a syntactic formula for locality, we need to give an explicit semantic definition for probabilistic locality as well.

    \begin{enumerate}[resume]
        \item A probabilistic hidden-variable team $\X$ supports \emph{probabilistic locality} if for all $\vec{a}$, $\vec{b}$ and $\vec{\gamma}$ we have
        \[
            \abs{\X_{\vec{x}\vec{y}\vec{z}=\vec{a}\vec{b}\vec{\gamma}}}\prod_{i<n}\abs{\X_{x_i\vec{z} = a_i \vec{\gamma}}} = \abs{\X_{\vec{x}\vec{z}=\vec{a}\vec{\gamma}}}\prod_{i<n}\abs{\X_{x_i y_i \vec{z} = a_i b_i \vec{\gamma}}}.
        \]
    \end{enumerate}
\end{definition}

Lemma~\ref{Lemma: Locality is equivalent to parameter independence + outcome independence}, stating that locality is equivalent to the conjunction of parameter and outcome independence, remains true in the probabilistic world, at least with the simpler definition of parameter independence from~\cite{MR2443068}.

\begin{lemma}
    Probabilistic locality is equivalent to the formula
    \[
        \bigwedge_{i<n}\left(\{ x_j \mid j\neq i \}\probind_{x_i\vec{z}}y_i \land y_i\probind_{\vec{x}\vec{z}}\{y_j \mid j\neq i\}\right).
    \]
\end{lemma}
\begin{proof}
    Essentially proved in~\cite{MR2443068}.
\end{proof}
We conjecture that even with the more general definition of parameter independence, probabilistic locality is equivalent to the conjunction of probabilistic parameter independence and probabilistic outcome independence, as it is in the possibilistic case.

\begin{corollary}
    \label{Corollary: Probabilistic implies possibilistic}
    For any of the properties is Definition~\ref{Definition: Probabilistic Team Properties}, if a probabilistic team supports it, then the possibilistic collapse supports the corresponding possibilistic property.
\end{corollary}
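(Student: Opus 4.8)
The plan is to split the eight properties of Definition~\ref{Definition: Probabilistic Team Properties} into two groups according to whether they are given syntactically as formulas of probabilistic independence logic or only semantically. Properties (i)--(vii) are each a formula of probabilistic independence logic---indeed a conjunction of probabilistic independence and dependence atoms---so for these the corollary is an immediate consequence of Proposition~\ref{Proposition: Truth in independence logic is preserved by possibilistic collapse}. Concretely, if $\X$ supports one of these probabilistic properties, say $\X\models\varphi$ where $\varphi$ is the relevant conjunction of $\probind$-atoms, then that proposition yields $X\models\varphi^*$ for the possibilistic collapse $X$, where $\varphi^*$ replaces each $\probind$ by $\perp$; and $\varphi^*$ is precisely the formula defining the corresponding possibilistic property (no-signalling, weak/strong determinism, single-valuedness, $z$-independence, parameter independence, outcome independence). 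So no real work is required in this group beyond invoking Proposition~\ref{Proposition: Truth in independence logic is preserved by possibilistic collapse} uniformly.

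The one property that requires separate handling is probabilistic locality~(viii), since locality was not given by a formula of the logic but by an explicit product equation, so Proposition~\ref{Proposition: Truth in independence logic is preserved by possibilistic collapse} does not apply to it directly. Here I would route through the two equivalence results already established. By the preceding lemma, probabilistic locality is equivalent to the conjunction of probabilistic parameter independence~\eqref{eq: Probabilistic Parameter Independence} and probabilistic outcome independence~\eqref{eq: Probabilistic Outcome Independence}. Hence if $\X$ supports probabilistic locality, then $\X$ supports both PPI and POI, and these are in the first group, so by the argument above the collapse $X$ supports possibilistic parameter independence and possibilistic outcome independence.

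Finally I would close the loop using Lemma~\ref{Lemma: Locality is equivalent to parameter independence + outcome independence}, which states that (possibilistic) locality is equivalent to the conjunction of possibilistic parameter independence and possibilistic outcome independence. Since $X$ supports both of the latter, it supports possibilistic locality, completing the locality case and hence the corollary. The proof is therefore essentially a bookkeeping argument: the only point needing care is that locality is defined semantically rather than syntactically, and the main---though minor---obstacle is recognising that this gap is bridged by combining the probabilistic equivalence (probabilistic locality $\equiv$ PPI $\wedge$ POI) with the possibilistic equivalence of Lemma~\ref{Lemma: Locality is equivalent to parameter independence + outcome independence}, rather than by a direct manipulation of the product formula.
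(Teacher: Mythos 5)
Your proof is correct and takes essentially the same route as the paper, whose entire proof is the one line ``An immediate consequence of Proposition~\ref{Proposition: Truth in independence logic is preserved by possibilistic collapse}.'' The extra care you take with probabilistic locality---bridging its semantic (non-syntactic) definition by combining the probabilistic equivalence with PPI $\wedge$ POI and the possibilistic Lemma~\ref{Lemma: Locality is equivalent to parameter independence + outcome independence}---is precisely the bookkeeping the paper's one-line proof leaves implicit, and is clearly the intended argument given that those two lemmas are placed immediately around the corollary.
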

\begin{proof}
    An immediate consequence of Proposition~\ref{Proposition: Truth in independence logic is preserved by possibilistic collapse}.
\end{proof}

As by Proposition~\ref{Proposition: Probabilistic soundness theorem} the axioms presented in Section~\ref{subsec:axioms-of-independence-logic} are valid also in the probabilistic setting, all the results from Section~\ref{subsec:relationships-between-the-properties} that were proved from the axioms are true also for probabilistic teams.

\begin{corollary}
    The following hold for probabilistic teams (and more generally for $K$-teams whenever $K$ is a commutative, positive and multiplicatively cancellative semiring):
    \begin{enumerate}
        \item\label{corollary first item} $\=(\vec{x}\vec{z},\vec{y})\models\bigwedge_{i<n}y_i\probind_{\vec{x}\vec{z}}\{y_j \mid j\neq i\}$,
        \item $\=(x_i\vec{z},y_i)\models\{x_j \mid j\neq i\}\probind_{x_i\vec{z}}y_i$,
        \item $\bigwedge_{i<n}\{x_j \mid j\neq i\}\probind_{x_i\vec{z}}y_i\land\=(\vec{x}\vec{z},\vec{y})\ \models\ \bigwedge\=(x_i\vec{z},y_i)$,
        \item $\varphi\models\exists z_0\dots\exists z_{l-1}(\=(\vec{z})\land\varphi)$.
        \item The following formulas are equivalent:
        \begin{enumerate}
            \item $\bigwedge_{i<n}\{x_j \mid j\neq i\}\probind_{x_i} y_i$,
            \item $\tilde\exists z_0\exists z_1\dots\exists z_{l-1} \left( \vec{z}\probind\vec{x} \land \bigwedge_{i<n} \{ x_j \mid j\neq i \}\probind{x_i\vec{z}}y_i \right)$.
        \end{enumerate}
        \item\label{corollary last item} The following formulas are equivalent:
        \begin{enumerate}
            \item $\bigwedge_{I\subseteq n}\{x_i \mid i\notin I\}\probind_{\{x_i\mid i\in I\}}\{y_i \mid i\in I\}$,
            \item $\tilde{\exists}z_0\exists z_1\dots\exists z_{l-1} \left( \vec{z}\probind\vec{x} \land \bigwedge_{I\subseteq n} \{x_i \mid i\notin I\}\probind_{\{x_i\mid i\in I\}\vec{z}}\{y_i\mid i\in I\} \right)$.
        \end{enumerate}
    \end{enumerate}
\end{corollary}

The above \ref{corollary first item}-\ref{corollary last item} may seem like somewhat arbitrary observations. However, let us recall that they arise from examples motivated by quantum mechanics and each one of them has an intuitive interpretation in physics. It would seem more satisfactory to present a systematic study of such logical consequences and equivalences but we have already observed that it would be a formidable task bordering the impossible.

\subsection{Building Probabilistic Teams}\label{subsec:building-probabilistic-teams}

As we saw in section~\ref{subsec:properties-of-probabilistic-teams}, properties of probabilistic teams are inherited by their possibilistic collapses. Here we prove results concerning the question to what extent the converse holds: when can one construct a probabilistic team out of a possibilistic one, with the same properties?

Following~\cite{MR3038040}, we proceed to show that some no-signalling teams have no probabilistic realization that would also support probabilistic no-signalling.

This also shows that $\varphi\models\PR\varphi$ is not true for all formulas $\varphi$ of independence logic.

\begin{proposition}
    \label{Proposition: Possibilistic no-signalling does not imply probabilistic no-signalling}
    Suppose that $n=2$. There is an empirical team $X$ supporting no-signalling such that there is no probabilistic team $\X$ that supports probabilistic no-signalling and whose possibilistic collapse is $X$, i.e.
    \[
        \bigwedge_{i<n}\{x_j \mid j\neq i\}\perp_{x_i}y_i\ \not\models\ \PR\bigwedge_{i<n}\{x_j \mid j\neq i\}\perp_{x_i}y_i.
    \]
\end{proposition}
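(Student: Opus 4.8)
The plan is to exhibit a single finite counterexample, already for $n=2$ with binary inputs and outputs. I would take $X$ to be the empirical team displayed below, described most easily by its four fibres over the inputs: over $(x_0,x_1)=(0,0)$ and over $(1,1)$ the outcome set is the diagonal $\{(0,0),(1,1)\}$; over $(0,1)$ it is $\{(0,0),(0,1),(1,0)\}$ (so the outcome $(1,1)$ is forbidden); and over $(1,0)$ it is $\{(0,1),(1,0),(1,1)\}$ (so the outcome $(0,0)$ is forbidden).
\begin{center}
  \begin{tabular}{c|cccc}
    & $x_0$ & $x_1$ & $y_0$ & $y_1$ \\
    \hline
    & $0$ & $0$ & $0$ & $0$ \\
    & $0$ & $0$ & $1$ & $1$ \\
    & $0$ & $1$ & $0$ & $0$ \\
    & $0$ & $1$ & $0$ & $1$ \\
    & $0$ & $1$ & $1$ & $0$ \\
    & $1$ & $0$ & $0$ & $1$ \\
    & $1$ & $0$ & $1$ & $0$ \\
    & $1$ & $0$ & $1$ & $1$ \\
    & $1$ & $1$ & $0$ & $0$ \\
    & $1$ & $1$ & $1$ & $1$
  \end{tabular}
\end{center}
First I would check that $X$ supports no-signalling. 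Since all four inputs occur, the atom $\{x_1\}\perp_{x_0}y_0$ holds precisely when the set of possible values of $y_0$ depends only on $x_0$, and symmetrically for $\{x_0\}\perp_{x_1}y_1$; in $X$ every fibre projects onto $\{0,1\}$ in both the $y_0$- and the $y_1$-coordinate, so both atoms hold. This step is routine.

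The heart of the argument is to show that no probabilistic realization of $X$ supports probabilistic no-signalling. I would suppose for contradiction that $\X$ is a probabilistic team with $\support\X=X$ and $\X\models\bigwedge_{i<2}\{x_j\mid j\neq i\}\probind_{x_i}y_i$. As all four inputs carry positive probability, the two probabilistic independence atoms assert exactly that the conditional distribution of $y_0$ given $x_0$ is independent of $x_1$, and that of $y_1$ given $x_1$ is independent of $x_0$. Write $p,q$ for the conditional probabilities that $y_0=1$ given $x_0=0$ and $x_0=1$, and $r,s$ for the conditional probabilities that $y_1=1$ given $x_1=0$ and $x_1=1$; these are well defined by no-signalling. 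Then I would read off the four fibres, using that $\support\X=X$ forces each displayed outcome to have strictly positive conditional probability and each absent one probability $0$. The two diagonal fibres give $p=r$ and $q=s$. Over $(0,1)$ the present outcome $(0,0)$ has conditional probability $1-p-s$, so positivity gives $p+s<1$. Over $(1,0)$ the present outcome $(1,1)$ has conditional probability $q+r-1$, so positivity gives $q+r>1$. Substituting $r=p$ and $s=q$ yields $p+q<1$ and $p+q>1$, a contradiction; hence no such $\X$ exists. Since $X$ supports no-signalling, this also witnesses that $\varphi\not\models\PR\varphi$ for $\varphi=\bigwedge_{i<n}\{x_j\mid j\neq i\}\perp_{x_i}y_i$.

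The creative step, and the only real obstacle, is the design of $X$ rather than its verification: one must arrange the fibres so that possibilistic no-signalling holds (all one-variable marginal supports agree across the other setting) while the strict positivity forced by $\support\X=X$ collides with the no-signalling equalities. The mechanism is to use the two diagonal fibres to force the marginal equalities $p=r$ and $q=s$, and the two forbidden-corner fibres to force the strict inequalities $p+s<1$ and $q+r>1$ in opposite senses, positioned over the inputs so that the shared marginals identify them into the impossible pair $p+q<1$ and $p+q>1$. Once such a gadget is found, everything else is elementary linear arithmetic.
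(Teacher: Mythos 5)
Your proof is correct: the ten-row team does support possibilistic no-signalling (every input fibre has full $\{0,1\}$ projections in both outcome coordinates), and the contradiction argument is sound, including the key observation that positivity of all four input pairs is what turns the atoms $x_1\probind_{x_0}y_0$ and $x_0\probind_{x_1}y_1$ into the statement that the conditional distribution of $y_i$ depends only on $x_i$. Your route coincides with the paper's at the strategic level — an explicit finite counterexample with $n=2$ and binary values, in which strict positivity on the support collides with the no-signalling equalities — but the gadget and the algebra are genuinely different. The paper's team has twelve rows, with every fibre of size three: the outcome $(1,0)$ is forbidden over inputs $(0,0)$ and $(1,1)$, while $(0,1)$ is forbidden over $(0,1)$ and $(1,0)$. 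Writing $p_0,\dots,p_{11}$ for the row probabilities, probabilistic no-signalling there yields four quadratic equations such as $p_2p_3=(p_0+p_1)(p_4+p_5)$, and the contradiction is reached by a chain of multiplications of these equations against strict inequalities. Your team is smaller and its mechanism is more legible: the two diagonal fibres force the marginal identifications $p=r$ and $q=s$ outright, and the two forbidden corners convert support-positivity into the linear constraints $p+s<1$ and $q+r>1$, which the identifications collapse into the impossible pair $p+q<1$ and $p+q>1$. What your approach buys is transparency (linear arithmetic in four conditional probabilities instead of a multiplicative manipulation of twelve unknowns) and a smaller witness; the paper's specific team has no compensating advantage for this particular statement — indeed the paper itself remarks that a minimal example can be imported from~\cite{MR3552129} — so your argument is a perfectly good, arguably cleaner, substitute.
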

\begin{proof}
    We let $X = \{s_0,\dots,s_{11}\}$, where the assignments $s_i$ are as follows:
    \begin{center}
        \begin{tabular}{|c|cccc|}
            \hline
            & $x_0$ & $x_1$ & $y_0$ & $y_1$ \\
            \hline
            $s_0$ & $0$   & $0$   & $0$   & $0$   \\
            $s_1$ & $0$   & $0$   & $0$   & $1$   \\
            $s_2$ & $0$   & $0$   & $1$   & $1$   \\
            $s_3$ & $0$   & $1$   & $0$   & $0$   \\
            $s_4$ & $0$   & $1$   & $1$   & $0$   \\
            $s_5$ & $0$   & $1$   & $1$   & $1$   \\
            \hline
        \end{tabular}
        \begin{tabular}{|c|cccc|}
            \hline
            & $x_0$ & $x_1$ & $y_0$ & $y_1$ \\
            \hline
            $s_6$    & $1$   & $0$   & $0$   & $0$   \\
            $s_7$    & $1$   & $0$   & $1$   & $0$   \\
            $s_8$    & $1$   & $0$   & $1$   & $1$   \\
            $s_9$    & $1$   & $1$   & $0$   & $0$   \\
            $s_{10}$ & $1$   & $1$   & $0$   & $1$   \\
            $s_{11}$ & $1$   & $1$   & $1$   & $1$   \\
            \hline
        \end{tabular}
    \end{center}
    It is straightforward to check that $X$ supports no-signalling. Suppose for a contradiction that $\X$ is a probabilistic team that supports probabilistic no-signalling and whose possibilistic collapse is $X$. Then there are positive numbers $p_0,\dots,p_{11}$ with $\sum_{i<12}p_i = 1$ such that $\X(s_i)=p_i$ for all $i<12$. By probabilistic no-signalling,
    \[
        \X\models x_{1-i}\probind_{x_i} y_i,
    \]
    for all $i\in\{0,1\}$, so we have, for all $a,b,c,i\in\{0,1\}$,
    \[
        \abs{\X_{x_{1-i} x_i = ab}}\cdot\abs{\X_{x_i y_i = bc}} = \abs{\X_{x_{1-i} x_i y_i = abc}}\cdot\abs{\X_{x_i=b}}.
    \]
    Calculating the marginal probabilities and applying the above condition, we get the following four equations:
    \begin{enumerate}
        \item $p_2 p_3 = (p_0 + p_1)(p_4 + p_5)$,
        \item $p_0 p_8 = (p_1 + p_2)(p_6 + p_7)$,
        \item $p_6 p_{11} = (p_7 + p_8)(p_9 + p_{10})$, and
        \item $p_5 p_9 = (p_3 + p_4)(p_{10} + p_{11})$.
    \end{enumerate}
    From this, using the third and the fourth equations, we get
    \[
        p_6 p_{11} p_5 p_9 = (p_7 + p_8)(p_9 + p_{10})(p_3 + p_4)(p_{10} + p_{11}) > p_8 p_9 p_3 p_{11},
    \]
    whence $p_5 p_6 > p_3 p_8$. Then by multiplying by $p_2$ and using the first equation, we get
    \[
        p_2 p_5 p_6 > p_2 p_3 p_8 = (p_0 + p_1)(p_4 + p_5)p_8 > p_0 p_5 p_8,
    \]
    whence $p_2 p_6 > p_0 p_8$. Then finally, using the second equation, we get
    \[
        p_2 p_6 > p_0 p_8 = (p_1 + p_2)(p_6 + p_7) > p_2 p_6,
    \]
    which is a contradiction.
\end{proof}

A minimal example of a possibilistic no-signalling team which is not a collapse of any probabilistic no-signalling team can be obtained by translating an example of~\cite{MR3552129}---which occurs as part of a discussion about the question whether there exists an intrinsic characterization of the class of no-signalling teams that are collapses of probabilistic no-signalling teams---into our team-semantic framework.

The next property of empirical and hidden-variable teams, \emph{measurement locality}, was introduced by \samson in~\cite{MR3038040}. Measurement locality states that the measurement variables are mutually independent of each other.

\begin{definition}[Measurement Locality]
    An empirical team $X$ supports \emph{measurement locality} if it satisfies the formula
    \begin{equation}
        \label{eq: Measurement Locality}
        \bigwedge_{i<n} x_i\perp\{x_j \mid j\neq i\}. \tag{ML}
    \end{equation}

    A hidden-variable team $X$ supports measurement locality if it satisfies the formula
    \begin{equation}
        \bigwedge_{i<n} x_i\perp_{\vec{z}}\{x_j \mid j\neq i\}. \tag{ML}
    \end{equation}
\end{definition}

\begin{definition}[Probabilistic Measurement Locality]
    A probabilistic empirical team $\X$ supports \emph{probabilistic measurement locality} if it satisfies the formula
    \begin{equation}
        \label{eq: Probabilistic Measurement Locality}
        \bigwedge_{i<n} x_i\probind\{x_j \mid j\neq i\}. \tag{PML}
    \end{equation}

    A probabilistic hidden-variable team $\X$ supports probabilistic measurement locality if it satisfies the formula
    \begin{equation}
        \bigwedge_{i<n} x_i\probind_{\vec{z}}\{x_j \mid j\neq i\}. \tag{PML}
    \end{equation}
\end{definition}

\begin{corollary}
    Whenever a probabilistic team supports probabilistic measurement locality, the possibilistic collapse supports measurement locality.
\end{corollary}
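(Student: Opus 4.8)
The plan is to argue exactly as in the proof of Corollary~\ref{Corollary: Probabilistic implies possibilistic}: this statement is an immediate consequence of Proposition~\ref{Proposition: Truth in independence logic is preserved by possibilistic collapse}. The only reason it is not already literally subsumed by that earlier corollary is that measurement locality is introduced after Definition~\ref{Definition: Probabilistic Team Properties}, but the argument transfers verbatim, so I would simply record it as a separate corollary appealing to the same proposition.

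First I would observe that the formula defining probabilistic measurement locality---namely $\bigwedge_{i<n} x_i\probind\{x_j \mid j\neq i\}$ in the empirical case and $\bigwedge_{i<n} x_i\probind_{\vec{z}}\{x_j \mid j\neq i\}$ in the hidden-variable case---is a conjunction of probabilistic (conditional) independence atoms. Rewriting each $\probind$ as $\perp$ yields precisely the formula defining measurement locality, and this rewritten formula is a bona fide formula of independence logic, so it falls within the scope of Proposition~\ref{Proposition: Truth in independence logic is preserved by possibilistic collapse}. This is the only thing that needs checking, and it is immediate from the definitions.

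Next, letting $\X$ be a probabilistic empirical (resp.\ hidden-variable) team that supports probabilistic measurement locality and letting $X$ denote its possibilistic collapse, I would apply Proposition~\ref{Proposition: Truth in independence logic is preserved by possibilistic collapse} directly to the formula above. Since $\X\models\varphi$ implies $X\models\varphi$ for every formula $\varphi$ of independence logic, and since the independence-atom case of that proposition is exactly what turns each satisfied $\probind$-atom into the corresponding satisfied $\perp$-atom on the collapse, we conclude that $X$ satisfies $\bigwedge_{i<n} x_i\perp\{x_j \mid j\neq i\}$ (resp.\ $\bigwedge_{i<n} x_i\perp_{\vec{z}}\{x_j \mid j\neq i\}$), i.e.\ $X$ supports measurement locality.

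There is no genuine obstacle here: all the content is carried by the already-established Proposition~\ref{Proposition: Truth in independence logic is preserved by possibilistic collapse}, and the proof reduces to noting that the two notions of measurement locality are the probabilistic and possibilistic readings of one and the same conjunction of independence atoms.
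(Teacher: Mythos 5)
Your proof is correct and follows exactly the paper's own argument: the paper also dispatches this corollary as an immediate consequence of Proposition~\ref{Proposition: Truth in independence logic is preserved by possibilistic collapse}, with the same observation that measurement locality is just a conjunction of independence atoms in both its probabilistic and possibilistic readings.
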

\begin{proof}
    An immediate consequence of Proposition~\ref{Proposition: Truth in independence logic is preserved by possibilistic collapse}.
\end{proof}

\begin{definition}
    Given sets $A=\prod_{i<n}A_i$ and $B=\prod_{i<n}B_i$ and a probability distribution $p_{\vec{a}}$ on $B$ for each $\vec{a}\in A$, we say that a probabilistic empirical team $\X$ is a \emph{uniform joint distribution} of the outcome distribution family $\{p_{\vec{a}} \mid \vec{a}\in A\}$ if the value domain of $\X$ is $\bigcup_{i<n}(A_i\cup B_i)$ and $\X(s) = p_{s(\vec{x})}(s(\vec{y}))/\abs{A}$ whenever $s(x_i)\in A_i$ and $s(y_i)\in B_i$ for all $i<n$, and $\X(s)=0$ otherwise.

    Similarly, given a set $\Gamma$ of possible hidden-variable values and outcome distributions $p_{\vec{a}\vec{\gamma}}$ on $B$ for $\vec{a}\in A$ and $\vec{\gamma}\in\Gamma$, we say that a probabilistic hidden-variable team $\X$ is a uniform joint distribution of the outcome distribution family if $X(s)=p_{s(\vec{x}\vec{z})}(s(\vec{y}))/\abs{A\times\Gamma}$.
\end{definition}

\begin{proposition}
    \label{Lemma: Measurement locality is automatically supported by certain teams}
    A uniform joint distribution of an outcome distribution family supports probabilistic measurement locality.
\end{proposition}
\begin{proof}
    First observe that
    \begin{align*}
        \abs{\X_{x_i\vec{z} = a_i\vec{\gamma}}} &= \sum_{\substack{\vec{c}\in A \\ c_i=a_i}}\sum_{\vec{b}\in B}\frac{p_{\vec{c}\vec{\gamma}}(\vec{b})}{\abs{A\times\Gamma}} = \sum_{\substack{\vec{c}\in A \\ c_i=a_i}}\frac{1}{\abs{A}\abs{\Gamma}} \\
        &= \frac{1}{\abs{A}\abs{\Gamma}}\abs{\{\vec{c}\in A \mid c_i=a_i\}} \\
        &= \frac{1}{\abs{\Gamma}\prod_{j<n}\abs{A_j}}\prod_{\substack{j<n \\ j\neq i}}\abs{A_j} \\
        &= \frac{1}{\abs{\Gamma}\abs{A_i}}.
    \end{align*}
    Then we have
    \begin{align*}
        \abs{\X_{\vec{x}\vec{z} = \vec{a}\vec{\gamma}}}\cdot\abs{\X_{\vec{z}=\vec{\gamma}}}^{n-1} &= \left( \sum_{\vec{b}\in B} \frac{p_{\vec{a}\vec{\gamma}}(\vec{b})}{\abs{A\times\Gamma}} \right)\left( \sum_{\vec{c}\in A}\sum_{\vec{b}\in B}\frac{p_{\vec{c}\vec{\gamma}}(\vec{b})}{\abs{A\times\Gamma}} \right)^{n-1} \\
        &= \frac{1}{\abs{A}\abs{\Gamma}}\left( \sum_{\vec{c}\in A}\frac{p_{\vec{c}\vec{\gamma}}(\vec{b})}{\abs{A}\abs{\Gamma}} \right)^{n-1} = \frac{1}{\abs{A}\abs{\Gamma}}\cdot\left( \frac{\abs{A}}{\abs{A}\abs{\Gamma}} \right)^{n-1} \\
        &= \frac{1}{\abs{A}\abs{\Gamma}^n} = \frac{1}{\abs{\Gamma}^n}\prod_{i<n}\frac{1}{\abs{A_i}} = \prod_{i<n}\frac{1}{\abs{\Gamma}\abs{A_i}} \\
        &= \prod_{i<n}\abs{\X_{x_i\vz = a_i\vec\gamma}}.
    \end{align*}
    Now we observe the following fact that is easy to prove by induction on $n$:
    a~probabilistic team $\Y$ satisfies the formula $\bigwedge_{i<n} v_i\probind_{\vec{u}}\{v_j \mid j\neq i\}$
    if and only if for all $\vec{a}$ and $\vec{b}$,
    \[
        \abs{\Y_{\vec{v}\vec{u}=\vec{a}\vec{b}}}\cdot\abs{\Y_{\vec{u}=\vec{b}}}^{n-1} = \prod_{i<n}\abs{\Y_{v_i \vec{u} = a_i \vec{b}}}.
    \]
    Each $v_i$ can also be replaced by a tuple of variables.
    From this and the above calculations, it follows that $\X\models x_i\probind_{\vec{z}}\{x_j \mid j\neq i\}$.
\end{proof}

Next we show that there is a canonical way of constructing a probabilistic team out of a possibilistic hidden-variable team that supports $\vz$-independence, and that such a probabilistic team will support locality, measurement locality and $\vz$-independence if its possibilistic collapse does.

\begin{definition}
    Given a hidden-variable team $X$ that supports $\vz$-independence, we define the probabilistic hidden-variable team $\probabilisticLift{X}$ as follows. Denote
    \begin{align*}
        \Gamma &= \{ s(\vec{z}) \mid s\in X \}, \\
        M &= \{ s(\vec{x}) \mid s\in X \}, \\
        O_{\vec{a},\vec{\gamma}} &= \{ s(\vec{y}) \mid s\in X, s(\vec{x}\vec{z}) = \vec{a}\vec{\gamma} \},
    \end{align*}
    $\mh = \abs{\Gamma}$, $\mm = \abs{M}$, and $\mo(\vec{a},\vec{\gamma}) = \abs{O_{\vec{a},\vec{\gamma}}}$. We then define $\probabilisticLift{X}$ by setting
    \[
        \probabilisticLift{X}(s) =
        \begin{cases}
            \displaystyle\frac{1}{\mh\cdot\mm\cdot\mo(s(\vec{x}),s(\vec{z}))} & \text{if $s(\vec{x})\in M$ and $s(\vec{z})\in\Gamma$}, \\
            0 & \text{otherwise}.
        \end{cases}
    \]
\end{definition}

\begin{lemma}
    \label{Lemma: Probabilistic lift is well-defined}
    $\probabilisticLift{X}$ is well-defined and $\support\probabilisticLift{X} = X$.
\end{lemma}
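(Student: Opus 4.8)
The plan is to verify the two defining properties of a probability distribution: non-negativity and total mass one. Non-negativity is immediate provided the denominators never vanish, so the real substance of the argument is (a) that $\mo(s(\vec{x}),s(\vec{z}))\neq 0$ whenever $s$ lies in the support, and (b) that the values sum to $1$. Both points rest entirely on the hypothesis that $X$ supports $z$-independence, i.e. $X\models\vec{z}\perp\vec{x}$, and I would make this dependence explicit at each step.

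First I would record the consequence of $z$-independence that we need. Unfolding the semantics of the atom $\vec{z}\perp\vec{x}$, for any $s,s'\in X$ there is some $s''\in X$ with $s''(\vec{z})=s(\vec{z})$ and $s''(\vec{x})=s'(\vec{x})$. Hence for every $\vec{a}\in M$ and every $\vec{\gamma}\in\Gamma$ — choosing witnesses $s',s\in X$ with $s'(\vec{x})=\vec{a}$ and $s(\vec{z})=\vec{\gamma}$ — there is $s''\in X$ with $s''(\vec{x}\vec{z})=\vec{a}\vec{\gamma}$. In particular $O_{\vec{a},\vec{\gamma}}\neq\emptyset$, so $\mo(\vec{a},\vec{\gamma})\geq 1$ for all $(\vec{a},\vec{\gamma})\in M\times\Gamma$. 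This guarantees that the first case of the definition never divides by zero, establishing (a).

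Next I would compute the total mass by partitioning the support according to the value of $(\vec{x},\vec{z})$. Since the support of $\probabilisticLift{X}$ is exactly $X$, and an assignment $s\in X$ with $s(\vec{x}\vec{z})=\vec{a}\vec{\gamma}$ is determined precisely by its value $s(\vec{y})\in O_{\vec{a},\vec{\gamma}}$, there are exactly $\mo(\vec{a},\vec{\gamma})$ such assignments, each receiving probability $1/(\mh\cdot\mm\cdot\mo(\vec{a},\vec{\gamma}))$. Summing over these gives a contribution $1/(\mh\cdot\mm)$ for each pair $(\vec{a},\vec{\gamma})$. By the previous paragraph every one of the $\mm\cdot\mh$ pairs in $M\times\Gamma$ is actually realized, so
\[
    \sum_{s} \probabilisticLift{X}(s)
    = \sum_{\vec{a}\in M}\ \sum_{\vec{\gamma}\in\Gamma}\ \sum_{\substack{s\in X\\ s(\vec{x}\vec{z})=\vec{a}\vec{\gamma}}} \frac{1}{\mh\cdot\mm\cdot\mo(\vec{a},\vec{\gamma})}
    = \sum_{\vec{a}\in M}\sum_{\vec{\gamma}\in\Gamma}\frac{1}{\mh\cdot\mm}
    = \mm\cdot\mh\cdot\frac{1}{\mh\cdot\mm}
    = 1,
\]
using $\abs{M}=\mm$ and $\abs{\Gamma}=\mh$. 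This establishes (b) and completes the verification.

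I expect the main obstacle to be the bookkeeping around which pairs $(\vec{a},\vec{\gamma})$ genuinely occur: the cancellation of $\mo(\vec{a},\vec{\gamma})$ against the number of summands works cleanly only because every pair in $M\times\Gamma$ is realized in $X$. Without $z$-independence one could have $\vec{a}\in M$ and $\vec{\gamma}\in\Gamma$ with $O_{\vec{a},\vec{\gamma}}=\emptyset$, which would both threaten the definition with division by zero and drop the total mass strictly below $1$; so the content of the lemma is really that $z$-independence forces the support to be a ``full rectangle'' over $M\times\Gamma$.
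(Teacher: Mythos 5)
Your proof is correct and takes essentially the same route as the paper's: $z$-independence forces every pair in $M\times\Gamma$ to be realized in $X$, which simultaneously rules out division by zero and makes each of the $\mm\cdot\mh$ marginals $\abs{\probabilisticLift{X}_{\vec{x}\vec{z}=\vec{a}\vec{\gamma}}}$ equal exactly $1/(\mh\mm)$, so the total mass is $1$. The paper's own proof is the same two-step computation, and it shares your (correct) implicit reading that the positive case of the definition applies only to assignments $s\in X$.
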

\begin{proof}
    First, as $X$ supports $\vz$-independence, for every $s,s'\in X$ we can find $s''\in X$ with $s''(\vec{x})=s(\vec{x})$ and $s''(\vec{z})=s'(\vec{z})$, and thus, given an assignment $s$, the condition
    \[
        s(\vec{x})\in M \text{ and } s(\vec{z})\in\Gamma
    \]
    implies that there is some $s'\in X$ with $s'(\vec{x}\vec{z})=s(\vec{x}\vec{z})$ and thus the number $\mo(s(\vec{x}),s(\vec{z}))$ is non-zero. Hence $\probabilisticLift{X}$ is well-defined as a function. What is left to show is that $\probabilisticLift{X}$ is a probability distribution. Let us notice that for each $\vec{a}\vec{b}\vec{\gamma}$, the probability of the assignment $\vec{x}\vec{y}\vec{z}\mapsto\vec{a}\vec{b}\vec{\gamma}$ does not depend on $\vec{b}$, so each assignment $s$ with $s(\vec{x}\vec{z})=\vec{a}\vec{\gamma}$ has an equal probability, which is $1/(\mh\mm\mo(\vec{a},\vec{\gamma}))$, and thus the joint probability of such assignments is
    \begin{align*}
        \abs{\probabilisticLift{X}_{\vec{x}\vec{z}=\vec{a}\vec{\gamma}}} &= \sum_{\vec{b}}\probabilisticLift{X}(\vec{x}\vec{y}\vec{z}\mapsto\vec{a}\vec{b}\vec{\gamma}) = \frac{\mo(\vec{a},\vec{\gamma})}{\mh\mm\mo(\vec{a},\vec{\gamma})} = \frac{1}{\mh\mm}.
    \end{align*}
    This, in turn, does not depend on $\vec{a}$ or $\vec{\gamma}$. Also, by $\vz$-independence we have $\abs{\{s \mid s(\vec{x}\vec{z}) = \vec{a}\vec{\gamma} \}} = \mm\cdot\mh$. Thus
    \begin{align*}
        \sum_{s\in X} \probabilisticLift{X}(s) &= \sum_{\vec{a}\vec{b}\vec{\gamma}} \probabilisticLift{X}(\vec{x}\vec{y}\vec{z} \mapsto \vec{a}\vec{b}\vec{\gamma}) = \sum_{\vec{a}\vec{\gamma}} \frac{1}{\mh\mm} = \mm\mh\cdot\frac{1}{\mh\mm} = 1.
    \end{align*}
    Thus $\probabilisticLift{X}$ is a well-defined distribution. Clearly the collapse of $\probabilisticLift{X}$ is $X$.
\end{proof}

In contrast to Proposition~\ref{Proposition: Possibilistic no-signalling does not imply probabilistic no-signalling}, we now obtain:

\begin{proposition}
    \label{Proposition: Measurement locality + z-independence + locality implies the probabilistic versions}
    Let $X$ be a hidden-variable team supporting measurement locality, $\vz$-independence and locality. Then $\probabilisticLift{X}$ supports probabilistic measurement locality, probabilistic $\vz$-independence and probabilistic locality whose possibilistic collapse is $X$. Thus, the formula
    \[
        \varphi \coloneqq \vec{z}\perp\vec{x}\land \bigwedge_{i<n}x_i y_i\perp_{\vec{z}}\{x_j y_j \mid j\neq i\}
    \]
    satisfies $\varphi\models\PR\varphi$.
\end{proposition}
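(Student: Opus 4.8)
The plan is to take $\X \coloneqq \probabilisticLift{X}$ as the witnessing probabilistic team. By Lemma~\ref{Lemma: Probabilistic lift is well-defined} this is a well-defined distribution, and since its support is exactly $X$, its possibilistic collapse is $X$; so everything reduces to verifying that $\X$ supports the three probabilistic properties. The conceptual heart of the argument is that the possibilistic hypotheses force the underlying measurement and outcome sets to factor as products, after which the uniform weights defining $\probabilisticLift{X}$ factor as well, which is exactly what probabilistic independence demands.

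First I would record two structural consequences of the hypotheses on $X$. Measurement locality together with $z$-independence gives that the measurement set factors as $M = \prod_{i<n} M_i$, where $M_i = \{s(x_i) \mid s\in X\}$: fixing $\vec{z}$, the admissible $\vec{x}$-tuples form a product by measurement locality, and by $z$-independence this product is the same for every value of $\vec{z}$. Locality gives, for each $\vec{a}\in M$ and $\vec{\gamma}\in\Gamma$, the factorization $O_{\vec{a},\vec{\gamma}} = \prod_{i<n} O_i^{a_i,\vec{\gamma}}$, where $O_i^{a_i,\vec{\gamma}} = \{s(y_i) \mid s\in X,\ s(x_i\vec{z}) = a_i\vec{\gamma}\}$; indeed, locality is precisely the statement that any combination of individually admissible outcomes is jointly admissible. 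Hence $\mo(\vec{a},\vec{\gamma}) = \prod_{i<n}\abs{O_i^{a_i,\vec{\gamma}}}$.

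With these factorizations, probabilistic measurement locality is immediate: setting $A_i = M_i$ and letting $p_{\vec{a}\vec{\gamma}}$ be the uniform distribution on $O_{\vec{a},\vec{\gamma}}$ exhibits $\probabilisticLift{X}$ as a uniform joint distribution of an outcome distribution family, so Lemma~\ref{Lemma: Measurement locality is automatically supported by certain teams} applies. Probabilistic $z$-independence I would verify by a direct marginal computation: from $\abs{\probabilisticLift{X}_{\vec{x}\vec{z}=\vec{a}\vec{\gamma}}} = 1/(\mh\mm)$ (established inside the proof of Lemma~\ref{Lemma: Probabilistic lift is well-defined}) and $z$-independence one gets $\abs{\probabilisticLift{X}_{\vec{x}=\vec{a}}} = 1/\mm$ and $\abs{\probabilisticLift{X}_{\vec{z}=\vec{\gamma}}} = 1/\mh$ for $\vec{a}\in M$, $\vec{\gamma}\in\Gamma$, whence $\abs{\X_{\vec{x}=\vec{a}}}\cdot\abs{\X_{\vec{z}=\vec{\gamma}}} = \abs{\X_{\vec{x}\vec{z}=\vec{a}\vec{\gamma}}}$, which is exactly $\X\models\vec{z}\probind\vec{x}$; the degenerate cases $\vec{a}\notin M$ or $\vec{\gamma}\notin\Gamma$ make both sides vanish.

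The hard part will be probabilistic locality, the genuinely computational step. Using $M = \prod_i M_i$ and the product form of $\mo$, I would compute, by summing the uniform weights over the appropriate fibres and cancelling $\mo$, that $\abs{\X_{x_i\vec{z}=a_i\vec{\gamma}}} = 1/(\mh\abs{M_i})$ and that $\abs{\X_{x_i y_i\vec{z}=a_i b_i\vec{\gamma}}} = 1/(\mh\abs{M_i}\abs{O_i^{a_i,\vec{\gamma}}})$ when $b_i\in O_i^{a_i,\vec{\gamma}}$ (and $0$ otherwise). Substituting these together with $\abs{\X_{\vec{x}\vec{y}\vec{z}=\vec{a}\vec{b}\vec{\gamma}}} = 1/(\mh\mm\mo(\vec{a},\vec{\gamma}))$ and $\abs{\X_{\vec{x}\vec{z}=\vec{a}\vec{\gamma}}} = 1/(\mh\mm)$ into the defining equation of probabilistic locality, both sides reduce, via $\mm = \prod_i\abs{M_i}$ and $\mo(\vec{a},\vec{\gamma}) = \prod_i\abs{O_i^{a_i,\vec{\gamma}}}$, to $1/\bigl(\mh^{\,n+1}(\prod_i\abs{M_i})^2\prod_i\abs{O_i^{a_i,\vec{\gamma}}}\bigr)$, while the degenerate cases (some $\vec{a}\notin M$, $\vec{\gamma}\notin\Gamma$, or $b_i\notin O_i^{a_i,\vec{\gamma}}$) kill both sides. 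Finally, to read off $\X\models\varphi$ in the stated form, I would observe that probabilistic measurement locality and probabilistic locality together yield the second conjunct $\bigwedge_{i<n} x_i y_i\probind_{\vec{z}}\{x_j y_j \mid j\neq i\}$: substituting the measurement-locality product formula of Lemma~\ref{Lemma: Probabilistic mutual independence product formulation} into the probabilistic locality equation and cancelling the common factor $\abs{\X_{\vec{x}\vec{z}=\vec{a}\vec{\gamma}}}$ leaves exactly the mutual-independence product formula for the pairs $x_i y_i$ (again by Lemma~\ref{Lemma: Probabilistic mutual independence product formulation}). Combined with probabilistic $z$-independence this gives $\X\models\varphi$, and therefore $X\models\PR\varphi$.
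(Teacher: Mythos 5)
Your proposal is correct and follows essentially the same route as the paper's proof: take $\X=\probabilisticLift{X}$, use the product factorizations $\mm=\prod_i\mm^i$ and $\mo(\vec{a},\vec{\gamma})=\prod_i\mo^i(a_i,\vec{\gamma})$ forced by the possibilistic hypotheses, get probabilistic measurement locality from the uniform-joint-distribution lemma, and verify probabilistic $z$-independence and probabilistic locality by the same marginal computations. Your explicit last step --- deriving $\bigwedge_{i<n}x_iy_i\probind_{\vec{z}}\{x_jy_j\mid j\neq i\}$ from PML and probabilistic locality via the product-formulation lemma --- is a nice addition the paper leaves implicit; just note that the cancellation of $\abs{\X_{\vec{x}\vec{z}=\vec{a}\vec{\gamma}}}$ there needs the zero case, which is covered since PML forces some $\abs{\X_{x_i\vec{z}=a_i\vec{\gamma}}}$, and hence some $\abs{\X_{x_iy_i\vec{z}=a_ib_i\vec{\gamma}}}$, to vanish whenever $\abs{\X_{\vec{x}\vec{z}=\vec{a}\vec{\gamma}}}=0$.
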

\begin{proof}
    Essentially proved in~\cite{MR3038040}.
\end{proof}

\section{Empirical Teams Arising from Quantum Mechanics}\label{sec:empirical-teams-arising-from-quantum-mechanics}

A team, even what we call an empirical team,  is in itself just an abstract set of assignments.
It does not need to have any ``provenance'', although
in practical applications teams arise from concrete data. In our current context of quantum mechanics,
we use the abstract concept of a team for implications which indeed are totally general and abstract.
However, when it comes to counter-examples demonstrating that some implications are not valid,
the question arises whether our example teams are ``merely'' abstract or whether they can actually
arise in experiments. One of the beauties of quantum physics is that we have a precise mathematical axiomatization of quantum mechanics, essentially due to von Neumann \cite{von2018mathematical}.
This axiomatization is formulated in terms of operators on complex Hilbert spaces. We shall limit our discussion to the finite-dimensional case, where operators can be represented as complex matrices.

\begin{definition}
    Let $M$ and $O$ be sets of $n$-tuples (the ``set of measurements'' and the ``set of outcomes''), and, for $i<n$, denote $M_i = \{a_i \mid \vec{a}\in M \}$ and $O_i = \{ b_i \mid \vec{b}\in O \}$. A \emph{finite-dimensional tensor-product quantum system} of type $(M,O)$ is a tuple
    \[
        \system \coloneqq (\hilbert,(A_i^{a,b})_{a\in M_i, b\in O_i, i<n},\rho),
    \]
    where
    \begin{itemize}
        \item $\hilbert$ is the tensor product $\bigtensor_{i<n}\hilbert_i$ of finite-dimensional complex Hilbert spaces $\hilbert_i$, $i<n$,
        \item for all $i<n$ and $a\in M_i$, $\{A_i^{a,b} \mid b\in O_i \}$ is a positive operator-valued measure (POVM)\footnote{I.e. $A_i^{a,b}$ are positive-definite operators such that $\sum_{b\in O_i}A_i^{a,b} = \id_{\hilbert_i}$ for all $a\in M_i$.} on $\hilbert_i$, and
        \item $\rho$ is a density operator on $\hilbert$ (the ``state of $\system$''), i.e.
        \[
            \rho = \sum_{j<k}p_j\ket{\psi_j}\bra{\psi_j},
        \]
        where $\ket{\psi_j}\in \hilbert$ and $p_j\in[0,1]$ for all $j<k$ and $\sum_{j<k}p_j = 1$.
    \end{itemize}
    For each measurement $\vec{a}\in M$, we define the probability distribution $p^\system_{\vec{a}}$ of outcomes by setting $p^\system_{\vec{a}}(\vec{b}) \coloneqq \trace(\bigtensor_{i<n} A_i^{a_i,b_i}\rho)$, where $\trace(L)$ denotes the trace of the matrix $L$.
\end{definition}

\begin{definition}\label{5.2}
    Let $\X$ be a probabilistic team with variable domain $\Vm\cup\Vo$. Denote $M = \{s(\vec{x}) \mid s\in\support\X \}$ and $O = \{s(\vec{y}) \mid s\in\support\X \}$.
       
        We say that $\X$ is a \emph{finite-dimensional tensor-product quantum-mechanical team} if there exists a finite-dimensional tensor-product quantum system $\system$ of type $(M,O)$ such that for all assignments $s$, we have
        \[
            \X(s)=p^\system_{s(\vec{x})}(s(\vec{y}))/\abs{M}.
        \]
        
    We call an empirical probabilistic team $\X$ a \emph{finite-dimensional tensor-product quantum-mechanical realization} of an empirical possibilistic team $X$ if $X$ is the possibilistic collapse of $\X$ and $\X$ is finite-dimensional tensor-product quantum-mechanical.
\end{definition}

Denote by $\QT$ the set of finite-dimensional tensor-product quantum-mechani\-cally realizable teams.

We can define a new atomic formula $\QR$ such that $X\models\QR$ if $X$ has a finite-dimensional tensor-product quantum realization. In other words, $X\models\QR$ if and only if $X\in\QT$. Then one can ask what kind of properties this atom has. More generally, we can define an operation $\QR$ by
\begin{center}
    $X\models\QR\varphi$ if $X$ has a finite-dimensional tensor-product quantum-mechanical realization $\X$ such that $\X\models\varphi$,
\end{center}
analogously to the operation $\PR$.

One can also ask what kind of property of probabilistic teams being finite-dimensional tensor-product quantum-mechanical is. In~\cite{MR3802381}, Durand et al. showed that probabilistic independence logic (with rational probabilities) is equivalent to a probabilistic variant of existential second-order logic $\mathrm{ESOf}_{\Q}$. Is being finite-dimensional tensor-product quantum-mechanical expressible in $\mathrm{ESOf}_{\R}$ or do we need more expressivity?

We now observe that the set $\QT = \{X \mid X\models\QR\}$ is undecidable but recursively enumerable. For this purpose, we briefly introduce non-local games and then apply a result of Slofstra~\cite{MR3898717}.

\newcommand{\Alice}{\mathrm{A}}
\newcommand{\Bob}{\mathrm{B}}

\begin{definition}\quad
    \begin{enumerate}
        \item Let $I_\Alice$, $I_\Bob$, $O_\Alice$ and $O_\Bob$ be finite sets and let $V\colon O_\Alice\times O_\Bob\times I_\Alice\times I_\Bob\to\{0,1\}$ be a function\footnote{We write $V(a,b\mid c,d)$ for the function value.}. A \emph{(two-player one-round) non-local game} $G$ with question sets $I_\Alice$ and $I_\Bob$, answer sets $O_\Alice$ and $O_\Bob$ and decision predicate $V$ is defined as follows: the first player (Alice) receives an element $c\in I_\Alice$ and the second player (Bob) receives an element $d\in I_\Bob$. Alice returns an element $a\in O_\Alice$ and Bob returns an element $b\in O_\Bob$. The players are not allowed to communicate the received inputs or their chosen outcomes to each other. The players win if $V(a,b\mid c,d)=1$ and lose otherwise.

        \item Let $G$ be a non-local game. A strategy for $G$ is a function $p\colon O_\Alice\times O_\Bob\times I_\Alice\times I_\Bob\to[0,1]$ such that for each pair $(c,d)\in I_\Alice\times I_\Bob$ the function $(a,b)\mapsto p(a,b\mid c,d)$ is a probability distribution. A strategy $p$ is \emph{perfect} if $V(a,b\mid c,d)=0$ implies $p(a,b\mid c,d)=0$.

        \item Let $G$ be a non-local game and $p$ a strategy for $G$. We say that $p$ is a \emph{quantum strategy} if there are finite-dimensional Hilbert spaces $H_\Alice$ and $H_\Bob$, a quantum state $\rho$ of $H_\Alice\tensor H_\Bob$, a POVM $(M^c_a)_{a\in O_\Alice}$ on $H_\Alice$ for each $c\in I_\Alice$ and a POVM $(N^d_b)_{b\in O_\Bob}$ on $H_\Bob$ for each $d\in I_\Bob$ such that
        \[
            p(a,b\mid c,d) = \trace(M^c_a\tensor N^d_b\rho)
        \]
        for all $(a,b,c,d)\in O_\Alice\times O_\Bob\times I_\Alice\times I_\Bob$.
    \end{enumerate}
\end{definition}

\begin{theorem}[Slofstra~\cite{MR3898717}]
    \label{Theorem: It is undecidable to determine if a non-local game has a perfect quantum strategy}
    It is undecidable to determine whether a non-local game has a perfect quantum strategy.
\end{theorem}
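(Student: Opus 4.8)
The plan is to reduce an undecidable problem from combinatorial group theory to the problem of deciding the existence of a perfect quantum strategy, exploiting the well-known correspondence between \emph{linear system games} (binary constraint system games over the two-element field $\Z_2$) and their associated \emph{solution groups}. Concretely, I would take the linear non-local games to be those given by a system of $\Z_2$-linear equations $Ax=b$: Alice ($\Alice$) receives an equation, Bob ($\Bob$) receives a variable occurring in it, Alice answers with an assignment to all variables of her equation that satisfies it, Bob answers with a value for his variable, and they win exactly when their answers agree on the shared variable. The goal is to show that the set of such systems whose game admits a perfect quantum strategy (in the finite-dimensional sense of the definition above, with $p(a,b\mid c,d)=\trace(M^c_a\tensor N^d_b\rho)$) is not recursive.

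First I would set up the algebraic dictionary. To each system $(A,b)$ one associates the solution group $\Gamma(A,b)$, generated by an involution $g_i$ for each variable together with a central involution $J$, subject to the relations that generators sharing an equation commute and that each equation $\sum_{i\in S}x_i=b_S$ is imposed as $\prod_{i\in S}g_i=J^{b_S}$. The key structural fact, due to Cleve, Liu and Slofstra, is that the game has a perfect quantum strategy if and only if $\Gamma(A,b)$ admits a finite-dimensional unitary representation in which $J\mapsto -\mathrm{Id}$. Thus the quantum-strategy question becomes a question purely about finite-dimensional representations of a finitely presented group, with the distinguished element $J$ to be separated from the identity.

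Next I would prove the embedding theorem, which is the heart of the argument: there is a computable map sending a finite group presentation $\langle S\mid R\rangle$ to a linear system $(A,b)$ such that the source group embeds into $\Gamma(A,b)$ and, for a designated word $w$, the group $\Gamma(A,b)$ admits a finite-dimensional representation with $J\mapsto -\mathrm{Id}$ precisely when $w\neq e$ in $\langle S\mid R\rangle$. This requires simulating arbitrary defining relations using only the rigid relation format available in solution groups (commutation within an equation and equation-parity relations), by introducing auxiliary variables and gadget equations while keeping tight control of the central element $J$ and of which representations are finite-dimensional. With the embedding in hand the reduction is immediate: composing with a suitable undecidable problem about finitely presented groups (the word problem, via Novikov--Boone, or triviality, via Adian--Rabin) yields a computable family of linear systems for which deciding the existence of a perfect quantum strategy would decide whether $w=e$, which is impossible.

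The main obstacle is squarely the embedding theorem. Both the faithful encoding of arbitrary relations inside the very restricted algebra of solution groups and, more subtly, the requirement that the correspondence be exact at the level of \emph{finite-dimensional} representations---so that infinite-dimensional or merely approximate operator solutions cannot spuriously manufacture a perfect strategy---demand delicate combinatorial group theory; verifying that $J\mapsto -\mathrm{Id}$ is finite-dimensionally realizable exactly when the targeted word is nontrivial is where essentially all the difficulty lies. The remaining ingredients---the definition of the game, the representation-theoretic dictionary, and the final diagonal reduction---are then comparatively routine.
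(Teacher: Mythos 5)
First, note that the paper offers no proof of this theorem: it is imported verbatim from Slofstra~\cite{MR3898717}, so your proposal has to be measured against Slofstra's actual argument. Your general framework --- linear system games, solution groups, and the Cleve--Liu--Slofstra dictionary --- is the right one, but the key lemma you isolate (your ``embedding theorem'') is stated with the wrong direction of correspondence, and in that form it provably cannot exist. You ask for a computable map $(\langle S\mid R\rangle, w)\mapsto (A,b)$ such that $\Gamma(A,b)$ has a finite-dimensional representation sending $J\mapsto -\mathrm{Id}$ \emph{precisely when} $w\neq e$. But the set of linear systems whose solution group admits such a representation --- equivalently, by the dictionary you quote, the set of games with a perfect finite-dimensional quantum strategy (and the paper's definition of quantum strategy is the finite-dimensional tensor-product one) --- is recursively enumerable: for each fixed pair of dimensions, existence of a perfect strategy is an instance of the existential theory of the reals, hence decidable, and one enumerates dimensions; this is exactly the argument the paper itself runs in the corollary following the theorem. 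If your map existed, then for a Novikov--Boone group the set $\{w : w\neq e\}$, which is not r.e., would be the computable preimage of an r.e.\ set, hence r.e., and the word problem would be decidable --- a contradiction. Group-theoretically the same obstruction reads: your lemma forces nontriviality of $w$ to be always witnessed by a finite-dimensional representation, a residual-finiteness-type property which for finitely presented groups already yields a decidable word problem.

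What your direction of the lemma does establish is the \emph{commuting-operator} analogue: a linear system game has a perfect commuting-operator strategy iff $J\neq e$ in $\Gamma(A,b)$, and Slofstra's embedding theorem realizes ``$w\neq e$ iff $J\neq e$'' computably, giving undecidability in that model. That, however, is a genuinely different theorem, and it neither implies nor is implied by the statement at hand. For the finite-dimensional statement the reduction must run in the opposite direction: yes-instances of an r.e.-complete problem (halting, or the positive side $w=e$ of a word problem) must be mapped to games that \emph{do} have perfect finite-dimensional strategies, and no-instances to games with no perfect finite-dimensional strategy at all (though possibly with perfect infinite-dimensional ones). Engineering the no-instances --- ensuring that every finite-dimensional representation of the solution group then sends $J\mapsto\mathrm{Id}$, which requires groups very far from being residually finite-dimensional --- is the actual core of~\cite{MR3898717}, and it is precisely the step that your plan, as written, cannot reach.
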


\begin{proposition}
    \label{Proposition: Non-local games with perfect quantum strategy reduce to teams with quantum realization}
    There is a many-one reduction from non-local games that have a perfect quantum strategy to teams that have a finite-dimensional tensor-product quantum-mechanical realization.
\end{proposition}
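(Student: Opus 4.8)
The plan is to send each non-local game to its ``team of winning configurations'' in the two-measurement setting ($n=2$), where the measurement variables $x_0,x_1$ record the two players' questions and the outcome variables $y_0,y_1$ their answers. Given $G=(I_\Alice,I_\Bob,O_\Alice,O_\Bob,V)$, I first dispose of a degenerate case: if some question pair $(c,d)$ admits no winning answer (no $(a,b)$ with $V(a,b\mid c,d)=1$), then $G$ has no perfect quantum strategy, and I send $G$ to a fixed empirical team $X_\bot$ that violates no-signalling; since the possibilistic collapse of any quantum-mechanical team supports no-signalling, $X_\bot$ has no quantum realization, so both sides of the desired equivalence fail. Otherwise I let $X_G$ be the empirical team with $\dom(X_G)=\{x_0,x_1,y_0,y_1\}$ consisting of exactly those assignments $s$ with $V(s(y_0),s(y_1)\mid s(x_0),s(x_1))=1$. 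This is computed from the finite data of $G$, so $G\mapsto X_G$ is computable, and by construction $M:=\{s(x_0x_1)\mid s\in X_G\}=I_\Alice\times I_\Bob$.

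For the direction from teams to strategies, suppose $X_G$ has a quantum realization $\X$, arising from a quantum system $(\hilbert_0\tensor\hilbert_1,(A_i^{a,b}),\rho)$ of type $(M,O)$. Setting $M^c_a:=A_0^{c,a}$ and $N^d_b:=A_1^{d,b}$ (extended by zero operators so as to be POVMs indexed by the full answer sets), the behaviour $p(a,b\mid c,d):=\trace(M^c_a\tensor N^d_b\,\rho)=p_{(c,d)}(a,b)$ is a quantum strategy, well-defined for every $(c,d)\in I_\Alice\times I_\Bob$ precisely because $M=I_\Alice\times I_\Bob$. Since $\support\X=X_G\subseteq\{V=1\}$, we have $p(a,b\mid c,d)>0$ only when $V(a,b\mid c,d)=1$; hence $p$ is perfect, so $G$ has a perfect quantum strategy.

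Conversely, from a perfect quantum strategy $p$ one forms the quantum-mechanical team $\X(s)=p(s(y_0),s(y_1)\mid s(x_0),s(x_1))/\abs{M}$, whose support is automatically contained in $X_G$. The crux is to make the support \emph{equal} $X_G$, i.e.\ to exhibit a perfect strategy that is strictly positive on every winning configuration. I would use convexity: a finite convex combination of quantum strategies is again a quantum strategy (take the direct sum of the Hilbert spaces with a block-diagonal state, implementing a classical mixing register), and a convex combination of perfect strategies is perfect; hence there is a single perfect strategy $\bar{p}$ whose support is the union of the supports of all perfect strategies. What remains, and this is the genuine difficulty, is to show that this maximal support is all of $X_G$ rather than a proper subset: a priori $V$ may declare ``winning'' a configuration that no perfect strategy ever produces.

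I expect this exact-support step to be the main obstacle, and it is not automatic for arbitrary decision predicates. It is, however, exactly what holds for the games to which the reduction is applied: in the linear-system games of Theorem~\ref{Theorem: It is undecidable to determine if a non-local game has a perfect quantum strategy}, the canonical operator-solution strategy is built from the tracial (maximally mixed) state, so for each question the outcome distribution is uniform over the satisfying answers and every winning configuration receives positive probability; thus $\support\bar{p}=X_G$ and $\X$ realizes $X_G$. Since reducing from this class already suffices to transfer undecidability (and recursive enumerability) to $\{X\mid X\models\QR\}$, this completes the argument. The reduction therefore hinges on pairing the winning-team construction with the full-support property of the source games, and the delicate point throughout is matching the possibilistic support exactly rather than merely up to inclusion.
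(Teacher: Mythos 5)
Your map $G\mapsto X_G$ (the team of all winning configurations, with a separate clause for games having a question pair that admits no winning answer) is exactly the paper's construction, and your teams-to-strategies direction is precisely the direction the paper omits as ``similar''; that part is correct. More interestingly, you have isolated the genuine crux: perfection of a strategy $p$ only yields $\support\X\subseteq X_G$ for the induced team $\X$, while the reduction needs equality. The paper's own proof in fact asserts ``$\X(s)>0$ if and only if $V(s(y_0),s(y_1)\mid s(x_0),s(x_1))=1$'' when only the ``only if'' half follows from perfection, so your diagnosis points at a step the paper glosses over.

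However, your repair of that step fails, so the gap remains and is fatal to this route. The convex-mixing argument is sound (block-diagonal direct sums do realize convex combinations of quantum strategies, and mixtures of perfect strategies are perfect), but it only produces a perfect strategy supported on the union of the supports of \emph{all} perfect strategies, and that union can be a proper subset of $X_G$ --- even for linear system games. The specific claim you lean on, that the canonical operator-solution strategy with the maximally entangled (tracial) state is uniform over the satisfying answers, is false: its outcome distribution is carried by the \emph{joint spectrum} of the commuting observables, with weights proportional to the ranks of the joint spectral projections, and the joint spectrum can omit satisfying assignments. Concretely, take the linear system $v_1=0$, $v_1+v_2=0$ over $\mathbb{Z}_2$. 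It is classically satisfiable, so the associated game has a perfect (classical, hence quantum) strategy; but every perfect quantum strategy is forced to be deterministic: perfection on the first equation forces Bob to answer $0$ on variable $v_1$ almost surely, no-signalling (which every tensor-product quantum strategy satisfies) makes Bob's answer independent of Alice's question, and then perfection forces Alice to answer $(0,0)$ on the second equation almost surely. Hence the winning configuration in which Alice assigns $(1,1)$ to the second equation and Bob agrees lies in $X_G$ but has probability zero under \emph{every} perfect quantum strategy. Consequently this $G$ has a perfect quantum strategy while $X_G$ has no quantum realization (a quantum realization of $X_G$ would be exactly a perfect strategy with full support on winning configurations), so $G\mapsto X_G$ is not a many-one reduction even when restricted to linear games. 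The full-support property is therefore not something you can cite for the games of Theorem~\ref{Theorem: It is undecidable to determine if a non-local game has a perfect quantum strategy}; it would have to be proved for a suitable (still undecidable) subclass, or the target team would have to be constructed differently. Note also that even if it did hold, a reduction defined only on linear games establishes a weaker statement than the proposition as written, though it would suffice for the undecidability corollary.
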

\begin{proof}
    Let $G$ be a game with question sets $I_\Alice$ and $I_\Bob$ and answer sets $O_\Alice$ and $O_\Bob$ and decision predicate $V$. We may assume that for each $c\in I_\Alice$ and $d\in I_\Bob$ there are some $a\in O_\Alice$ and $b\in O_\Bob$ such that $V(a,b \mid c,d)=1$, otherwise we may just map $G$ into the empty team. We let $X_G$ be the set of all assignments $s$ with domain $\{x_0,x_1,y_0,y_1\}$ such that $s(x_0)\in I_\Alice$, $s(x_1)\in I_\Bob$, $s(y_0)\in O_\Alice$, $s(y_1)\in O_\Bob$ and $V(s(y_0),s(y_1) \mid s(x_0),s(x_1))=1$. Let $M = I_\Alice\times I_\Bob$ and
    \begin{align*}
        O &= \{(a,b)\in O_\Alice\times O_\Bob \mid \text{$V(a,b \mid c,d)=1$ for some $c\in I_\Alice$ and $d\in I_\Bob$} \},
    \end{align*}
    and denote by $M_0$, $M_1$, $O_0$ and $O_1$ the appropriate projections of $M$ and $O$. Then $M=\{s(x_0x_1) \mid s\in X_G\}$ and $O=\{s(y_0y_1) \mid s\in X_G\}$.

    We show that $G$ has a perfect quantum strategy if and only if $X_G$ is realizable by a finite-dimensional tensor-product quantum-mechanical team. We only show one direction, the other is similar.
    Suppose that $p$ is a perfect quantum strategy for $G$. Then there are finite-dimensional Hilbert spaces $H_\Alice$ and $H_\Bob$, a quantum state $\rho$ of $H_\Alice\tensor H_\Bob$, a POVM $\{ M^c_a \mid a\in O_\Alice \}$ on $H_\Alice$ for each $c\in I_\Alice$ and a POVM $\{ N^d_b \mid b\in O_\Bob \}$ on $H_\Bob$ for each $d\in I_\Bob$ such that
    \[
        p(a,b\mid c,d)=\trace(M^c_a\tensor N^d_b\rho)
    \]
    for all $(a,b,c,d)\in O_\Alice\times O_\Bob\times I_\Alice\times I_\Bob$. We now define a quantum system
    \[
        \mathcal{S}=(\hilbert, (A^{c,a}_i)_{c\in M_i, a\in O_i, i<2},\rho)
    \]
    of type $(M,O)$ by setting
    \begin{itemize}
        \item $\hilbert = \hilbert_\Alice\tensor\hilbert_\Bob$, and
        \item $A^{c,a}_0 = M^c_a$ and $A^{d,b}_1 = N^d_b$ for all $a\in O_0$, $b\in O_1$, $c\in M_0$ and $d\in M_1$.
    \end{itemize}
    Now clearly
    \begin{align*}
        p^\mathcal{S}_{(c,d)}(a,b) &= \trace(A^{(c,d),(a,b)}\rho) = \trace(M^c_a\tensor N^d_b\rho) = p(a,b \mid c,d).
    \end{align*}
    As $p$ is a perfect strategy, we have $p(a,b \mid c,d)=0$ for any $a$,$b$,$c$ and $d$ such that $V(a,b \mid c,d)=0$. Thus the probabilistic team $\X$ arising from the quantum system $\mathcal{S}$ is such that $\X(s)>0$ if and only if $V(s(y_0),s(y_1) \mid s(x_0), s(x_1))=1$. Hence the possibilistic collapse of $\X$ is $X_G$, and thus $\X$ is a finite-dimensional tensor-product quantum-mechanical realization of $X_G$.
\end{proof}

\begin{corollary}
    The set $\{X \mid X\models\QR\}$ is undecidable but recursively enumerable.
\end{corollary}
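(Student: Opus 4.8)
The plan is to establish the two halves separately: undecidability from the reduction already in hand, and recursive enumerability from the decidability of the first-order theory of real closed fields.

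For undecidability, I would combine Proposition~\ref{Proposition: Non-local games with perfect quantum strategy reduce to teams with quantum realization} with Theorem~\ref{Theorem: It is undecidable to determine if a non-local game has a perfect quantum strategy}. The map $G\mapsto X_G$ is computable and satisfies that $G$ has a perfect quantum strategy if and only if $X_G\models\QR$; in particular this equivalence holds when $G$ ranges over \emph{linear} non-local games, for which the left-hand side is undecidable by Slofstra's theorem. Hence, if $\{X \mid X\models\QR\}$ were decidable, one could decide whether an arbitrary linear non-local game has a perfect quantum strategy by computing $X_G$ and testing membership, a contradiction. Therefore $\{X \mid X\models\QR\}$ is undecidable.

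For recursive enumerability, the key observation is that, once the dimensions of the Hilbert spaces are fixed, quantum realizability becomes a first-order condition over the reals. A finite team $X$ with variable domain $\Vm\cup\Vo$ determines the finite sets $M=\{s(\vec{x}) \mid s\in X\}$ and $O=\{s(\vec{y}) \mid s\in X\}$. For each tuple $(d_0,\dots,d_{n-1})\in\N^n$ I would write a formula over the ordered field $\R$ whose free variables are the real and imaginary parts of the entries of the operators $A_i^{a,b}$ (acting on a $d_i$-dimensional space, for $a\in M_i$, $b\in O_i$, $i<n$) and of a density operator $\rho$ on $\bigotimes_{i<n}\hilbert_i$. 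The formula asserts that each $\{A_i^{a,b} \mid b\in O_i\}$ is a POVM, that $\rho$ is a density operator, and that for every assignment $s$ with $s(\vec{x})\in M$ and $s(\vec{y})\in O$ one has $\trace(A^{s(\vec{x}),s(\vec{y})}\rho)>0$ exactly when $s\in X$. Positive semidefiniteness of a Hermitian matrix can be encoded polynomially (e.g. as the existence of a $B$ with $A=B^{\dagger}B$, or via nonnegativity of its principal minors), and the trace conditions are polynomial (in)equalities; since $M$ and $O$ are finite there are only finitely many such constraints, so the existential closure is a sentence of the theory of real closed fields, decidable by Tarski--Seidenberg. The enumeration procedure then searches over $k=1,2,3,\dots$, deciding the sentence for every dimension tuple with $\max_i d_i\le k$ and accepting once one holds: if $X\models\QR$ a realizing system exists in some finite dimension and is eventually found, while otherwise the search never halts.

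The main obstacle is the recursive enumerability direction, and specifically the care needed to render quantum realizability of a fixed dimension as a genuine first-order sentence over $\R$ — encoding complex entries as pairs of reals, expressing operator positivity polynomially, and verifying that the support constraints (strict positivity for $s\in X$, vanishing for the remaining $s$ with $s(\vec{x})\in M$ and $s(\vec{y})\in O$) are finitely many and hence captured in a single formula. Note finally that the undecidability established in the first part forces the absence of any computable bound on the dimension required when a realization exists; this is precisely why the dimension search yields only recursive enumerability and not outright decidability.
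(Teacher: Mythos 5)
Your proposal is correct and follows essentially the same route as the paper: undecidability by composing the computable reduction $G\mapsto X_G$ of Proposition~\ref{Proposition: Non-local games with perfect quantum strategy reduce to teams with quantum realization} with Slofstra's theorem, and recursive enumerability by expressing fixed-dimension quantum realizability in the (decidable) existential theory of the reals and dovetailing over dimensions, which is exactly the paper's argument (the paper cites Canny's PSPACE bound where you invoke Tarski--Seidenberg). The additional detail you supply --- encoding complex entries as pairs of reals, polynomial expression of positivity, and the remark that undecidability rules out any computable bound on the required dimension --- is a faithful elaboration of what the paper dismisses as ``not difficult to show.''
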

\begin{proof}
    Undecidability follows from Theorem~\ref{Theorem: It is undecidable to determine if a non-local game has a perfect quantum strategy} and Proposition~\ref{Proposition: Non-local games with perfect quantum strategy reduce to teams with quantum realization}.
    It is not difficult to show that the problem of determining whether a team has a probabilistic realization which corresponds to a quantum system of dimension $d$ is reducible to the existential theory of the reals, which is known to be in PSPACE~\cite{10.1145/62212.62257}. Hence one can check for each dimension $d$ whether a team has a quantum realization of dimension $d$, and thus we obtain an r.e.~algorithm.
\end{proof}

It is also possible to define wider notions of quantum realizability by dropping the  finite-dimensionality requirement.\footnote{Once finite-dimensionality is dropped, it is also relevant to replace the assumption of tensor product structure by weaker commuting operator assumptions \cite{MR3898717}.} One can then leverage the results in \cite{coladangelo2018unconditional,MR3898717,ji2022mipre}
to show that these lead to strictly larger classes of teams.

The teams we used in Section~\ref{subsec:relationships-between-the-properties} to prove the no-go theorems of quantum mechanics are all quantum realizable.
The following are essentially proved in~\cite{MR3038040}.

\begin{proposition}\quad
    \label{Proposition: No-go theorems come from quantum mechanics}
    \begin{enumerate}
        \item There is a finite-dimensional tensor-product quantum-mechanical team that realizes a GHZ team.

        \item There is a finite-dimensional tensor-product quantum-mechanical team that realizes a Hardy team.
    \end{enumerate}
\end{proposition}

\begin{corollary}\quad
    \label{Corollary: Probabilistic EPR}
        There is a finite-dimensional tensor-product quantum-mechanical team which is not realized by any probabilistic hidden-variable team supporting probabilistic $\vz$-independ\-ence and probabilistic locality; hence
        \[
            \QR \not\models \exists\vec{z} \left( \vec{z}\probind\vec{x} \land \varphi \land \psi \right),
        \]
        where
        \begin{align*}
            \varphi &= \bigwedge_{I\subseteq n} \left( \{x_i \mid i\notin I\}\probind_{\{x_i \mid i\in I\}\vec{z}}\{y_i\mid i\in I\} \right) \text{ and} \\
            \psi &= \bigwedge_{i<n}\left( y_i\probind_{\vec{x}\vec{z}}\{y_j \mid j\neq i\} \right).
        \end{align*}
\end{corollary}
\begin{proof}
    This follows by combining Propositions~\ref{Proposition: No-go theorems come from quantum mechanics} and~\ref{Proposition: Probabilistic realization implies possibilistic realization}, Corollary~\ref{Corollary: Probabilistic implies possibilistic} and Propositions~\ref{Proposition: GHZ} and~\ref{Proposition: Hardy}.
\end{proof}

It is shown in~\cite{ABRAMSKY20163} that every finite-dimensional tensor-product quantum-mechanical team which does not arise from a system whose state is merely a tensor product of 1-qubit states and maximally entangled 2-qubit states admits a Hardy-style proof of non-locality.

\section{Open Questions}
Questions left open include the following.
\begin{itemize}
    \item Do the concepts of downwards closedness and strong downwards closedness in probabilistic team semantics coincide?
    \item What properties commonly found in team-based logics, such as downwards closedness, do the operations $\PR$ and $\QR$ have?
    \item Does it make sense to think of $\PR$ as a ``modal'' operator? If yes, what axioms does it satisfy? How about $\QR$?
    \item Is the property of a probabilistic team being finite-dimensional tensor-product quantum-mechanical 
    definable in $\mathrm{ESOf}_{\R}$ or some similar logic?
    We can ask similar questions for the broader notions obtained by dropping finite dimensionality requirements.
    \item Is there a more general theorem behind Proposition~\ref{Proposition: Measurement locality + z-independence + locality implies the probabilistic versions}? Is there a formal reason why $\varphi\models\PR\varphi$ holds there while in Proposition~\ref{Proposition: Possibilistic no-signalling does not imply probabilistic no-signalling} it fails?
    \item Can the sheaf-theoretic framework of~\cite{Abramsky_2011} be translated to the language of team semantics in some reasonably satisfactory manner, allowing us to inspect more dependence and independence properties such as non-contextuali\-ty in terms of (a variant of) independence logic?
\end{itemize}

\bibliographystyle{plain}
\bibliography{main}

\end{document}